\newcommand{\comm}[1]{}
\numberwithin{equation}{section}
\numberwithin{equation}{section}
\theoremstyle{plain}
\newtheorem{thm}{Theorem}[section]
\newtheorem{lemma}[thm]{Lemma}
\newtheorem{prop}[thm]{Proposition}
\newtheorem{cor}[thm]{Corollary}
\newtheorem*{claims}{Claim}
\theoremstyle{definition}
\newtheorem{defi}[thm]{Definition}
\newtheorem{rem}[thm]{Remark}
\newtheorem{rems}[thm]{Remarks}
\newtheorem{nota}[thm]{Notation}
\newtheorem*{step1}{Step 1}
\newtheorem*{step2}{Step 2}
\newtheorem*{step3}{Step 3}
\newcommand{\N}{\mathbb{N}}
\newcommand{\Z}{\mathbb{Z}}
\newcommand{\R}{\mathbb{R}}
\newcommand{\C}{\mathbb{C}}
\newcommand{\T}{\mathbb{T}}
\newcommand{\D}{\mathbb{D}}
\newcommand{\CA}{\mathcal{A}}
\newcommand{\CE}{\mathcal{E}}
\newcommand{\CH}{\mathcal{H}}
\newcommand{\FD}{\mathfrak{D}}
\newcommand{\al}{\alpha}
\newcommand{\be}{\beta}
\newcommand{\ga}{\gamma}
\newcommand{\de}{\delta}
\newcommand{\la}{\lambda}
\newcommand{\ep}{\varepsilon}
\newcommand{\si}{\sigma}
\newcommand{\La}{\Lambda}
\newcommand{\ka}{\varkappa}
\newcommand{\sm}{\setminus}
\renewcommand{\ss}{\subset}
\newcommand{\tn}[1]{\textnormal{#1}}
\newcommand{\ol}[1]{\overline{#1}}
\newcommand{\abs}[1]{| #1 |}
\newcommand{\Abs}[1]{\left| #1 \right|}
\newcommand{\norm}[1]{\left\lVert #1 \right\rVert}
\newcommand{\Norm}[1]{{\vert\kern-0.25ex\vert\kern-0.25ex\vert #1 \vert\kern-0.25ex\vert\kern-0.25ex\vert}}
\newcommand{\ip}[2]{\left\langle #1 , #2 \right\rangle}
\newcommand{\wt}[1]{\widetilde{#1}}
\newcommand{\wh}[1]{\widehat{#1}}
\newcommand{\AW}{A_{W,\R}}
\newcommand{\AWC}{A_{W}}
\newcommand{\op}{\wh{T}}
\newcommand{\func}{\gamma}
\newcommand{\weak}[1]{\operatorname{Adm}_{#1}^{w}}
\newcommand{\adm}[1]{\operatorname{Adm}_{#1}}
\date{\today}
\title{Functional models up to similarity and $a$-contractions}
\author{Luciano Abadias}
\address{Luciano Abadias \newline
Departamento de Matem\'aticas, \newline
Instituto Universitario de
Matem\'aticas y Aplicaciones, \newline
Universidad de Zaragoza, \newline
50009 Zaragoza, Spain}
\email{labadias@unizar.es}
\author{Glenier Bello}
\address{Glenier Bello\newline
Departamento de Matem\'aticas,\newline
Universidad Aut\'onoma de Madrid,\newline
Cantoblanco, 28049 Madrid, Spain,\newline
and Instituto de Ciencias Matem\'aticas (CSIC-UAM-UC3M-UCM)}
\email{glenier.bello@uam.es}
\author{Dmitry Yakubovich}
\address{D. V. Yakubovich\newline
Departamento de Matem\'aticas,\newline
Universidad Aut\'onoma de Madrid,\newline
Cantoblanco, 28049 Madrid, Spain\newline
and Instituto de Ciencias Matem\'aticas (CSIC-UAM-UC3M-UCM)}
\email {dmitry.yakubovich@uam.es}
\begin{document}

\begin{abstract}
We study the generalization of $m$-isometries and $m$-contractions
(for positive integers $m$) to what we call $a$-isometries and $a$-contractions
for positive real numbers $a$.
We show that operators satisfying certain inequality in hereditary form
are similar to $a$-contractions. This improvement of \cite[Theorem~I]{BY18}
is based on some Banach algebras techniques.
The study of $a$-isometries and $a$-contractions relies on
properties of finite differences.
\end{abstract}

\keywords{dilation; functional model;
operator inequality; ergodic theorem}

\maketitle

\section{Introduction}
Let $\al(t)$ be a function representable by power series
$\sum_{n=0}^{\infty} \al_n t^n$
in the unit disc $\D := \{ \abs{t} < 1 \}$.
If $T \in L(H)$ is a bounded linear operator on a separable Hilbert space $H$,
we put
\begin{equation}
\label{ser-alpha}
\al(T^*,T) := \sum_{n=0}^{\infty} \al_n T^{*n} T^n
\end{equation}
whenever this series converges in the strong operator
topology SOT in $L(H)$.

Since Sz.-Nagy and Foia\c{s} developed their spectral theory for contractions
(see \cite{NFBK10}) based on the construction of a functional model,
an intensive research has been done on obtaining a functional model for operators
$T$ such that $\al(T^*,T) \ge 0$ for distinct types of functions $\al$.
Note that when $\al(t) = 1-t$ the operator
inequality $\al(T^*,T) \ge 0$ means that $T$ is a contraction.
The usual approach, which goes back to Agler (see \cite{Agl82}), 
is based on the assumption that the function
$k(t) := 1/\al(t)$ defines a reproducing kernel Hilbert space.
In particular, it is assumed that $\al$ does not vanish on the unit disc $\D$.
We refer the reader to the introduction of our recent paper
\cite{ABY19_1} and the references therein for more details. 

In \cite{BY18}, the last two authors consider functions $\al$ in the
Wiener algebra $A_W$ of analytic functions in the unit disc $\D$
with summable sequence of Taylor coefficients.
In that paper, the so called \emph{admissible functions} $\al$
have the form $\al(t) = (1-t) \wt{\al}(t)$, where $\wt{\al}$ belongs to
$A_W$, has real Taylor coefficients, and is positive on the interval $[0,1]$.
Therefore the admissible functions $\al$ may have zeroes in $\D \sm [0,1)$.
In \cite[Theorem~I]{BY18} it was proved that 
whenever $\al(T^*,T) \ge 0$, $T$ is similar to a Hilbert space contraction.
In Theorem~\ref{thm quitar alpha tilde} we generalize this result.

This paper should be seen as a second part of \cite{ABY19_1},
where we focused on unitarily equivalent models for operators $T$
satisfying $\al(T^*,T) \ge 0$ for certain functions $\al$.
We tried to make our exposition independent of 	
\cite{ABY19_1}. 

We need to introduce some notation.
Given a function $\al$ in the Wiener algebra $A_W$
and an operator $T$ in $L(H)$ with spectrum $\si(T) \ss \ol{\D}$,
if the series
$\sum \abs{\al_n} T^{*n} T^n$
converges in SOT, then we say that
$\al \in \CA_T$. According to the notation of \cite{ABY19_1}, this means precisely
that $T \in \weak{\al}$.
Depending on whether we want to consider fixed the operator $T$ or
the function $\al$, we will use one notation or the other.
In the same way,
if the series
$\sum \abs{\al_n} T^{*n} T^n$
converges in the uniform operator topology on $L(H)$, then we say that
$\al \in \CA_T^0$, or equivalently $T \in \adm{\al}$.
Hence,
\[
\CA_T^0 \ss \CA_T \ss A_W,
\qquad \tn{ and } \qquad
\adm{\al} \ss \weak{\al} \ss L(H).
\]
We will see later that $\CA_T$ is a Banach algebra
(the norm is given by \eqref{eq defi norm in AT})
and $\CA_T^0$ is
a separable closed subalgebra of $\CA_T$.

We denote by $\CA_{T,\R}^0$, $\CA_{T,\R}$, and $\AW$ the subsets of functions
in $\CA_{T}^0$, $\CA_{T}$, and $A_W$,  respectively, whose Taylor coefficients are real.
It turns out (see Proposition~\ref{prop fnTBT SOT})
that if $\al \in \CA_{T,\R}$, then the operator
$\al(T^*,T)$ is well defined (that is, the series \eqref{ser-alpha}
converges in SOT).

\begin{thm}\label{thm quitar alpha tilde}
Let $\op$ be an operator in $L(H)$ with spectrum $\si(\op) \ss \ol{\D}$,
and let $\ga$ be a function of the form
\[
\ga(t) = \al(t) \wt{\ga}(t),
\]
where $\al \in \CA_{\op,\R}$, and $\wt{\ga} \in \CA_{\op,\R}^0$ is positive on the
interval $[0,1]$. If $\ga(\op^*,\op) \ge 0$,
then $\op$ is similar to an operator $T \in L(H)$ 
such that $\al(T^*,T) \ge 0$.
\end{thm}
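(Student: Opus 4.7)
The plan is to reduce the theorem to constructing a positive invertible $Y \in L(H)$ satisfying the weighted inequality
\[
\sum_{n} \al_n \, \op^{*n} Y \op^n \ge 0,
\]
with convergence in the strong operator topology. Once such $Y$ is available, setting $X := Y^{1/2}$ and $T := X \op X^{-1}$ gives $T^{*n}T^n = X^{-1}\op^{*n} Y \op^n X^{-1}$, and hence
\[
\al(T^*,T) = X^{-1}\Big(\sum_{n} \al_n \op^{*n} Y \op^n\Big) X^{-1} \ge 0,
\]
which is exactly the desired conclusion.

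The natural candidate for $Y$ is $\wt{\ga}(\op^*, \op) = \sum_{m} \wt{\ga}_m \op^{*m} \op^m$. This is a well-defined bounded self-adjoint operator because $\wt\ga \in \CA_{\op,\R}^0$ forces norm convergence of the defining series and the coefficients of $\wt\ga$ are real. A direct Cauchy-product expansion of $\ga = \al\wt\ga$, together with an interchange of summations legitimized by the norm convergence of the $\wt\ga$-series, yields the key identity
\[
\sum_{n} \al_n \op^{*n} \wt{\ga}(\op^*,\op) \op^n = \ga(\op^*, \op) \ge 0,
\]
so this choice of $Y$ makes the weighted inequality automatic from the hypothesis.

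What remains, and is the main obstacle, is to show that $Y = \wt{\ga}(\op^*, \op)$ is positive and invertible, not merely self-adjoint. The naive idea of factoring $\wt\ga$ as a square in $\CA_{\op,\R}^0$ does not transfer to the operator level, because the hereditary map $\be \mapsto \be(\op^*, \op)$ is linear but not multiplicative. The plan is to exploit the commutative Banach-algebra structure of $\CA_{\op,\R}^0$ announced in the abstract: identify its maximal ideal space, use $\si(\op) \ss \ol{\D}$ together with $\wt\ga > 0$ on $[0,1]$ to show that the spectrum of $\wt\ga$ inside $\CA_{\op,\R}^0$ is contained in $(0,\infty)$, and then transfer this algebraic positivity to the operator $Y$. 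The transfer step is delicate and could be handled either by approximating $\wt\ga$ in the algebra by power series with non-negative coefficients (for which positivity of the hereditary calculus is manifest) or by a suitable ergodic-type averaging argument. Identifying the maximal ideal space of $\CA_{\op,\R}^0$ and executing the positivity-transfer are, I expect, the hardest parts of the proof.
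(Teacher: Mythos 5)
Your overall frame is the right one: reduce to producing a positive invertible $Y\in L(H)$ with $\sum_n\al_n\,\op^{*n}Y\op^n\ge 0$ and then conjugate by $Y^{1/2}$; this is exactly how the paper concludes, and the composition identity you invoke is its Lemma~\ref{Lemma 24 CAOT}. However, the step you label as ``the main obstacle'' is not merely delicate --- for your choice $Y=\wt{\ga}(\op^*,\op)$ it is false. First, positivity of $\wt{\ga}$ on $[0,1]$ does not place the Gelfand spectrum of $\wt{\ga}$ in $(0,\infty)$: the characters of $\CA_{\op}^{0}$ are the evaluations at \emph{all} points of $\ol{\D}$, and the hypotheses explicitly allow $\wt{\ga}$ to vanish or change sign on $\ol{\D}\sm[0,1]$ (e.g.\ $\wt{\ga}(t)=t^2+1/4$ vanishes at $t=i/2$); handling such zeros is precisely the point of the theorem. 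Second, even when $\wt{\ga}>0$ on all of $\ol{\D}$, the hereditary calculus does not transfer this positivity: for $\wt{\ga}(t)=1-ct$ with $0<c<1$ one has $\wt{\ga}(\op^*,\op)=I-c\,\op^*\op$, which fails to be positive as soon as $\norm{\op}>c^{-1/2}$, and $\si(\op)\ss\ol{\D}$ is compatible with arbitrarily large norm. Neither of your proposed rescues can work: the map $\be\mapsto\be(\op^*,\op)$ is not multiplicative, so spectral positivity in the algebra proves nothing about the operator, and a function with a strictly negative Taylor coefficient (such as $1-ct$) cannot be approximated in $\CA_{\op}^{0}$ by series with non-negative coefficients, since the coefficient functionals are continuous.

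The missing idea is to change $Y$, not to fight for the positivity of $\wt{\ga}(\op^*,\op)$. The paper proves (Theorem~\ref{thm analog key lemma}, resting on the Bernstein--P\'olya-type Lemma~\ref{Lema_Berns_Polya} for polynomials positive on $[0,1]$, plus invertibility in the Banach algebra $\CA_{\op}^{0}$ via Corollary~\ref{cor inverse function in AT0}) that there exists $g\in\CA_{\op,\R}^{0}$ with $g\succ 0$ and $h:=\wt{\ga}g\succ 0$, i.e.\ both have non-negative Taylor coefficients and positive constant term. Taking $Y:=h(\op^*,\op)$, positivity and invertibility are now trivial, since $Y=h_0I+\sum_{n\ge1}h_n\op^{*n}\op^n\ge h_0I>0$; and from $\ga g=\al h$ together with Lemma~\ref{Lemma 24 CAOT} one gets $\sum_n\al_n\op^{*n}Y\op^n=\sum_n g_n\op^{*n}\ga(\op^*,\op)\op^n\ge 0$, each summand being non-negative because $g_n\ge0$ and $\ga(\op^*,\op)\ge0$. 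This coefficientwise positivization of $\wt{\ga}$ is where the real work of the proof lies, and it is absent from your proposal.
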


In other words, this theorem permits one to leave out (in terms of similarity) 
the factor $\wt{\func}$.
Therefore, the question of obtaining a model for $\op$ up to similarity
reduces to obtaining a model for $T$ (as above).
An important case is $\al(t) := (1-t)^a$ for $a>0$.
If $(1-t)^a(T^*,T) \ge 0$, then we say that $T$ is an \emph{$a$-contraction},
and if $(1-t)^a(T^*,T) = 0$, we say that $T$ is an \emph{$a$-isometry}.

The next result is an obvious consequence of
Theorem~\ref{thm quitar alpha tilde}.

\begin{cor}\label{cor particular eta}
Let $\op$ be an operator in $L(H)$ with spectrum $\si(\op) \ss \ol{\D}$.
Suppose that $\func$ has the form
\[
\func(t) = (1-t)^a \, \wt{\ga}(t),
\]
for some $a>0$,
where $(1-t)^a \in \CA_{\op,\R}$, and $\wt{\ga} \in \CA_{\op,\R}^0$ is positive on the
interval $[0,1]$. If $\ga(\op^*,\op) \ge 0$, 
then $\op$ is similar to an $a$-contraction.
\end{cor}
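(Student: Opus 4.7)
The corollary is a direct specialization of Theorem~\ref{thm quitar alpha tilde}: choosing $\al(t) := (1-t)^a$ makes the hypotheses match, and the conclusion $\al(T^*,T) \ge 0$ is by definition the statement that $T$ is an $a$-contraction. Hence the substance lies in a proof plan for Theorem~\ref{thm quitar alpha tilde}, which I now sketch.

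The strategy is to exhibit the similarity explicitly. Set $A := \wt{\ga}(\op^*,\op)$; since $\wt{\ga} \in \CA_{\op,\R}^0$ has real Taylor coefficients and each $\op^{*n}\op^n$ is self-adjoint, $A$ is bounded and self-adjoint. Assume provisionally that $A \ge c I$ for some $c>0$, put $J := A^{1/2}$, and define $T := J\,\op\, J^{-1}$. The central algebraic step is the hereditary factorization
\[
\ga(\op^*,\op) \;=\; \sum_{n\ge 0} \al_n\, \op^{*n}\, \wt{\ga}(\op^*,\op)\, \op^n,
\]
which follows from $\ga = \al \cdot \wt{\ga}$ by re-indexing the double series; the uniform convergence guaranteed by $\wt{\ga} \in \CA_{\op,\R}^0$ (rather than merely $\CA_{\op,\R}$) is what legitimises the interchange of summations. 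Using $T^{*n}T^n = J^{-1}\op^{*n} A\, \op^n J^{-1}$, which relies on $J = J^*$, one then obtains
\[
\al(T^*,T) \;=\; J^{-1}\,\ga(\op^*,\op)\,J^{-1} \;\ge\; 0,
\]
so that the operator $T$, similar to $\op$, satisfies the conclusion.

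The hard part---and what makes this theorem a genuine improvement over \cite[Theorem~I]{BY18}---is establishing the positive lower bound $A \ge cI$. Positivity of $\wt{\ga}$ on $[0,1]$ is \emph{not} sufficient by itself: simple nilpotent examples with large $\|\op\|$ show that $\wt{\ga}(\op^*,\op)$ need not even be positive in the absence of the standing inequality $\ga(\op^*,\op) \ge 0$. Thus the proof must combine three ingredients: (i) the hypothesis $\ga(\op^*,\op) \ge 0$; (ii) the positivity of $\wt{\ga}$ on $[0,1]$; and (iii) the commutative Banach algebra structure of $\CA_{\op}^0$ announced in the abstract. My plan is to analyse $\wt{\ga}$ inside $\CA_{\op,\R}^0$, producing either a positive invertible analytic square root or an inverse via a Wiener-type $1/f$ theorem available in this subalgebra, and then transfer the resulting invertibility to the operator $A$ by a Neumann-series argument that crucially invokes $\ga(\op^*,\op) \ge 0$. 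I expect essentially all of the technical work to lie in this Banach-algebra step; once it is carried out, the similarity construction above delivers the theorem, and the corollary follows immediately by specialising $\al(t) = (1-t)^a$.
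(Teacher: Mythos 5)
Your reduction of the corollary to Theorem~\ref{thm quitar alpha tilde} by taking $\al(t)=(1-t)^a$ is exactly what the paper does, and your algebraic skeleton for the theorem (conjugate by the square root of a positive invertible operator, use the hereditary factorization of Lemma~\ref{Lemma 24 CAOT}) is the right shape. The gap is in the one step you yourself flag as the hard part: establishing $A=\wt{\ga}(\op^*,\op)\ge cI$. The route you propose cannot work as described. First, $\wt{\ga}$ is only assumed positive on $[0,1]$; it may vanish on $\ol{\D}\setminus[0,1]$, and since the characters of $\CA_{\op}^{0}$ are precisely the evaluations at points of $\ol{\D}$, no Wiener-type $1/f$ inversion of $\wt{\ga}$ inside the algebra is available. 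Second, even if $\wt{\ga}$ were invertible in $\CA_{\op}^{0}$, the hereditary calculus $f\mapsto f(\op^*,\op)$ is not multiplicative at the operator level (it is only ``multiplicative'' in the composed sense of Lemma~\ref{Lemma 24 CAOT}), so invertibility of the function would not transfer to positivity or invertibility of the operator $\wt{\ga}(\op^*,\op)$. Third, no mechanism is offered by which the hypothesis $\ga(\op^*,\op)\ge 0$ would force a lower bound on $\wt{\ga}(\op^*,\op)$, and the paper never proves (or needs) such a bound.

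The paper sidesteps this entirely by choosing a different positive operator. Theorem~\ref{thm analog key lemma} (whose engine is the Bernstein--P\'olya-type Lemma~\ref{Lema_Berns_Polya} plus the Gelfand theory of $\CA_{\op}^{0}$, applied not to $\wt{\ga}$ itself but to an auxiliary function that does not vanish on $\ol{\D}$) produces $g\in\CA_{\op,\R}^{0}$ with $g\succ 0$ and $h:=g\,\wt{\ga}\succ 0$, i.e.\ \emph{coefficientwise} nonnegativity with positive constant term. This buys both missing ingredients at once: $h(\op^*,\op)=\sum_n h_n\op^{*n}\op^n\ge h_0 I>0$ trivially, because every summand is nonnegative; and, writing $\ga g=h\al$,
\[
\sum_{n=0}^{\infty}\al_n\,\op^{*n}h(\op^*,\op)\,\op^n
=\sum_{n=0}^{\infty}g_n\,\op^{*n}\ga(\op^*,\op)\,\op^n\ \ge\ 0,
\]
where the nonnegativity of the $g_n$ is what lets the hypothesis $\ga(\op^*,\op)\ge 0$ pass through the sum. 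Setting $B^2:=h(\op^*,\op)$ and $T:=B\op B^{-1}$ then gives $\al(T^*,T)\ge 0$. So the idea you are missing is to replace $\wt{\ga}$ by a cofactor multiple whose Taylor coefficients are nonnegative; without it, your plan stalls at the unproved (and, for your chosen $A$, unnecessary) lower bound.
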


The case $a=1$ in the above corollary corresponds to \cite[Theorem~I]{BY18}. 
However, in that theorem we assumed the convergence of
the series $\sum_n|\wt{\func}_n|\|T^n\|^2$, which implies 
the convergence of the series~\eqref{ser-alpha} 
in the uniform operator topology.

The consideration of the strong operator topology here is not a mere generalization,
but it turns out to be the appropriate topology for this setting.
Indeed, using this topology, 
the results of \cite{ABY19_1} about the existence of a model
for operators $T$ satisfying that $\al(T^*,T) \ge 0$ 
are characterizations (i.e., ``if and only if'' statements).
See for instance Theorem~\ref{thm model a in 01} below.

We say that a function $\al(t) = \sum_{n=0}^{\infty} \al_n t^n$ is 
of \emph{Nevanlinna-Pick type} 
if $\al_0 = 1$ and $\al_n \le 0$ for $n \ge 1$. 
Alternatively, in this case $(1/\al)(t)$ is called a \emph{Nevanlinna-Pick kernel}. 
Whenever $k(t) = \sum_{n=0}^{\infty} k_n t^n$ has 
positive Taylor coefficients $k_n$, we denote by $\CH_k$
the weighted Hilbert space of power series
$f(t)=\sum_{n=0}^\infty f_n t^n$ with finite norm
\begin{equation}\label{eq defi space Ha}
\|f\|_{\CH_k} := \bigg(\sum_{n=0}^\infty  |f_n|^2 k_n \bigg)^{1/2}.
\end{equation}
Let $B_k$ be the
\emph{backward shift} on $\CH_k$, defined by
\begin{equation}
\label{def-B-k}
B_k f(t)= \dfrac{f(t) - f(0)}{t}.
\end{equation}
If $a>0$ and $k(t) = (1-t)^{-a}$, we denote the space $\CH_k$ 
by $\CH_a$, and the backward shift $B_k$ by $B_a$
in order to emphasize the dependence on $a$. 

It is known since Agler~\cite{Agl82} that under certain additional conditions, 
the inequalities $\al(t)\ne 0$ for $|t|<1$ and $\al(T^*, T) \ge 0$ imply 
the existence of a certain \textit{unitarily equivalent model} of $T$. 
As a consequence, in this situation, $\op$ 
(as in Theorem~\ref{thm quitar alpha tilde}) 
has a model up to similarity. 

In particular, in \cite[Theorem~1.3]{CH18}, Clou\^{a}tre and Hartz showed
the following result.
Let $\al$ be a function of Nevanlinna-Pick type. 
Suppose that
$k(t) := 1/\al(t)$ has radius of convergence $1$, its Taylor
coefficients $k_n$ are positive and satisfy $k_n / k_{n+1}\to 1$ as $n\to \infty$. 
Then $\al(T^*, T) \ge 0$ if and only if
$T$ is unitarily equivalent to a part of an operator of
the form $(B_k \otimes I_\CE) \oplus S$,
where $I_\CE$ is the identity operator on a Hilbert space $\CE$  and
$S$ is an isometry on another auxiliary Hilbert space.
By \emph{a part of an operator} we mean its restriction to an invariant subspace.
In fact, the result by Clou\^{a}tre and Hartz applies to tuples of commuting operators.  

In \cite[Theorem 1.5]{ABY19_1}, 
we complement this result by obtaining other family of functions $\al$, 
which includes non-Nevanlinna-Pick cases, so that 
all operators $T \in L(H)$ satisfying $\al(T^*, T) \ge 0$
are modeled by parts of operators of the form $B_k \otimes I_\CE$.
Moreover, we give explicit models (that is, give an explicit space $\CE$ and an explicit
isometry $S$) based on the \emph{defect operator} $D$ and the \emph{defect space} $\FD$,
given by
\begin{equation}\label{defi D and FD}
D : H \to H, \quad D := (\al(T^*,T))^{1/2},
\qquad
\FD := \ol{DH}.
\end{equation}

In \cite{ABY19_1}, we also discuss the uniqueness of this model. 
This depends on
whether $\al(1) = 0$ or $\al(1) > 0$.

The function $\al(t)=(1-t)^a$, with $a>0$, 
is of Nevanlinna-Pick type if and only if $0<a<1$.
In this case, we obtain the following fact.

\begin{thm}\label{thm model a in 01}
If $0<a<1$, then the following statements are equivalent.
\begin{enumerate}[\rm (i)]
\item
$T$ is an $a$-contraction.
\item
There exists a separable Hilbert space $\CE$ such that
$T$ is unitarily equivalent to a part of an operator $(B_a \otimes I_\CE)\oplus S$,
where $S$ is a Hilbert space isometry.
\end{enumerate}
Moreover, if \tn{(ii)} holds, then one can take for $\CE$ the space $\FD$.
\end{thm}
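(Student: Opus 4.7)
The plan is to read off Theorem~\ref{thm model a in 01} from the Clou\^{a}tre--Hartz result \cite[Theorem~1.3]{CH18} recalled in the introduction, applied to the specific function $\al(t):=(1-t)^a$ with $0<a<1$; the more refined ``moreover'' claim is then extracted from the explicit construction in \cite[Theorem~1.5]{ABY19_1}.

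The first step is to check the hypotheses of \cite[Theorem~1.3]{CH18}. Expanding by the generalized binomial theorem,
\[
\al(t)=(1-t)^a=1+\sum_{n=1}^{\infty}(-1)^n\binom{a}{n}\, t^n,
\qquad
(-1)^n\binom{a}{n}=-\frac{a(1-a)(2-a)\cdots(n-1-a)}{n!}\le 0\quad(n\ge 1),
\]
where the non-positivity uses $0<a<1$, so every factor in the numerator on the right is non-negative; hence $\al_0=1$ and $\al_n\le 0$ for $n\ge 1$, i.e. $\al$ is of Nevanlinna--Pick type. Its reciprocal
\[
k(t):=1/\al(t)=(1-t)^{-a}=\sum_{n=0}^{\infty}\binom{n+a-1}{n}\, t^n
\]
has positive Taylor coefficients $k_n=\binom{n+a-1}{n}$, radius of convergence~$1$, and $k_n/k_{n+1}=(n+1)/(n+a)\to 1$ as $n\to\infty$. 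The cited theorem then yields the equivalence of \tn{(i)} and \tn{(ii)} for some separable Hilbert space $\CE$. In the easy direction \tn{(ii)}$\Rightarrow$\tn{(i)} one also uses that $\al(S^*,S)=\al(1)\,I=0$ for any isometry $S$, together with the fact that Nevanlinna--Pick positivity passes to parts.

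For the ``moreover'' clause, $\CE$ must be identified with $\FD=\ol{DH}$, where $D=\al(T^*,T)^{1/2}$. I would obtain this from \cite[Theorem~1.5]{ABY19_1}: the function $\al(t)=(1-t)^a$ with $0<a<1$ lies in the family of admissible functions treated there, and the explicit model built in that paper realizes $T$ as a part of $(B_a\otimes I_{\FD})\oplus S$ in which $S$ is an isometry constructed from $D$. The main obstacle really lies here, rather than in checking the Clou\^{a}tre--Hartz hypotheses: their argument is purely existential, so pinning the multiplicity space down to $\FD$ requires the more refined defect-operator construction developed in \cite{ABY19_1}.
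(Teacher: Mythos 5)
Your argument is correct in outline, and indeed it is precisely the route that the paper itself acknowledges immediately after the statement of Theorem~\ref{thm model a in 01} (``This result can be obtained using \cite[Theorem~1.3]{CH18} and the explicit model obtained in \cite{ABY19_1}''); your verification of the Clou\^atre--Hartz hypotheses for $\al(t)=(1-t)^a$, $0<a<1$ (Nevanlinna--Pick type, $k_n=\binom{n+a-1}{n}>0$, $k_n/k_{n+1}=(n+1)/(n+a)\to 1$) is accurate, as is the observation that the ``moreover'' clause needs the defect-space construction of \cite{ABY19_1} rather than the purely existential statement of \cite{CH18}. However, this is deliberately \emph{not} the proof the paper gives: the whole point of Section~\ref{Alternative proof in the Nevannlina-Pick case} is to avoid the $C^*$-algebraic representation theory of multiplier algebras underlying \cite{CH18}. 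The paper instead argues directly: it defines a semi-inner product $[x,y]=\lim_{N}\sum_{m>N}\ip{T^mx}{T^my}\rho_{N,m}$ on $H$ (with $\rho_{N,m}>0$ coming from the sign pattern of the coefficients of $(1-t)^a$ and $(1-t)^{-a}$), passes to the quotient Hilbert space $\wh H$, defines $S\wh x:=\wh{Tx}$, and proves $S$ is an isometry via the key uniform estimate $\abs{u_r(m)-u_r(m-1)}\le C m^{-1-a}$ for the coefficients of $(1-t)^{a+1}/(1-rt)^a$, established through Besov/Hardy space techniques; the intertwining isometry $Gx=(\{DT^nx\}_n,\wh x)$ then exhibits the model with multiplicity space exactly $\FD$, so the identification of $\CE$ comes for free rather than being grafted on afterwards. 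What your route buys is brevity and the tuple-friendly generality of \cite{CH18}; what the paper's route buys is a self-contained, constructive proof with an explicit $S$ and $\CE=\FD$. One small caution about your version: as stated in the introduction, \cite[Theorem~1.5]{ABY19_1} concerns a family of functions ``which includes non-Nevanlinna-Pick cases,'' so you should verify that $(1-t)^a$ with $0<a<1$ actually falls under its hypotheses before leaning on it for the ``moreover'' clause; the authors' own remark suggests the two references must be combined rather than either one sufficing alone.
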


This result can be obtained using \cite[Theorem~1.3]{CH18}
and the explicit model obtained in \cite{ABY19_1}.
The result of Clou\^{a}tre and Hartz relies on the study of 
reproducing kernel Hilbert spaces through the
representation theory of their algebras of multipliers.
Instead of this method involving $C^*$-algebras,
here we give a direct proof of Theorem~\ref{thm model a in 01}
using approximation in Besov spaces.

\begin{cor}\label{model-up-to-sim}
Suppose that $\op$ satisfies the hypotheses of Corollary~\ref{cor particular eta}, 
where $0<a<1$. 
Then $\op$ is similar to a part of an operator of the form $(B_a \otimes I_\FD)\oplus S$,
where $S$ is a Hilbert space isometry.
\end{cor}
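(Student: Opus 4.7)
The plan is to derive this corollary by chaining together two previously stated results in the paper, so no substantively new work is needed. First I invoke Corollary~\ref{cor particular eta} applied to $\op$: under the hypothesis $\ga(\op^*,\op) \ge 0$ it produces an $a$-contraction $T \in L(H)$ together with an invertible $V \in L(H)$ such that $\op = V T V^{-1}$. Since $0 < a < 1$, Theorem~\ref{thm model a in 01} now applies to $T$ and yields a Hilbert space isometry $S$, a defect space $\FD = \ol{DH}$ with $D = ((1-t)^a(T^*,T))^{1/2}$, an invariant subspace $M$ of the operator $A := (B_a \otimes I_\FD) \oplus S$, and a unitary $U \colon H \to M$ intertwining $T$ with $A|_M$, that is, $U T = (A|_M)\, U$.

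The conclusion then follows by composing the two maps. The operator $VU^{-1} \colon M \to H$ is bounded and boundedly invertible, and a direct check gives $\op\,(VU^{-1}) = (VU^{-1})(A|_M)$, exhibiting $\op$ as similar to the part $A|_M$ of $(B_a \otimes I_\FD) \oplus S$. The one small point worth flagging is that the symbol $\FD$ in the statement is to be interpreted as the defect space of the intermediate operator $T$ rather than of $\op$ itself, since the defect operator in \eqref{defi D and FD} is only defined once one is already in the setting of an $a$-contraction; similarity does not preserve the expression $(1-t)^a(\cdot^*,\cdot)$, so there is no canonical identification with a defect space built directly from $\op$. I do not foresee any substantive obstacle: the argument is essentially the observation that the relation \emph{similar to a part of} composes transitively with a similarity on the target side.
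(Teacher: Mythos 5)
Your proposal is correct and is exactly the argument the paper intends (the corollary is stated without proof precisely because it is this immediate composition of Corollary~\ref{cor particular eta} with Theorem~\ref{thm model a in 01}). The chain $\op = VTV^{-1}$, $UT=(A|_M)U$ giving $\op(VU^{-1})=(VU^{-1})(A|_M)$ is right, and your remark that $\FD$ must be read as the defect space of the intermediate $a$-contraction $T$ rather than of $\op$ is a correct and worthwhile clarification.
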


There are many papers on $m$-isometries for positive integers $m$.
We can mention the works
\cite{BBMP19,BMN10,BSZ18} by
Berm\'{u}dez and coauthors, \cite{Gu14} by Gu, and \cite{Ryd19} by Rydhe.
In \cite{MajMbSuciu16}, more facts about $2$-isometries are established.
The recent work \cite{Gu18Korean} discusses $m$-isometric tuples of operators
on a Hilbert space.
The $m$-contractions appear as particular cases of the families of operators
considered by Gu in \cite{Gu15}.

The study of $a$-contractions and $a$-isometries for non-integer 
$a>0$ seems to be new.
In \cite{ABY19_1} we discussed some ergodic properties of
$a$-contractions when $0<a<1$. 

The topic of $a$-contractions and $a$-isometries is closely related with 
the topic of finite differences.
Given a sequence of real numbers $\La = \{ \La_n \}_{n \ge 0}$, we denote by $\nabla \La$ the
sequence whose $n$-th term is given by $(\nabla \La)_n = \La_{n+1}$,
for $n \ge 0$.
In general, if $\be(t) = \sum \be_n t^n$ is an analytic function, we denote by
$\be(\nabla) \La$ the sequence whose $n$-th term is given by
\[
\be(\nabla) \La_n = \sum_{j=0}^{\infty} \be_j \La_{j+n},
\]
whenever the series on the right hand side converges for every $n \ge 0$.
In particular, for the functions $(1-t)^a$, with $a \in \R$, we put
\[
(1-\nabla)^a \La_n = \sum_{j=0}^{\infty} k^{-a}(j) \La_{j+n}.
\]
Here $\{ k^{-a}(j) \}$ are the so-called \emph{Cesàro numbers}; i.e., 
$k^{-a}(j)$ is the $j$-th Taylor coefficient (at the origin) of the function $(1-t)^a$. 
The above formula is the forward finite difference of order $a$ of the sequence $\La$.
For instance, for $a=1$ we get the first order finite difference 
$(1-\nabla) \La_n = \La_n - \La_{n+1}$.
We address the following two questions.

\smallskip 
\textbf{Question A.} 
Determine for which $a,b >0$ the inequality
$(1-\nabla)^a \La_n \ge 0$ (for every $n \ge 0$)
implies $(1-\nabla)^b \La_n \ge 0$ (for every $n \ge 0$).

\textbf{Question B.} 
Given $a >0$, determine the space of solutions $\La$
of the equation $(1-\nabla)^a \La = 0$.
We answer to Question A in Theorem~\ref{thm ans to Q1},
and to Question B in Theorem~\ref{thm ans to Q2}.
These two theorems rely strongly on results by Kuttner in \cite{Kut57}.
As an immediate 
consequence, we obtain the following two results 
for $a$-contractions and $a$-isometries.
The key idea is to fix a vector $x \in H$ and put $\La_n := \norm{T^n x}^2$
for $n \ge 0$.

\begin{thm}\label{thm consequence Q1}
Let $0<b<a$, where $b$ is not an integer. If $T$ is an $a$-contraction and
$T \in \weak{(1-t)^b}$,
then $T$ is a $b$-contraction.
\end{thm}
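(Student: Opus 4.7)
The strategy is to reduce the operator inequality to the scalar setup of Theorem~\ref{thm ans to Q1} by testing against individual vectors. Fix an arbitrary $x \in H$ and set $\La_n := \|T^n x\|^2$ for $n \ge 0$. For each exponent $c > 0$ for which the relevant series converges, one has the identity
\[
(1-\nabla)^c \La_n = \sum_{j=0}^\infty k^{-c}(j) \|T^{n+j} x\|^2 = \langle (1-t)^c(T^*, T) T^n x, T^n x \rangle,
\]
because by definition $k^{-c}(j)$ are the Taylor coefficients of $(1-t)^c$, and the diagonal matrix entry of $T^{*j}T^j$ at the vector $T^n x$ is $\|T^{n+j}x\|^2$.

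First I would verify the convergence needed to make all of this meaningful. That $T$ is an $a$-contraction requires the series $\sum_j k^{-a}(j) T^{*j} T^j$ to converge in SOT to a non-negative operator, so the $c = a$ identity above is valid and its left-hand side is $\ge 0$: applying the inequality $(1-t)^a(T^*,T) \ge 0$ to the vector $T^n x$ gives $(1-\nabla)^a \La_n \ge 0$ for every $n \ge 0$. The hypothesis $T \in \weak{(1-t)^b}$ provides (even absolute) SOT-convergence of $\sum_j |k^{-b}(j)| T^{*j}T^j$, so the scalar series $\sum_j k^{-b}(j) \|T^{n+j}x\|^2$ converges absolutely for every $n$, and $(1-\nabla)^b \La_n$ is well defined.

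At this point the scalar statement Theorem~\ref{thm ans to Q1}, applied to the non-negative sequence $\La$ with $0 < b < a$ and $b \notin \N$, upgrades $(1-\nabla)^a \La_n \ge 0$ to $(1-\nabla)^b \La_n \ge 0$ for every $n \ge 0$. Specializing to $n = 0$ and reading the identity backwards gives $\langle (1-t)^b(T^*, T) x, x \rangle \ge 0$; since $x \in H$ is arbitrary and $(1-t)^b(T^*, T)$ is self-adjoint (its Taylor coefficients are real), this is precisely the operator inequality $(1-t)^b(T^*, T) \ge 0$, i.e., $T$ is a $b$-contraction. All the analytic substance is absorbed by Theorem~\ref{thm ans to Q1}; the only operator-theoretic issue one might worry about, namely the interpretation of the fractional differences of $\La$, is handled automatically by the convergence built into the notion of $a$-contraction and by the assumption $T \in \weak{(1-t)^b}$, so the argument is essentially a packaging of the scalar result.
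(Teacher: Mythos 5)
Your proposal is correct and follows essentially the same route as the paper: fix $x\in H$, set $\La_n:=\norm{T^nx}^2$, observe that the $a$-contraction hypothesis gives $(1-\nabla)^a\La_n\ge 0$ while $T\in\weak{(1-t)^b}$ makes $(1-\nabla)^b\La$ well defined, and then invoke Theorem~\ref{thm ans to Q1}. The only cosmetic difference is that you specialize the resulting inequality to $n=0$ before quantifying over $x$, whereas the paper keeps all $n$; both yield $(1-t)^b(T^*,T)\ge 0$.
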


\begin{thm}\label{thm consequence Q2}
Let $a>0$, and let the integer $m$ be defined by $m<a\le m+1$.
Then the following statements are equivalent.
\begin{enumerate}[\rm (i)]
\item
$T$ is an $a$-isometry.
\item
$T$ is an $(m+1)$-isometry.
\item
For each vector $h \in H$,
there exists a polynomial $p$ of degree at most $m$
such that $\norm{T^n h}^2 = p(n)$ for every $n \ge 0$.
\end{enumerate}
\end{thm}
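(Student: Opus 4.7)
The plan is to follow the author's hint: for a fixed $h \in H$, set $\La_n^h := \norm{T^n h}^2$, and reformulate each of the three conditions as a statement about the family of scalar sequences $\{\La^h\}_{h \in H}$. The operator $(1-t)^a(T^*, T)$ is self-adjoint, so condition (i) is equivalent to the vanishing of $\ip{(1-t)^a(T^*, T)\, T^n h}{T^n h}$ for every $h \in H$ and every $n \ge 0$, which unfolds to $(1-\nabla)^a \La^h_n = 0$ for every $h$ and every $n$. The same argument applied to the finite sum $(1-t)^{m+1}(T^*,T)$ reformulates (ii) as $(1-\nabla)^{m+1} \La^h = 0$ for every $h$, while (iii) is literally a condition on the $\La^h$.

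With these reformulations, the classical fact that a scalar sequence is annihilated by the $(m+1)$-th forward finite difference if and only if it coincides with a polynomial of degree at most $m$ gives (ii) $\iff$ (iii). The implication (i) $\Rightarrow$ (iii) then follows from Theorem~\ref{thm ans to Q2}, which describes the solution space of $(1-\nabla)^a \La = 0$ as exactly the polynomials of degree at most $m$.

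To close the loop I would prove (iii) $\Rightarrow$ (i). Writing $\La^h_n = \sum_{k=0}^{m} a_k(h) \binom{n}{k}$, each coefficient $a_k(h) = \sum_{j=0}^{k} (-1)^{k-j}\binom{k}{j} \norm{T^j h}^2$ is a bounded quadratic form in $h$, so $|a_k(h)| \le C_k \norm{h}^2$, and consequently $\norm{T^n}^2 \le C(1+n)^m$ uniformly in $n$. For non-integer $a \in (m,m+1)$ the Cesàro numbers satisfy $|k^{-a}(j)| = O(j^{-a-1})$ by Stirling, and since $a > m$ the series $\sum_j k^{-a}(j) T^{*j} T^j$ converges absolutely in operator norm, so $T \in \weak{(1-t)^a}$ and the operator $(1-t)^a(T^*, T)$ is well-defined; the integer case $a = m+1$ is already absorbed by the equivalence (ii) $\iff$ (iii). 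The vanishing $(1-t)^a(T^*, T) = 0$ then follows from Theorem~\ref{thm ans to Q2} in the reverse direction: every polynomial of degree at most $m$ is annihilated by $(1-\nabla)^a$.

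I expect the main obstacle to be the bookkeeping step of upgrading the \emph{pointwise} polynomial growth of each individual sequence $\La^h$ to the \emph{uniform} operator-norm bound $\norm{T^n}^2 = O(n^m)$, on which the convergence of the series defining $(1-t)^a(T^*, T)$ ultimately hinges; everything else is either classical or a direct invocation of Theorem~\ref{thm ans to Q2}.
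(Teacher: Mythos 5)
Your proposal is correct and follows essentially the same route as the paper: reduce everything to the scalar sequences $\La^h_n=\norm{T^nh}^2$, invoke Theorem~\ref{thm ans to Q2}, and for the converse direction establish the uniform bound $\norm{T^n}^2\lesssim (n+1)^m$ so that $m<a$ guarantees convergence of $\sum_j k^{-a}(j)T^{*j}T^j$ and hence $T\in\weak{(1-t)^a}$. The only cosmetic difference is that you derive that uniform bound directly from the Newton-form coefficients $a_k(h)$ being bounded quadratic forms, whereas the paper cites it as a known property of $(m+1)$-isometries.
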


The contents of the paper is the following.
In Section~\ref{Section Similarity results} we prove
Theorem~\ref{thm quitar alpha tilde} using
some Banach algebras techniques.
In Section~\ref{Alternative proof in the Nevannlina-Pick case}
we give a direct proof of Theorem~\ref{thm model a in 01}
based on approximation in Besov spaces.
In Section~\ref{Section Inclusions} we
obtain answers to questions (A) and (B) above about finite differences,
and prove Theorems~\ref{thm consequence Q1}~and~\ref{thm consequence Q2}.
Finally, in Section~\ref{Section a bigger than 1} we show that a natural
conjecture about the general form of a unitarily equivalent model for $a$-contractions 
with $a>1$ is false.
The form of such model and its construction remain open. 


\section{Similarity results}
\label{Section Similarity results}

Recall that for a fixed operator $T \in L(H)$ with $\si(T) \ss \ol{\D}$, we put
\begin{equation}\label{eq defi AT}
\CA_T := \left\{ \al \in \AWC \, : \, \sum_{n=0}^{\infty} \abs{\al_n} T^{*n} T^n 
\tn{ converges in SOT} \right\}.
\end{equation}

If $X$ and $Y$ are two quantities (typically non-negative),
then $X \lesssim Y$ (or $Y \gtrsim X$) will mean that $X \le CY$ 
for some absolute constant $C > 0$.
If the constant $C$ depends on some parameter $p$,
then we write $X \lesssim_{\, p} Y$. We put $X \asymp Y$ when both
$X \lesssim Y$ and $Y \lesssim X$.

The main goal of this section is to prove Theorem~\ref{thm quitar alpha tilde}. 
We will use the following known fact.

\begin{lemma}[{\cite[Problem 120]{Hal82}}]
\label{Halmos-lemma}
If an increasing sequence $\{A_n\}$ of selfadjoint Hilbert space operators satisfies $A_n\le CI$
for all $n$, where $C$ is a constant, then $\{ A_n \}$ converges in the strong operator topology.
\end{lemma}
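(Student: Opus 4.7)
The plan is to prove this in three stages, moving from weak pointwise-scalar convergence to weak operator convergence and finally to strong operator convergence, with the last step relying on the Cauchy--Schwarz inequality associated to a positive operator.

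First I would observe that the sequence is uniformly norm-bounded: since $A_0\le A_n\le CI$ and all $A_n$ are selfadjoint, $\norm{A_n}\le M:=\max(\norm{A_0},|C|)$. Next, for each fixed $x\in H$ the scalar sequence $\ip{A_n x}{x}$ is monotone increasing and bounded above by $C\norm{x}^2$, hence convergent in $\R$, in particular Cauchy. This already yields weak convergence by polarization, but the real task is to upgrade it to convergence of the vectors $A_n x$.

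The key step will be the following standard fact: if $B\ge 0$ then the sesquilinear form $(u,v)\mapsto\ip{Bu}{v}$ is positive semidefinite and hence satisfies Cauchy--Schwarz, $\abs{\ip{Bu}{v}}^2\le\ip{Bu}{u}\ip{Bv}{v}$. For $n\ge m$ set $B:=A_n-A_m\ge 0$ and apply this with $u=x$, $v=Bx$ to obtain
\[
\norm{Bx}^4=\abs{\ip{Bx}{Bx}}^2\le \ip{Bx}{x}\,\ip{B(Bx)}{Bx}\le \ip{Bx}{x}\,\norm{B}\,\norm{Bx}^2,
\]
so that $\norm{(A_n-A_m)x}^2\le \norm{A_n-A_m}\cdot\ip{(A_n-A_m)x}{x}\le 2M\bigl(\ip{A_n x}{x}-\ip{A_m x}{x}\bigr)$. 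Since the right-hand side tends to $0$ as $m,n\to\infty$, the sequence $\{A_n x\}$ is Cauchy in $H$ and therefore converges. Defining $Ax:=\lim_n A_n x$ produces the desired SOT limit (and $A$ is automatically a selfadjoint operator with $\norm{A}\le M$).

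I do not expect any serious obstacle. The only non-trivial ingredient is the operator-theoretic Cauchy--Schwarz inequality above, which is entirely standard and follows by applying the classical Cauchy--Schwarz proof to the nonnegative form $\ip{B\cdot}{\cdot}$; everything else reduces to monotone convergence of bounded real sequences.
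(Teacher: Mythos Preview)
Your proof is correct and is exactly the classical argument. Note, however, that the paper does not give its own proof of this lemma: it simply quotes it as a known fact from Halmos (Problem~120), so there is nothing to compare against beyond observing that what you wrote is essentially the solution one finds in Halmos.
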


Using this lemma, in \cite{ABY19_1} we obtained the following fact.
\begin{prop}
[{\cite[Proposition 2.3]{ABY19_1}}]
\label{prop equivalences for Adm alpha weak}
Let $\al \in \AWC$.
Then the following statements are equivalent.
\begin{enumerate}[\rm (i)]
\item
$\al \in \CA_T$.
\item
$\sum_{n=0}^{\infty} |\al_n| \norm{T^n x}^2 <\infty$ for every $x \in H$.
\item
$\sum_{n=0}^{\infty} \abs{\al_n} \norm{T^n x}^2 \lesssim \norm{x}^2$ for every $x \in H$.
\end{enumerate}
\end{prop}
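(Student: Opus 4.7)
The plan is to prove the chain (iii) $\Rightarrow$ (i) $\Rightarrow$ (ii) $\Rightarrow$ (iii), with the first two implications being essentially immediate and the last one relying on the uniform boundedness principle. Throughout the argument, set
\[
A_N := \sum_{n=0}^{N} |\al_n|\, T^{*n}T^n,
\]
which is a positive selfadjoint operator for every $N\ge 0$, and the sequence $\{A_N\}$ is increasing in the usual order of selfadjoint operators.

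For (iii) $\Rightarrow$ (i) I would note that (iii) gives $\langle A_N x,x\rangle \le C\|x\|^2$ for every $N$ and $x$, so $A_N\le CI$ uniformly in $N$; Lemma~\ref{Halmos-lemma} then yields SOT convergence of $\{A_N\}$, which is precisely condition~(i). For (i) $\Rightarrow$ (ii) I would simply pair the SOT limit against a fixed $x$: the series $\sum |\al_n|\langle T^{*n}T^n x,x\rangle = \sum |\al_n|\|T^n x\|^2$ must converge as a sum of nonnegative terms.

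The substantive step is (ii) $\Rightarrow$ (iii), where I would invoke the Banach--Steinhaus theorem. The positive square roots $A_N^{1/2}$ are bounded selfadjoint operators, and for each fixed $x\in H$ one has
\[
\sup_{N}\; \|A_N^{1/2}x\|^2 \;=\; \sup_N\, \langle A_N x,x\rangle \;=\; \sum_{n=0}^{\infty} |\al_n|\,\|T^n x\|^2 \;<\;\infty
\]
by hypothesis (ii). Thus $\{A_N^{1/2}\}$ is pointwise bounded, so by the uniform boundedness principle there exists $C>0$ with $\|A_N^{1/2}\|\le C^{1/2}$ for all $N$. Consequently $\langle A_N x,x\rangle\le C\|x\|^2$ uniformly in $N$, and letting $N\to\infty$ inside the nonnegative series gives the required estimate
\[
\sum_{n=0}^{\infty} |\al_n|\,\|T^n x\|^2 \;\le\; C\|x\|^2,
\]
which is (iii).

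I do not expect a real obstacle here; the only delicate point is to make sure one applies uniform boundedness to the family $\{A_N^{1/2}\}$ (which is linear) rather than to the nonlinear maps $x\mapsto \langle A_Nx,x\rangle$, and to remember that the positivity of $A_N$ is what allows us to pass from a bound on diagonal quadratic forms to a bound on operator norms. Once (iii) is in hand, SOT convergence comes for free from Lemma~\ref{Halmos-lemma}, closing the cycle.
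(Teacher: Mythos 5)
Your proof is correct, and it matches the route the paper indicates: the present paper does not reproduce the argument (it imports Proposition~\ref{prop equivalences for Adm alpha weak} from the companion paper \cite{ABY19_1}), but it explicitly states that the result is obtained ``using this lemma,'' namely Lemma~\ref{Halmos-lemma}, which is exactly your key step for (iii) $\Rightarrow$ (i). The remaining implications — trivially reading off (ii) from SOT convergence, and upgrading (ii) to (iii) by applying Banach--Steinhaus to the linear family $A_N^{1/2}$ rather than to the quadratic forms — are the standard completion of the cycle and are carried out correctly.
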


For any $\be \in \CA_T$, define
\begin{equation}\label{eq defi norm in AT}
\norm{\be}_{\CA_T} 
:= \sup_N \norm{\sum_{n=0}^{N} \abs{\be_n} T^{*n}T^n}_{L(H)}  + \norm{\be}_{\AWC}.
\end{equation}

By Proposition~\ref{prop equivalences for Adm alpha weak},
if $\be \in \CA_T$, then there exists a constant $C$ such that
\begin{equation}\label{eq equiv def AT}
\sum_{n=0}^{N} \abs{\be_n} \norm{T^n x}^2 \le C \norm{x}^2
\end{equation}
for every integer $N$ and every $x \in H$.
Indeed, it is immediate to see that one can take $C = \norm{\be}_{\CA_T}$ above,
and hence we obtain the following result.

\begin{prop}\label{prop upper bound in terms of norm beta in AT}
If $\be$ is a function in $\CA_T$, then for every vector $x \in H$ we have
\[
\sum_{n=0}^{\infty} \abs{\be_n} \norm{T^n x}^2 \le \norm{\be}_{\CA_T} \norm{x}^2.
\]
\end{prop}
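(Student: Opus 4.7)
The plan is very short because the result is essentially a direct unpacking of the definition of $\norm{\be}_{\CA_T}$ combined with the standard numerical-range bound for positive operators.

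First I would set $S_N := \sum_{n=0}^{N} \abs{\be_n} T^{*n}T^n$ for each integer $N \ge 0$. Each $S_N$ is a positive selfadjoint operator on $H$, and by the very definition \eqref{eq defi norm in AT} we have
\[
\norm{S_N}_{L(H)} \le \sup_M \norm{S_M}_{L(H)} \le \norm{\be}_{\CA_T}.
\]
Next, for any fixed $x \in H$ I would rewrite the finite partial sum as an inner product:
\[
\sum_{n=0}^{N} \abs{\be_n} \norm{T^n x}^2
= \sum_{n=0}^{N} \abs{\be_n} \ip{T^{*n}T^n x}{x}
= \ip{S_N x}{x}.
\]
Since $S_N \ge 0$, the standard bound $\ip{S_N x}{x} \le \norm{S_N}_{L(H)} \norm{x}^2$ yields
\[
\sum_{n=0}^{N} \abs{\be_n} \norm{T^n x}^2 \le \norm{\be}_{\CA_T} \norm{x}^2.
\]

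Finally, since $\be \in \CA_T$, Proposition~\ref{prop equivalences for Adm alpha weak} guarantees that $\sum_{n=0}^{\infty} \abs{\be_n} \norm{T^n x}^2$ converges, so letting $N \to \infty$ in the displayed inequality gives exactly the claimed bound. There is no real obstacle here: the only point worth mentioning is that the monotone convergence of the numerical scalar partial sums (or, equivalently, Lemma~\ref{Halmos-lemma} applied to $\{S_N\}$) allows the passage to the limit without any additional estimate, and the constant that appears is precisely the first summand in the definition of $\norm{\be}_{\CA_T}$, with the $\norm{\be}_{\AWC}$ contribution only making the bound looser.
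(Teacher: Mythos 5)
Your proof is correct and is essentially the argument the paper gives: bound the quadratic form $\ip{S_N x}{x}$ by $\norm{S_N}_{L(H)}\norm{x}^2 \le \norm{\be}_{\CA_T}\norm{x}^2$ using the first summand in \eqref{eq defi norm in AT}, then let $N\to\infty$. Nothing further is needed.
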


\begin{nota}\label{notation succ}
If $f(t)=\sum_{n=0}^\infty f_n t^n$ and $g(t)=\sum_{n=0}^\infty g_n t^n$, we
use the notation $f \succcurlyeq g$ when $f_n \geq g_n$ for every $n \geq 0$, and
the notation  $f \succ g$ when $f \succcurlyeq g$ and $f_0 > g_0$. 
For any non-negative integer $N$, we denote by $[f]_N$ the truncation 
$\sum_{n=0}^N f_n t^n$. 
 \end{nota}

\begin{thm}\label{Thm A_T Banach algebra}
For every operator $T$ in $L(H)$ with spectrum contained in $\ol{\D}$,
$\CA_T$ is a Banach algebra with norm given by \eqref{eq defi norm in AT}.
\end{thm}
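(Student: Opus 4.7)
The plan is to verify the four required properties — closure under vector operations, the norm axioms, submultiplicativity, and completeness — in turn. For linearity, since $|(\al+\be)_n| \le |\al_n|+|\be_n|$, the partial sums $\sum_{n=0}^N |(\al+\be)_n|T^{*n}T^n$ are positive, increasing in $N$, and uniformly bounded whenever $\al,\be\in\CA_T$; Lemma~\ref{Halmos-lemma} then gives SOT-convergence, so $\al+\be\in\CA_T$. Closure under scalar multiplication is immediate, and the norm axioms follow from those of $\norm{\cdot}_{\AWC}$ together with properties of the supremum.

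The central step is submultiplicativity. Starting from $|(\al\be)_n| \le \sum_{j+k=n}|\al_j||\be_k|$ and the factorization $T^{*n}T^n = T^{*j}(T^{*k}T^k)T^j$ whenever $j+k=n$, the partial sum rearranges as
\[
\sum_{n=0}^N |(\al\be)_n|\, T^{*n}T^n \le \sum_{j=0}^N |\al_j|\, T^{*j}\Bigl(\sum_{k=0}^{N-j} |\be_k|\, T^{*k}T^k\Bigr) T^j.
\]
The inner bracket is dominated by $M_\be\, I$, where $M_\be:=\sup_N\bigl\|\sum_{k=0}^N|\be_k|T^{*k}T^k\bigr\|$; sandwiching by $T^{*j}\,\cdot\,T^j$ preserves this positive operator inequality, so the right-hand side is at most $M_\be\sum_{j=0}^N|\al_j|T^{*j}T^j \le M_\al M_\be\, I$. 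Combined with the bound $\norm{\al\be}_{\AWC}\le\norm{\al}_{\AWC}\norm{\be}_{\AWC}$ and the elementary inequality $a_1 b_1 + a_2 b_2 \le (a_1+a_2)(b_1+b_2)$ for non-negative reals, this gives $\norm{\al\be}_{\CA_T}\le \norm{\al}_{\CA_T}\norm{\be}_{\CA_T}$. Membership $\al\be\in\CA_T$ then follows once more from Lemma~\ref{Halmos-lemma}.

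For completeness, let $\{\be^{(k)}\}$ be Cauchy in $\CA_T$. Since the $\CA_T$-norm dominates $\norm{\cdot}_{\AWC}$, the sequence converges in $\AWC$ to some $\be$, and in particular each coefficient converges: $\be^{(j)}_n \to \be_n$. Applying Proposition~\ref{prop upper bound in terms of norm beta in AT} to $\be^{(j)}-\be^{(k)}$ yields
\[
\sum_{n=0}^N |\be^{(j)}_n - \be^{(k)}_n|\, \norm{T^n x}^2 \le \norm{\be^{(j)}-\be^{(k)}}_{\CA_T} \norm{x}^2
\]
for every $N$ and $x\in H$. Letting $j\to\infty$ (the sum on the left is finite, so coefficient-wise convergence suffices), the left-hand side converges to $\sum_{n=0}^N|\be_n - \be^{(k)}_n|\norm{T^n x}^2$, while the right-hand side is bounded by $\liminf_j\norm{\be^{(j)}-\be^{(k)}}_{\CA_T}\norm{x}^2$, which is arbitrarily small for $k$ large. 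Taking the supremum in $N$ then shows $\be-\be^{(k)}\in\CA_T$ with vanishing $\CA_T$-norm, so $\be\in\CA_T$ and $\be^{(k)}\to\be$ in $\CA_T$.

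The main obstacle I anticipate is not computational but conceptual: ensuring that the operator rearrangement in the submultiplicativity step produces a genuine positive operator inequality despite $T$ and $T^*$ not commuting. This is resolved by working throughout with the positive operators $T^{*m}T^m$ and exploiting that the conjugation $A\mapsto T^{*j}AT^j$ preserves the order relation.
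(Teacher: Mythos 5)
Your proof is correct and follows essentially the same route as the paper: the submultiplicativity step is the paper's rearrangement $T^{*(j+k)}T^{j+k}=T^{*j}(T^{*k}T^k)T^j$ (which the paper carries out at the level of quadratic forms via the truncations $[\,\cdot\,]_N$ and the relation $\succcurlyeq$, while you phrase it as an operator inequality), and the completeness argument via the $\AWC$-limit and passage to the limit in the partial sums is identical. The only difference worth noting is that you make the norm inequality $\norm{\al\be}_{\CA_T}\le\norm{\al}_{\CA_T}\norm{\be}_{\CA_T}$ fully explicit, whereas the paper contents itself with a constant $C=\norm{\abs{\ga}(T^*,T)}\,\norm{\be}_{\CA_T}$ and leaves submultiplicativity implicit.
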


\begin{proof}
Let us prove that $\CA_T$ has the multiplicative property of algebras and its completeness
(the rest of properties for being a Banach algebra are immediate).

Let $\be$ and $\ga$ belong to $\CA_T$. We want to prove that
their product $\delta$ also belongs to $\CA_T$.
Note that $\delta \in \AWC$.
By Proposition~\ref{prop equivalences for Adm alpha weak},
we just need to prove the existence of a constant $C>0$ such that
\begin{equation}\label{eq product be ga algebra}
\sum_{n=0}^{N} \abs{\delta_n} \norm{T^n x}^2 \le C \norm{x}^2
\end{equation}
for every non-negative integer $N$ and every vector $x \in H$. 

So take any $x \in H$, and let $N \ge 0$. 
Put
\[
\abs{\be} (t) := \sum_{n=0}^{\infty} \abs{\be_n} t^n, \quad \abs{\ga} (t) 
:= \sum_{n=0}^{\infty} \abs{\ga_n} t^n, \quad \wt{\delta}(t) = \abs{\be}(t) \cdot \abs{\ga}(t).
\]
Hence
\[
\wt{\de}_n = \sum_{j=0}^{n} \abs{\be_j} \abs{\ga_{n-j}} \ge \abs{\de_n}.
\]
Therefore \eqref{eq product be ga algebra} will follow if we prove the existence of 
a positive constant $C$ such that
\begin{equation}\label{eq product be ga algebra 2}
\sum_{n=0}^{N} \wt{\delta}_n \norm{T^n x}^2 \le C \norm{x}^2.
\end{equation}
Using that $[\abs{\be}]_N \cdot \abs{\ga} \succ [\wt{\de}]_N$
(recall Notation~\ref{notation succ}),
we have that
\[
\begin{split}
\sum_{n=0}^{N} \wt{\delta}_n \norm{T^n x}^2
&= \ip { [\wt{\de}]_N (T^*,T) x } { x } \le \ip {([\abs{\be}]_N \cdot \abs{\ga})(T^*,T) x} {x} \\
&= \sum_{n=0}^{N} \ip {\abs{\ga}(T^*,T) \abs{\be_n} T^n x} {T^n x}
\le \norm{\abs{\ga}(T^*,T)} \sum_{n=0}^{N} \abs{\be_n} \norm{T^n x}^2 \\
&\le \norm{\abs{\ga}(T^*,T)} \norm{\be}_{\CA_T} \norm{x}^2.
\end{split}
\]
Note that the operator $\abs{\ga} (T^*, T)$ belongs to $L(H)$ because $\ga \in \CA_T$. 
Now we can take $C = \norm{\abs{\ga} (T^*, T)} \norm{\be}_{\CA_T}$ 
(which depends neither on $N$ nor on $x$), 
and \eqref{eq product be ga algebra 2} follows.

Let us prove now the completeness of $\CA_T$. 
Let $\{ \be^{(k)} \}_{k\ge 0}$ be a Cauchy sequence in $\CA_T$. 
In other words,
\begin{equation}\label{eq product be ga algebra 3}
\norm{\be^{(k)} - \be^{(\ell)}}_{\CA_T} \to 0 \quad \tn{ when } k, \ell \to \infty.
\end{equation}
We want to prove the existence of a function $\be$ in $\CA_T$ 
such that the sequence $\{ \be^{(k)} \}_{k\ge 0}$
converges to $\be$ in the norm $\norm{\cdot}_{\CA_T}$.
Since $\norm{\cdot}_{\CA_T} \ge \norm{\cdot}_{\AWC}$ and
$\AWC$ is complete,
there exists a function $\be$ in $\AWC$ such that $\be^{(k)}$ converge
to $\be$ in the norm of $\AWC$. Now fix $\ep > 0$. 
Then by~\eqref{eq product be ga algebra 3}, 
there exists an integer $M$ such that
\[
\norm{\be^{(k)} - \be^{(\ell)}}_{\CA_T} < \ep
\]
if $k, \ell \ge M$. In other words, for every $N$ we have
\[
\norm{\sum_{n=0}^{N} \abs{\be_n^{(k)} - \be_n^{(\ell)}} T^{*n} T^n}_{L(H)} 
+ \norm{\be^{(k)} - \be^{(\ell)}}_{\AWC} < \ep.
\]
Now taking the limit when $\ell \to \infty$ above, we obtain
\[
\norm{\sum_{n=0}^{N} \abs{\be_n^{(k)} - \be_n} T^{*n} T^n}_{L(H)} 
+ \norm{\be^{(k)} - \be}_{\AWC} \le \ep,
\]
so $\norm{\be^{(k)} - \be}_{\CA_T} \le \ep$ (if $k \ge M$).
\end{proof}

Recall that in the Introduction we defined $\CA_T^{0}$ as
\begin{equation}
\label{charn_A_T^0}
\CA_T^{0} =
\bigg\{
\be \in \CA_T \, : \, \sum_{n=0}^{\infty} \abs{\be_n} T^{*n} T^n
\tn{ converges in norm of } L(H)
\bigg\}.
\end{equation}

\begin{prop}\label{prop AT0 conv in norm}
$\CA_T^{0}$ is the closure of the polynomials in $\CA_T$.
In particular, it is a separable closed subalgebra of $\CA_T$.
\end{prop}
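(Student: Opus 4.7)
My plan is to prove the characterization of $\CA_T^0$ as the $\|\cdot\|_{\CA_T}$-closure of the polynomial algebra by checking three facts in sequence: polynomials lie in $\CA_T^0$, polynomials are dense in $\CA_T^0$, and $\CA_T^0$ is $\|\cdot\|_{\CA_T}$-closed. The first is immediate, because for any polynomial the series in \eqref{charn_A_T^0} terminates and is trivially norm-convergent.

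For density, given $\be \in \CA_T^0$, I would estimate $\|\be - [\be]_N\|_{\CA_T}$ straight from the definition \eqref{eq defi norm in AT}. Since the Taylor coefficients of $\be - [\be]_N$ vanish up through index $N$,
\[
\|\be - [\be]_N\|_{\CA_T} = \sup_{M > N} \Bigl\|\sum_{n=N+1}^{M} \abs{\be_n} T^{*n} T^n\Bigr\|_{L(H)} + \sum_{n > N} \abs{\be_n}.
\]
The first summand tends to $0$ as $N \to \infty$ because the defining series of $\CA_T^0$ is norm-Cauchy, and the second tends to $0$ because $\be \in \AWC$.

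For closedness, I would take $\{\be^{(k)}\} \subset \CA_T^0$ with $\be^{(k)} \to \be$ in $\CA_T$ and show that the partial sums of $\sum \abs{\be_n} T^{*n} T^n$ are Cauchy in $L(H)$. The key trick is that a tail $\sum_{n=N+1}^{M} \abs{\be_n - \be_n^{(k)}} T^{*n} T^n$ can be written as the difference of two partial sums starting at $0$, so its norm is bounded by $2\|\be - \be^{(k)}\|_{\CA_T}$. Combining this with the termwise operator inequality $\abs{\be_n}\le \abs{\be_n - \be_n^{(k)}} + \abs{\be_n^{(k)}}$ (and positivity, so that norms of partial sums are monotone in the coefficients) together with the norm-convergence guaranteed for $\be^{(k)}$ gives the desired Cauchy estimate for $\be$.

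The ``separable closed subalgebra'' conclusion follows at once: $\CA_T^0$ is the closure of the polynomial subalgebra inside the Banach algebra $\CA_T$ (Theorem~\ref{Thm A_T Banach algebra}), hence itself a closed subalgebra, and polynomials with coefficients in $\Q + i\Q$ provide a countable $\|\cdot\|_{\CA_T}$-dense subset, since approximating coefficients of a fixed polynomial by rationals yields a $\|\cdot\|_{\CA_T}$-error controlled by $\sum_{n\le N} |p_n-q_n|(1+\|T^n\|^2)$. The only mildly subtle step is the triangle-inequality bookkeeping in the closedness argument, since $\|\cdot\|_{\CA_T}$ naturally controls partial sums from $0$ rather than tails.
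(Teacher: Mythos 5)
Your proof is correct and takes essentially the same route as the paper's: both directions rest on the same truncation estimates in the $\|\cdot\|_{\CA_T}$-norm together with the monotonicity of partial sums of the positive operator series. The only difference is organizational --- you obtain the inclusion of the closure of the polynomials in $\CA_T^{0}$ by combining ``polynomials lie in $\CA_T^{0}$'' with an explicit closedness argument for $\CA_T^{0}$, whereas the paper compares $\be$ directly with a nearby polynomial, but the underlying estimate is identical.
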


\begin{proof}
Let us denote provisionally by $\mathcal{CP}$ the closure of the polynomials in $\CA_T$.
Let $\be \in \mathcal{CP}$ and fix $\ep > 0$. There exists a polynomial $p$ such that
\[
\norm{\be - p}_{\CA_T} < \ep/2.
\]
Let $N$ be an integer larger that the degree of $p$. Since $p=[p]_N$,
\[
\begin{split}
\bigg\| \sum_{n=N+1}^{\infty} \abs{\be_n} T^{*n} T^n \bigg\|_{L(H)}
&\le \norm{\be - [\be]_N}_{\CA_T} \le \norm{\be-p}_{\CA_T} + \norm{[\be-p]_N}_{\CA_T} \\
&\le 2 \norm{\be-p}_{\CA_T} < \ep.
\end{split}
\]
Hence $\sum_{n=0}^{\infty} \abs{\be_n} T^{*n} T^n$ converges uniformly in $L(H)$.
This proves the inclusion $\mathcal{CP} \ss \CA_T^{0}$.

The inclusion $\CA_T^{0} \ss \mathcal{CP}$ is immediate.
Indeed, any $\be \in \CA_T^{0}$ can be approximated
in $\CA_T$ by the truncations $[\be]_N$.
\end{proof}

\begin{prop}\label{thm gamma n less than beta n then gamma also in AT}
Let $\be$ be a function in $\CA_T$.
\begin{enumerate}[\rm (i)]
\item
If $\abs{\ga_n} \le \abs{\be_n}$ for every $n$, 
then $\ga$ also belongs to $\CA_T$ and moreover
$\norm{\ga}_{\CA_T} \le \norm{\be}_{\CA_T}$.
\item
If $\ga_n = \be_n \tau_n$, where $\tau_n \to 0$, 
then $\ga$ also belongs to $\CA_T^{0}$.
\end{enumerate}
\end{prop}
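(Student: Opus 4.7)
The plan is to establish (i) directly from the definition of the $\CA_T$-norm together with the Halmos convergence lemma (Lemma~\ref{Halmos-lemma}), and then to deduce (ii) from (i) by showing that the polynomial truncations $[\ga]_N$ converge to $\ga$ in $\CA_T$-norm, so that Proposition~\ref{prop AT0 conv in norm} applies.

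For part (i), the coefficientwise bound $|\ga_n| \le |\be_n|$ immediately gives $\norm{\ga}_{\AWC} \le \norm{\be}_{\AWC}$, as well as the operator inequality
\[
0 \le \sum_{n=0}^N |\ga_n| T^{*n} T^n \le \sum_{n=0}^N |\be_n| T^{*n} T^n
\]
for every $N$. By the definition~\eqref{eq defi norm in AT} of $\norm{\cdot}_{\CA_T}$, the right-hand partial sums are bounded in operator norm by $\norm{\be}_{\CA_T}$ uniformly in $N$; hence so are the left-hand ones. Since the latter form an increasing sequence of selfadjoint positive operators that is uniformly bounded above, Lemma~\ref{Halmos-lemma} yields their convergence in SOT, so $\ga \in \CA_T$. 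Taking operator norms in the displayed inequality, passing to the supremum over $N$, and adding the Wiener-algebra bound gives $\norm{\ga}_{\CA_T} \le \norm{\be}_{\CA_T}$.

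For part (ii), I would show that $[\ga]_N \to \ga$ in the $\CA_T$-norm; by Proposition~\ref{prop AT0 conv in norm} this places $\ga$ in the separable closed subalgebra $\CA_T^{0}$. Set $\ep_N := \sup_{n>N} |\tau_n|$; since $\tau_n \to 0$, we have $\ep_N \to 0$. The coefficients of $\ga - [\ga]_N$ vanish for $n \le N$, and for $n > N$ they satisfy $|\ga_n| = |\be_n|\,|\tau_n| \le \ep_N |\be_n|$. Applying part (i) twice---first to $\ga - [\ga]_N$ against $\ep_N(\be - [\be]_N)$, and then to $\be - [\be]_N$ against $\be$---we obtain
\[
\norm{\ga - [\ga]_N}_{\CA_T} \le \ep_N \norm{\be - [\be]_N}_{\CA_T} \le \ep_N \norm{\be}_{\CA_T} \longrightarrow 0.
\]

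I do not foresee any serious obstacle: the argument is a routine monotone comparison of positive operators together with Halmos's lemma. The only point requiring a little care is to unpack the definition~\eqref{eq defi norm in AT} of $\norm{\cdot}_{\CA_T}$ and check that it behaves well both under coefficientwise domination (used in (i)) and under truncation of the tail (used in (ii)); once this is done, everything reduces to scalar manipulation on the two pieces of the norm.
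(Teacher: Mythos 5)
Your proposal is correct and follows essentially the same route as the paper: part (i) is the ``immediate'' coefficientwise domination argument (which you rightly back up with Lemma~\ref{Halmos-lemma} and the monotonicity of positive operators), and your part (ii) is the paper's tail estimate $\norm{\sum_{n>N}\abs{\ga_n}T^{*n}T^n}\le \sup_{n>N}\abs{\tau_n}\,\norm{\be}_{\CA_T}\to 0$, merely repackaged as convergence of the truncations $[\ga]_N$ in the $\CA_T$-norm and an appeal to Proposition~\ref{prop AT0 conv in norm}. No gaps.
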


\begin{proof}
(i) is immediate. For the proof of (ii), put
\[
C_N := \max_{n \ge N} \abs{\tau_n},
\]
for each positive integer $N$. Then
\[
\norm{ \sum_{n=N}^{\infty} \abs{\ga_n} T^{*n} T^n } 
\le C_N \norm{ \sum_{n=N}^{\infty} \abs{\be_n} T^{*n} T^n } 
\le C_N \norm{\be}_{\CA_T} \to 0,
\]
and therefore $\ga$ belongs to $\CA_T^0$.
\end{proof}

\begin{prop}
The characters of $\CA_T^{0}$ are precisely the evaluation functionals at points of $\ol{\D}$.
\end{prop}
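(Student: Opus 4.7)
The plan is to identify the characters of $\CA_T^{0}$ with the point evaluations at $\la \in \ol{\D}$, exploiting both that $\CA_T^{0}\ss \AWC$ and that polynomials are dense in $\CA_T^{0}$ (Proposition~\ref{prop AT0 conv in norm}). The easy direction is essentially formal: for any $\la \in \ol{\D}$ and $\be \in \CA_T^{0}$ one has $|\be(\la)| \le \norm{\be}_{\AWC} \le \norm{\be}_{\CA_T}$, so the functional $\be \mapsto \be(\la)$ is well defined, linear, unital, multiplicative, and bounded, hence a character.

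For the converse, let $\chi$ be a character of $\CA_T^{0}$ and set $\la := \chi(z)$, where $z$ denotes the monomial $t \mapsto t$. Linearity and multiplicativity immediately give $\chi(p) = p(\la)$ for every polynomial $p$. The delicate point is to show $\la \in \ol{\D}$. For this I would invoke the standard inequality $|\chi(z)| \le \sprad_{\CA_T^{0}}(z) = \lim_n \norm{z^n}_{\CA_T}^{1/n}$. Since $z^n$ has a single nonzero Taylor coefficient (equal to $1$, at position $n$), the formula \eqref{eq defi norm in AT} evaluates to
\[
\norm{z^n}_{\CA_T} = \norm{T^n}^2 + 1.
\]
The assumption $\si(T) \ss \ol{\D}$ gives $\norm{T^n}^{1/n} \to \sprad(T) \le 1$, so $(\norm{T^n}^2 + 1)^{1/n} \to 1$ and therefore $|\la| \le 1$.

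To conclude, note that both $\chi$ and evaluation at $\la$ are continuous linear functionals on $\CA_T^{0}$ (with norm at most $1$) that agree on the dense subalgebra of polynomials; they must therefore coincide on all of $\CA_T^{0}$. The only step of genuine content is the bound $|\la|\le 1$, which as described above reduces to the spectral radius computation in $\CA_T^{0}$, closed out by the hypothesis $\si(T)\ss\ol{\D}$.
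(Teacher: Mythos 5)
Your proof is correct and follows essentially the same route as the paper: set $\la:=\chi(t)$, bound $|\la|$ via $\norm{t^n}_{\CA_T}^{1/n}\to 1$ (which reduces to $\sprad(T)\le 1$), and conclude by density of the polynomials from Proposition~\ref{prop AT0 conv in norm}. The only (welcome) additions are the explicit computation $\norm{t^n}_{\CA_T}=\norm{T^n}^2+1$ and the verification of the easy converse direction, which the paper leaves implicit.
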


\begin{proof}
Let $\chi$ be a character of $\CA_T^{0}$ (i.e., it is a multiplicative
bounded linear functional on $\CA_T^{0}$ that satisfies $\chi(1) = 1$). For
the function $t$ in $\CA_T^{0}$, let us put
\[
\la := \chi(t) \in \C.
\]
Therefore $\chi$ sends every polynomial $p(t)$ into the number $p(\la)$. Let us prove
now that $\abs{\la} \le 1$. Using the obvious fact
\[
\abs{\la} = (\abs{\la}^n)^{1/n} = \abs{\chi(t^n)}^{1/n}
\]
we obtain
\[
\abs{\la} = \limsup_{n \to \infty} \abs{\chi(t^n)}^{1/n} 
\le \limsup_{n \to \infty} \norm{t^n}_{\CA_T^{0}}^{1/n} \le 1,
\]
where we have used that $\norm{\chi} = 1$ (since it is a character) and that
the spectral radius of $T$ is less or equal than $1$.

By the continuity of $\chi$ and the density of the polynomials in $\CA_T^{0}$
we obtain that $\chi$ sends every function
$f(t)$ in $\CA_T^{0}$ to $f(\la)$.
\end{proof}

\begin{cor}\label{cor inverse function in AT0}
If $\be \in \CA_T^{0}$ with $\be(t) \neq 0$ for every $t \in \ol{\D}$, then $1/\be \in \CA_T^{0}$.
\end{cor}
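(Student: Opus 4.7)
The plan is to apply standard Gelfand theory to the commutative unital Banach algebra $\CA_T^{0}$. By Theorem~\ref{Thm A_T Banach algebra}, $\CA_T$ is a Banach algebra, and by Proposition~\ref{prop AT0 conv in norm}, $\CA_T^{0}$ is a closed subalgebra containing all polynomials; in particular it contains the constant function $1$, so it is a commutative unital Banach algebra. Gelfand's criterion then says that an element $\be \in \CA_T^{0}$ is invertible if and only if every character of $\CA_T^{0}$ sends $\be$ to a nonzero complex number.

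The previous proposition identifies the characters of $\CA_T^{0}$ as precisely the evaluation functionals $\chi_\la(f) = f(\la)$ for $\la \in \ol{\D}$. Hence the hypothesis $\be(t) \neq 0$ for every $t \in \ol{\D}$ translates into the statement that $\chi(\be) \neq 0$ for every character $\chi$ of $\CA_T^{0}$. Therefore $\be$ is invertible in the Banach algebra $\CA_T^{0}$, and there exists some $\ga \in \CA_T^{0}$ with $\be \, \ga = 1$ in $\CA_T^{0}$.

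It remains to identify this inverse $\ga$ with the analytic function $1/\be$. This is immediate: since $\CA_T^{0}$ sits inside the Wiener algebra $\AWC$ and multiplication in both algebras is the usual (Cauchy) product of power series, the identity $\be \, \ga = 1$ holds as an equality of power series. Consequently $\ga(t) = 1/\be(t)$ as functions, and this function belongs to $\CA_T^{0}$, as required.

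No step here looks like a serious obstacle; the proof is essentially bookkeeping once the spectrum of $\CA_T^{0}$ has been computed. The only place to be careful is to confirm that $\CA_T^{0}$ is genuinely a commutative unital Banach algebra, but this is guaranteed by Theorem~\ref{Thm A_T Banach algebra} together with the fact established in Proposition~\ref{prop AT0 conv in norm} that $\CA_T^{0}$ is a closed subalgebra containing the polynomials.
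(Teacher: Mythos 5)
Your proof is correct and follows essentially the same route as the paper: identify the characters of $\CA_T^{0}$ with evaluations at points of $\ol{\D}$, then invoke the Gelfand criterion for invertibility in a commutative unital Banach algebra. The extra bookkeeping you supply (unitality, and identifying the Gelfand inverse with the power series of $1/\be$ via the embedding into $\AWC$) is all accurate and merely makes explicit what the paper leaves implicit.
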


Indeed, the condition on $h$ means that $\chi(h) \neq 0$ for every character $\chi$.
Hence the result follows using the Gelfand Theory.

\begin{thm}\label{thm analog key lemma}
Let $T \in L(H)$ with $\si(T) \ss \ol{\D}$ and let $f \in \CA_{T,\R}^{0}$.
If $f(t) > 0$ for every $t \in [0,1]$,
then there exists a function $g \in \CA_{T,\R}^{0}$ such that
$g \succ 0$ and $fg \succ 0$.
\end{thm}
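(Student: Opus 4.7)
The plan is to reduce the problem, via approximation in $\CA_{T,\R}^{0}$, to a positivity statement about polynomials, apply a one-variable form of Pólya's positivstellensatz, and then carefully transfer the conclusion from the polynomial approximant back to~$f$.

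First, by Proposition~\ref{prop AT0 conv in norm}, polynomials are dense in $\CA_{T,\R}^{0}$, so the Taylor truncations $p_N := [f]_N$ converge to $f$ in $\CA_T$-norm, and in particular uniformly on $\ol{\D}$. Since $\min_{t\in[0,1]}f(t) > 0$, this forces $p_N(t) > 0$ on $[0,1]$ for every $N$ large enough. For any real polynomial $p > 0$ on $[0,1]$, a one-variable form of Pólya's theorem yields an integer $M$ such that $q(t):=(1+t)^M$ satisfies $q\succ 0$ and $pq\succ 0$; note that both $p$ and $q$ are polynomials, hence in $\CA_{T,\R}^{0}$.

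The candidate $g:=q$ works for the polynomial $p$ but not directly for $f$, because $fq = pq + (f-p)q$ and the tail $(f-p)q$ may introduce negative coefficients of high degree. To absorb this error, I would take $g(t):=q(t)\psi(t)$ where $\psi\in\CA_{T,\R}^{0}$ has all coefficients strictly positive (a natural choice is $\psi(t) = 1/(1-\rho t)$ with $\rho\in(0,1)$). Then $g\succ 0$, $g\in\CA_{T,\R}^{0}$, and since $pq\succ 0$ with positive constant term, the coefficients of $pg = pq\cdot\psi$ admit a strictly positive lower bound of order~$\rho^n$ in every degree.

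The main obstacle will be to quantitatively dominate $|((f-p)g)_n|$ by the positive lower bound on $(pg)_n$ for every $n$. Since $\psi\in\CA_T^{0}$ forces $\rho<1$ (via $\sum_n \rho^n \|T^n\|^2 <\infty$, using $\limsup \|T^n\|^{1/n}\le 1$), a plain bound of the tail by $\|f-p\|_{A_W}$ is not enough for large $n$, where $(pg)_n$ decays geometrically like $\rho^n$ while the coefficients of $(f-p)g$ may be comparable in size. I expect the remedy to be a joint asymptotic choice of the truncation order $N$, the Pólya exponent $M=M_N$, and the parameter $\rho=\rho_N\to 1^-$, balancing the geometric factor appearing in the lower estimate on $(pg)_n$ against the tail $\sum_{n>N}|f_n|$ (which governs $(f-p)g$). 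Carrying out this balance, very likely aided by a quantitative form of Pólya's theorem, is the technical heart of the argument and the step I would expect to require the most care.
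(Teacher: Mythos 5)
Your reduction to a polynomial approximant is the right first move, but the error-absorption scheme you propose does not work, and this is not merely a matter of tuning parameters. With $g=q\psi$, $\psi(t)=1/(1-\rho t)$ and $\rho<1$ fixed (as it must be: $g$ is a single function, so $\rho$ cannot depend on $n$), the positive lower bound on $(pg)_n$ decays geometrically, like $\rho^n$ up to polynomial factors. On the other hand, the $n$-th coefficient of $(f-p)g$ contains the term $(f-p)_n g_0=f_n g_0$ for $n>N$, and membership of $f$ in the Wiener algebra only guarantees $\sum_n|f_n|<\infty$; the $f_n$ may decay like $n^{-2}$, which for large $n$ dwarfs $\rho^n$ for every $\rho<1$. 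So for any fixed choice of $N$, $M$ and $\rho$ the negative contributions of the tail overwhelm the geometric lower bound at large $n$, and letting $\rho\to1^-$ is not an option since $1/(1-t)$ is not in the Wiener algebra. A secondary problem: the multiplier $(1+t)^M$ from P\'olya's theorem requires positivity of the (homogenized) polynomial on all of $[0,\infty)$; a polynomial positive only on $[0,1]$ may have negative leading coefficient, in which case no polynomial multiplier can make all coefficients of the product positive, so one genuinely needs a rational multiplier as in Lemma~\ref{Lema_Berns_Polya}.

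The proof in the paper sidesteps the absorption problem by a different decomposition. Writing $f_N:=[f]_N-\ep/2$ and $h:=\ep/2+\sum_{n>N,\,f_n<0}f_nt^n$ (with $N$ chosen so that the tail is small), one has $f\succcurlyeq f_N+h$, the polynomial $f_N$ is still positive on $[0,1]$, and $h$ is invertible in $\CA_T^0$ with $h^{-1}\succ0$ (it equals $\ep/2$ times $1-w$ with $w\succcurlyeq 0$ of Wiener norm less than $1$, so the Neumann series has non-negative coefficients). Taking $g:=uh^{-1}$ with $u$ from Lemma~\ref{Lema_Berns_Polya} applied to $f_N$ gives $g(f_N+h)=h^{-1}(uf_N)+u\succ0$ \emph{exactly}: the troublesome tail is never treated as an error to be dominated; its negative part is cancelled algebraically by the factor $h^{-1}$, and its positive part only increases coefficients. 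This algebraic cancellation is the idea your argument is missing.
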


As an immediate consequence of this theorem, we obtain the following result.

\begin{cor}
\label{cor to thm analog key lemma}
If $f \in \AW$ satisfies $f(t) > 0$ for every $t \in [0,1]$,
then there exists a function $g \in \AW$ such that $g \succ 0$ and $fg \succ 0$.
\end{cor}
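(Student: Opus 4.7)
The plan is to derive this corollary as a direct specialization of Theorem~\ref{thm analog key lemma}, by choosing an operator $T$ for which the algebra $\CA_{T,\R}^{0}$ reduces to the whole real Wiener algebra $\AW$.

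First, I would observe that the hypothesis ``$f(t) > 0$ for every $t \in [0,1]$'' forces $f$ to have real Taylor coefficients. Indeed, $f$ is holomorphic on $\D$ and real on the interval $[0,1]$, which has accumulation points, so the function $t \mapsto f(t) - \overline{f(\bar t)}$ is holomorphic on $\D$ and vanishes on $[0,1]$; by the identity principle it vanishes on $\D$, and comparing power series at the origin yields $\al_n = \overline{\al_n}$ for all $n$, i.e., $f \in \AW$.

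Next, I would take $T = 0$ acting on $H = \C$. Then $\si(T) = \{0\} \ss \ol{\D}$, and $T^{*n}T^n = 0$ for every $n \geq 1$, so each series $\sum \abs{\be_n} T^{*n} T^n$ trivially converges in the operator norm. Consequently
\[
\CA_T^{0} = \CA_T = A_W, \qquad \CA_{T,\R}^{0} = \AW.
\]
Applying Theorem~\ref{thm analog key lemma} to this operator $T$ and to the function $f \in \AW = \CA_{T,\R}^{0}$, we obtain $g \in \CA_{T,\R}^{0} = \AW \ss A_W$ with $g \succ 0$ and $fg \succ 0$, which is exactly the conclusion.

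I expect no real obstacle here: once the Schwarz reflection observation has been made, the corollary is obtained by neutralizing the operator dependence in the theorem through the choice $T = 0$, and the content of the result lies entirely in Theorem~\ref{thm analog key lemma} itself.
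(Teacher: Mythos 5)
Your proof is correct and is essentially the paper's own argument: the paper also deduces the corollary by choosing an operator (any power bounded $T$, of which your $T=0$ on $\C$ is an instance) for which $\CA_{T,\R}^{0}=\AW$, and then invoking Theorem~\ref{thm analog key lemma}. Your preliminary Schwarz-reflection remark is harmless but unnecessary, since $f\in\AW$ already means $f$ has real Taylor coefficients by definition.
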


\begin{proof}
Take as $T$ any power bounded operator. Then $\si(T) \ss \ol{\D}$ and
$\CA_T^{0}$ is precisely the Wiener algebra $\AWC$.
Then apply Theorem~\ref{thm analog key lemma}.
\end{proof}

We denote by
$\mathcal{H}(\ol{\D})$ the set of
all analytic functions in a neighborhood of
$\ol{\D}$.
For the proof of Theorem~\ref{thm analog key lemma} we need
the following result.

\begin{lemma}
[{\cite[Lemma~2.1]{BY18}}]
\label{Lema_Berns_Polya}
If $q$ is a real polynomial such that $q(t)>0$ for every $t \in
[0,1]$, then there exists a rational function $u \in
\mathcal{H}(\ol{\D})$ such that $u\succ 0$ and $uq \succ 0$.
\end{lemma}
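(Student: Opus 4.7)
The plan is to exploit a Bernstein-type representation of $q$ with non-negative coefficients, combined with a rescaling trick to move the resulting pole strictly outside $\ol{\D}$.

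First, using continuity and strict positivity of $q$ on the compact interval $[0,1]$, I would extract $\ep > 0$ with $q > 0$ on $[0, 1+\ep]$ and fix $\rho \in \bigl(1/(1+\ep),\,1\bigr)$. Setting $\wt q(s) := q(s/\rho)$ produces a real polynomial of the same degree $d$ as $q$ that is strictly positive on $[0,1]$, but now our candidate pole $1/\rho$ lies strictly outside $\ol{\D}$.

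The crucial input is Pólya's classical theorem on positive homogeneous polynomials: the homogenization $Q(x,y) := (x+y)^d \wt q\bigl(x/(x+y)\bigr)$ is homogeneous of degree $d$ and strictly positive on the open positive orthant $\{x,y>0\}$. Pólya's theorem provides $N \ge 0$ such that $(x+y)^N Q(x,y)$ has non-negative coefficients in $x$ and $y$. Dehomogenizing via $s = x/(x+y)$, with $M := N+d$, this translates to
\[
\wt q(s) = \sum_{j=0}^{M} c_j\, s^{j}(1-s)^{M-j},\qquad c_j \ge 0,
\]
where in particular $c_0 = \wt q(0) = q(0) > 0$.

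To finish I would substitute $s = \rho t$ to obtain $q(t) = \sum_{j=0}^M c_j \rho^j t^j (1-\rho t)^{M-j}$, and take $u(t) := (1-\rho t)^{-M}$. Since $\rho \in (0,1)$, the only pole of $u$ is at $1/\rho > 1$, so $u$ is a rational function in $\mathcal{H}(\ol{\D})$, and its Taylor coefficients $\binom{n+M-1}{M-1}\rho^n$ are strictly positive, giving $u \succ 0$. The product $u(t)q(t) = \sum_{j=0}^{M} c_j\rho^j\bigl(t/(1-\rho t)\bigr)^{j}$ has non-negative Taylor coefficients, since each $\bigl(t/(1-\rho t)\bigr)^j$ does, and its constant term equals $c_0 = q(0) > 0$; thus $uq \succ 0$. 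The main obstacle is the appeal to Pólya's theorem (classical but non-elementary); the role of the rescaling factor $\rho<1$ is essential, since the natural choice $\rho=1$ would place the pole of $u$ on $\partial\D$ and break the requirement $u \in \mathcal{H}(\ol{\D})$.
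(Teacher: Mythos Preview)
The paper does not prove this lemma; it quotes it verbatim from \cite[Lemma~2.1]{BY18} and uses it as a black box. Your argument is correct and, judging from the label \texttt{Lema\_Berns\_Polya}, is in the same spirit as the original: the key inputs are P\'olya's positivity theorem for homogeneous forms (yielding the Bernstein-type expansion $\wt q(s)=\sum_j c_j s^j(1-s)^{M-j}$ with $c_j\ge 0$) together with the rescaling $s=\rho t$, $\rho<1$, so that the resulting rational $u(t)=(1-\rho t)^{-M}$ has its pole at $1/\rho>1$ and hence lies in $\mathcal{H}(\ol{\D})$. The verifications $u\succ 0$ and $uq=\sum_j c_j\rho^j\bigl(t/(1-\rho t)\bigr)^j\succ 0$ are routine, with the constant term $c_0=q(0)>0$ giving the strict inequality at the zeroth coefficient.
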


\begin{proof}[Proof of Theorem~\ref{thm analog key lemma}]
By hypothesis, there exists a positive number $\ep$ such that 
$f(t) > \ep > 0$ for every $t \in [0,1]$.

\begin{claims}
There exists a positive integer $N$ such that
\[
\norm{\sum_{n=N+1}^{\infty} f_n t^n}_{\CA_T^0} < \ep / 2.
\]
\end{claims}

Indeed, since $f \in \AW$ we deduce that there exists a positive integer $N_1$ 
such that for every $M \ge N_1$ we have
\[
\sum_{n=M+1}^{\infty} \abs{f_n} < \ep / 4.
\]
Using Proposition \ref{prop AT0 conv in norm} 
we obtain the existence of a positive integer $N_2$ such that 
for every $M \ge N_2$ we have
\[
\norm{\sum_{n=M+1}^{\infty} \abs{f_n} T^{*n} T^n}_{L(H)} < \dfrac{\ep}{4}.
\]
Hence, taking $N := \max\{N_1, N_2\}$ the claim follows.

In particular, we get
\[
\sum_{n=N+1}^{\infty} \abs{f_n} < \ep/2.
\]
Put
\[
f_N(t) := \sum_{n=0}^{N} f_n t^n - \dfrac{\ep}{2}, 
\quad \quad 
h(t) := \dfrac{\ep}{2} + \sum_{n \ge N+1; f_n < 0} f_n t^n.
\]
Note that $f_N$ is a polynomial. It is easy to see that $h \in \CA_T^0$. Since
\[
f_N(t) > \ep - \dfrac{\ep}{2} - \dfrac{\ep}{2} = 0
\]
for every $t \in [0,1]$, we can use
Lemma~\ref{Lema_Berns_Polya} to obtain a function $u \in \mathcal{H}(\ol{\D})$ such that
$u \succ 0$ and $u f_N \succ 0$. 
Note that it is immediate that all the functions in $\mathcal{H}(\ol{\D})$ also belong
to the algebra $\CA_T^0$.

For every $t \in \ol{\D}$ we have
\[
\bigg| \sum_{n \ge N+1; \\ f_n < 0} f_n t^n \bigg| 
\le \sum_{n=N+1}^{\infty} \abs{f_n} < \dfrac{\ep}{2}.
\]
Hence $h$ does not vanish on $\ol{\D}$, 
and therefore using Corollary \ref{cor inverse function in AT0} 
we obtain
\[
v := h^{-1} \in \CA_T^0.
\]
Note that $v \succ 0$.
Finally, put $g := uv$. Obviously $g \succ 0$, 
and since $f \succcurlyeq f_N + h$ we obtain that
\[
gf \succcurlyeq g(f_N+h) = vuf_N + u \succ 0,
\]
as we wanted to prove.
\end{proof}

\begin{prop}\label{prop fnTBT SOT}
Let $f \in \CA_{T,\R}$ and let $B \in L(H)$ be a non-negative operator.
Then the operator series
\begin{equation}
\sum_{n=0}^{\infty} f_n T^{*n} B T^n
\end{equation}
converges in the strong operator topology in $L(H)$.
\end{prop}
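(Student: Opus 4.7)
The plan is to reduce to Lemma~\ref{Halmos-lemma} by splitting the series into its positive and negative parts and dominating each part, using that $B\ge 0$, by the series $\sum |f_n| T^{*n} T^n$, which converges in SOT by hypothesis.

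More concretely, I would decompose the real Taylor coefficients as $f_n = f_n^+ - f_n^-$ with $f_n^\pm := \max(\pm f_n, 0) \ge 0$, so that $|f_n| = f_n^+ + f_n^-$. Since $B$ is non-negative and selfadjoint, each operator $T^{*n} B T^n$ is non-negative and selfadjoint, so the partial sums
\[
S_N^\pm := \sum_{n=0}^{N} f_n^\pm \, T^{*n} B T^n
\]
form increasing sequences of selfadjoint operators as $N$ grows.

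Next I would obtain a uniform upper bound. Because $0 \le B \le \norm{B} I$, one has the operator inequality $T^{*n} B T^n \le \norm{B}\, T^{*n} T^n$ for every $n$. Therefore
\[
0 \le S_N^\pm \le \norm{B} \sum_{n=0}^{N} f_n^\pm \, T^{*n} T^n
\le \norm{B} \sum_{n=0}^{N} \abs{f_n} \, T^{*n} T^n.
\]
Since $f \in \CA_{T,\R} \ss \CA_T$, the last sum converges in SOT as $N\to\infty$ (in particular, it is uniformly bounded in operator norm by, say, $\norm{f}_{\CA_T}$, by~\eqref{eq defi norm in AT}). Thus $\{S_N^\pm\}$ is an increasing sequence of selfadjoint operators that is uniformly bounded above by the constant $\norm{B}\,\norm{f}_{\CA_T}$ times the identity.

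Applying Lemma~\ref{Halmos-lemma} to each of $\{S_N^+\}$ and $\{S_N^-\}$ yields SOT convergence of both, and the partial sums of the original series are differences $\sum_{n=0}^N f_n T^{*n} B T^n = S_N^+ - S_N^-$, hence converge in SOT as well. No genuine obstacle arises; the only point to notice is that the argument relies essentially on $B\ge 0$, so that positivity (and hence monotonicity of the partial sums) is preserved after sandwiching with $T^{*n}\,\cdot\, T^n$.
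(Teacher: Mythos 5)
Your proof is correct and follows essentially the same route as the paper: decompose $f_n = f_n^+ - f_n^-$ and apply Lemma~\ref{Halmos-lemma} to the two increasing sequences of selfadjoint partial sums. The only (harmless) difference is that you obtain the required uniform upper bound directly from $T^{*n}BT^n \le \norm{B}\,T^{*n}T^n$ together with the definition \eqref{eq defi norm in AT}, whereas the paper first establishes convergence in the weak operator topology via a Cauchy estimate and deduces the boundedness from there.
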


\begin{proof}
First we observe that the above series
converges in the weak operator topology.
Indeed, for every pair of vectors $x,y \in H$ we have
\[
\begin{split}
\Big| \Big\langle\big( \sum_{n=N}^{M} f_nT^{*n} B T^n \big)x, y\Big\rangle\Big|
&= 
\Abs{ \sum_{n=N}^{M} f_n \ip{BT^nx}{T^ny} } 
\le 
\sum_{n=N}^{M} \abs{f_n} \norm{B} \norm{T^n x} \norm{T^n y} \\
&\le 
\sum_{n=N}^{M} \abs{f_n} \norm{B} (\norm{T^n x}^2 
+ \norm{T^n y}^2 ) \to 0 \quad (N,M \to \infty),
\end{split}
\]
and the statement follows.
Next, put
\[
f_n^+ := \max\{f_n, 0\}, \quad f_n^- := \max\{-f_n, 0\}.
\]
By the above, the series
\[
\sum_{n=0}^{\infty} f_n^+ T^{*n} B T^n 
\quad \tn{ and } \quad 
\sum_{n=0}^{\infty} f_n^- T^{*n} B T^n
\]
converge in the weak operator topology in $L(H)$. By Lemma~\ref{Halmos-lemma}
these series also converges in SOT. 
Since $f_n = f_n^+ - f_n^-$, we also obtain the convergence
in SOT of the series $\sum f_n T^{*n} B T^n$.
\end{proof}

\begin{defi}
As a consequence of Proposition \ref{prop fnTBT SOT}, for every $f \in \CA_{T,\R}$
and every non-negative operator $B \in L(H)$ we can define
\[
f(T^*,T)(B) := \sum_{n=0}^{\infty} f_n T^{*n} B T^n,
\]
where the convergence is in SOT.
In particular, when $B$ is the identity operator in $L(H)$,
\[
f(T^*,T) = f(T^*,T)(I) = \sum_{n=0}^{\infty} f_n T^{*n} T^n.
\]
\end{defi}

\begin{rem}
Observe that, by applying 
Proposition~\ref{prop upper bound in terms of norm beta in AT} 
to the vector $x = B^{1/2}h$, we get
\begin{equation}\label{eq upper bound for norm of fT*TB}
\norm{f(T^*,T)(B)} \lesssim \norm{B} \norm{f}_{\CA_T}.
\end{equation}
\end{rem}

\begin{lemma}\label{Lemma 24 CAOT}
Let $B \in L(H)$ be a non-negative operator and let $f, g \in \CA_{T,\R}$.
Put $h:= fg$. Then
\begin{enumerate}[\rm (i)]
\item
$h(T^*,T)(B) = g(T^*,T)(f(T^*,T)(B))$;
\item
$h(T^*,T) = g(T^*,T)(f(T^*,T))$.
\end{enumerate}
\end{lemma}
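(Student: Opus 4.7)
The plan is to prove (i) by showing that both sides are bounded self-adjoint operators whose quadratic forms on $H$ coincide; statement (ii) is then immediate by taking $B = I$. The bridge is the absolutely convergent double sum $\sum_{k,j} g_k f_j \ip{B T^{k+j} x}{T^{k+j} x}$, which summed by diagonals $n = k+j$ reproduces the quadratic form of the left-hand side, and summed as an iterated sum with outer index $k$ reproduces the quadratic form of the right-hand side.

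First I would establish absolute convergence of this double series for every $x \in H$. Applying Proposition~\ref{prop upper bound in terms of norm beta in AT} to the vector $T^k x$ yields $\sum_j \abs{f_j} \norm{T^{k+j} x}^2 \le \norm{f}_{\CA_T} \norm{T^k x}^2$, and applying the same proposition, now with $g$ in place of $f$ and $x$ in place of $T^k x$, gives
\[
\sum_{k,j \ge 0} \abs{g_k} \abs{f_j} \abs{\ip{B T^{k+j} x}{T^{k+j} x}}
\le \norm{B} \norm{f}_{\CA_T} \norm{g}_{\CA_T} \norm{x}^2 < \infty.
\]

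Next I would make sense of the composition $g(T^*,T)(f(T^*,T)(B))$. The operator $A := f(T^*,T)(B)$ is bounded and self-adjoint (as a SOT limit of self-adjoint operators), but generally not non-negative, so I extend the scalar action of Proposition~\ref{prop fnTBT SOT} from non-negative to self-adjoint arguments via the canonical decomposition $A = A_+ - A_-$ by declaring $g(T^*,T)(A) := g(T^*,T)(A_+) - g(T^*,T)(A_-)$; both terms are well-defined elements of $L(H)$ by the proposition. Then I would compute, on one hand,
\[
\ip{h(T^*,T)(B) x}{x} = \sum_n h_n \ip{B T^n x}{T^n x}
= \sum_n \Big( \sum_{k+j=n} g_k f_j \Big) \ip{B T^n x}{T^n x},
\]
and on the other, unwinding the definitions,
\[
\ip{g(T^*,T)(A) x}{x} = \sum_k g_k \ip{A T^k x}{T^k x}
= \sum_k g_k \sum_j f_j \ip{B T^{k+j} x}{T^{k+j} x}.
\]
Both iterated sums equal the absolutely convergent double sum of the previous paragraph by Fubini's theorem for scalar series. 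Hence the quadratic forms of the two bounded self-adjoint operators in (i) agree, and the operators themselves coincide.

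The main obstacle will be the definitional subtlety in extending $g(T^*,T)(\cdot)$ to self-adjoint arguments, since the setup in the excerpt records that operator-valued action only for non-negative $B$. Once this is handled, the rest is essentially a Fubini manipulation powered by the $\CA_T$-norm estimate of Proposition~\ref{prop upper bound in terms of norm beta in AT}.
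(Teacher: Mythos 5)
Your proof is correct, and it rests on the same underlying idea as the paper's: reduce (i) to a rearrangement of the double sum $\sum_{k,j} g_k f_j \ip{B T^{k+j}x}{T^{k+j}x}$ at the level of quadratic forms, which determine the bounded operators by polarization. The bookkeeping, however, is organized differently. The paper first treats the case where $f$ and $g$ have non-negative Taylor coefficients --- there the interchange of summation is justified by non-negativity of all terms (a Tonelli-type argument), and the intermediate operator $f(T^*,T)(B)$ is automatically non-negative, so Proposition~\ref{prop fnTBT SOT} applies directly --- and then passes to the general case by linearity via the decompositions $f = f^+ - f^-$, $g = g^+ - g^-$ of the \emph{symbols}. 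You instead establish absolute convergence of the full double series in one stroke, with the explicit bound $\norm{B}\,\norm{f}_{\CA_T}\norm{g}_{\CA_T}\norm{x}^2$ obtained by applying Proposition~\ref{prop upper bound in terms of norm beta in AT} twice, and then invoke Fubini once; the sign issue for the intermediate operator is handled by splitting the \emph{operator} $A = f(T^*,T)(B)$ into its positive and negative parts $A_\pm$ rather than splitting the functions. Both interpretations of the composition yield the same SOT limit of the partial sums $\sum_{k=0}^N g_k T^{*k} A T^k$, so your reading of the right-hand side agrees with the paper's. Your route buys a quantitative norm estimate for the composition and avoids the four-term expansion; the paper's route avoids having to extend the operator-valued calculus beyond non-negative arguments. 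The only small point worth making explicit is that $h = fg$ belongs to $\CA_{T,\R}$ (by Theorem~\ref{Thm A_T Banach algebra}), so that the left-hand side $h(T^*,T)(B)$ is well defined --- though this is in fact already contained in your absolute-convergence estimate summed along diagonals.
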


\begin{proof}
Note that (ii) is just an application of (i) for $B=I$. Let us start proving (i) for the case
where all the coefficients $f_n$ and $g_n$ are non-negative. 
In this case, both parts of (i) are well defined by Proposition \ref{prop fnTBT SOT}. 
Then
\[
\begin{split}
g(T^*,T)(f(T^*,T)(B))
&= \sum_{n=0}^{\infty} g_n T^{*n} \Big( \sum_{m=0}^{\infty} f_m T^{*m} B T^m \Big) T^n \\
&= \sum_{n=0}^{\infty} \sum_{m=0}^{\infty} g_n f_m T^{*n+m} B T^{n+m} \\
& \stackrel{(\star)}{=} \sum_{k=0}^{\infty} \bigg( \sum_{n+m=k} f_m g_n \bigg) T^{*k} B T^k
= h(T^*,T)(B),
\end{split}
\]
where in ($\star$)
all the series are understood in the sense of the SOT convergence.
To justify it, it suffices to pass to quadratic forms and
to use that $f_m$ and $g_n$ are all non-negative. 
Finally, the general case $f,g \in \CA_T$ can be
derived from the previous one by linearity and using the decompositions
\[
f = f^+ - f^-, \quad g = g^+ - g^-,
\]
where $f^+, f^-, g^+$ and $g^-$ have non-negative Taylor coefficients.
\end{proof}

\begin{proof}[Proof of Theorem~\ref{thm quitar alpha tilde}]
By Theorem \ref{thm analog key lemma},
there exists a function $g \in \CA_{T,\R}^{0}$ such that $g \succ 0$ and
$h := \wt{\ga} g \succ 0$.
Then $\ga(t) g(t) = h(t) \al(t)$ and, by Lemma~\ref{Lemma 24 CAOT}~(ii), we get
\begin{equation}
\label{des-operadores}
\sum_{n=0}^{\infty} \al_n \op^{*n} h(\op^*,\op) \op^n =
\sum_{n=0}^{\infty} g_n \op^{*n} \ga(\op^*,\op) \op^n \geq 0.
\end{equation}
Define an operator $B>0$ by $B^2 := h(\op^*,\op) \ge \ep I > 0$ (for some $\ep > 0$).
Then we have
\[
\sum_{n=0}^{\infty} \al_n \norm{B\op^n x}^2 \ge 0
\]
for every $x \in H$. Denote $y := Bx$ and put
\[
T := B\wh{T}B^{-1}.
\]
We get that
\[
\sum_{n=0}^{\infty} \al_n \norm{T^n y}^2 \ge 0
\]
for every $y \in H$. Therefore $T$ is similar to $\wh{T}$ and $\al(T^*, T) \ge 0$.
\end{proof}

In particular this proves Corollary~\ref{cor particular eta}.
The case $a=1$ in this corollary can be compared with
\cite[Theorem~3.10]{Mul88}. Notice, however, that in that theorem by M\"{u}ller
it is assumed that $T$ is a contraction.


\section{A direct proof of Theorem~\ref{thm model a in 01}}
\label{Alternative proof in the Nevannlina-Pick case}

Our proof will be based on approximation in Besov spaces.

\begin{proof}[Proof of Theorem~\ref{thm model a in 01}]
The implication (ii) $\Rightarrow$ (i) is straightforward,
so we focus on the implication (i) $\Rightarrow$ (ii).
Let $T$ be an $a$-contraction. We divide the proof into three steps.
First, we define the operator $S$,
then we prove that $S$ is an isometry,
and finally we prove that $T$ is unitarily equivalent of a part of
$(B_a \otimes I_\mathfrak{D})\oplus S$.

We put $\al(t) := (1-t)^a$ and $k(t) := (1-t)^{-a} \succ 0$.
Recall that $D$ is the non-negative square root of
the operator $(1-t)^a(T^*,T) \ge 0$.

\begin{step1}
Note that
\begin{equation}\label{eq norm Dx}
\norm{Dx}^2 = \sum_{j=0}^{\infty} \al_j \norm{T^jx}^2 \quad \quad (\forall x \in H).
\end{equation}
Changing $x$ by $T^nx$ in \eqref{eq norm Dx} we obtain a more general formula:
\begin{equation}\label{eq norm DTj x}
\norm{DT^nx}^2 = \sum_{j=0}^{\infty} \al_j \norm{T^{j+n}x}^2 
\quad \quad (\forall x \in H, \, \forall n \ge 0).
\end{equation}
Multiplying \eqref{eq norm DTj x} by $k_n$ and summing for $n=0,1,\ldots, N$ 
(for some fixed $N \in \N$), we get the following equation

\begin{equation}\label{eq. B3 1}
\norm{x}^2 =
\sum_{n=0}^{N} k_n \norm{DT^n x}^2 + \sum_{m=N+1}^{\infty} \norm{T^m x}^2 \rho_{N,m}
\quad (\forall x \in H),
\end{equation}
where
\begin{equation}\label{eq defi rho N ell}
\rho_{N,m} = \sum_{n=N+1}^{m} k_n \al_{m-n} = -\sum_{j=0}^{N} k_j \al_{m-j}
\quad \quad (1 \le N+1 \le m).
\end{equation}

Since $0<a<1$, we have that $\al_n < 0$ for every $n \ge 1$ (and $\al_0 = 1$). 
Note that  in the last sum in \eqref{eq defi rho N ell} all the $k_j$'s
are positive and all the $\al_n$'s are negative, because 
$\al_0$ does not appear there. We obtain
that $\rho_{N,m} > 0$.  
Therefore, by \eqref{eq. B3 1} we have
\[
\norm{x}^2 \ge \sum_{n=0}^{N} k_n \norm{DT^n x}^2.
\]
Hence the series with positive terms $\sum k_n \norm{DT^n x}^2$ converges, and taking
limits in \eqref{eq. B3 1} when $N \to \infty$ we obtain
\begin{equation}\label{eq existence of limit with rho N ell}
\exists \lim_{N \to \infty} \sum_{m=N+1}^{\infty} \norm{T^m x}^2 \rho_{N,m} 
= \norm{x}^2 - \sum_{n=0}^{\infty} k_n \norm{DT^n x}^2 \ge 0.
\end{equation}
We are going to define a new semi-inner product on our
Hilbert space $H$ via
\begin{equation}\label{eq. B3 2}
[x,y] := \lim_{N \to \infty} \sum_{m=N+1}^{\infty} \ip{T^m x}{T^m y} \rho_{N,m}.
\end{equation}
By \eqref{eq existence of limit with rho N ell}, $[x,x]$ is correctly defined,
and $[x,x] = \left\langle Ax,x \right\rangle$, where $A$ is a self-adjoint operator with
$0 \le A \le I$. Hence, by the polarization formula $[x,y]$ is correctly defined for any
$x,y \in H$, and $[x,y] = \left\langle Ax,y \right\rangle$.

Let $E:= \left\{ x \in H \, : \, [x,x] = 0 \right\}$.
It is a closed subspace of $H$.
Put $\wh{H} := H / E$.
For any vector $x \in H$, we denote by $\wh{x}$ its 
equivalence class in $\wh{H}$.
Note that $\wh{H} := H / E$ is a new Hilbert
space with norm $\Norm{\cdot}$ given by
\begin{equation}\label{eq. B3 3}
\Norm{\wh{x}}^2 
= \lim_{N \to \infty} \sum_{m=N+1}^{\infty} \norm{T^m x}^2 \rho_{N,m} 
= \norm{x}^2 - \sum_{n=0}^{\infty} k_n \norm{DT^n x}^2.
\end{equation}
We set $S$ to be the operator on $\wh{H}$, 
given by $S \wh{x} := \wh{Tx}$ for every $x \in H$.
\end{step1}

\begin{step2}
Let us see now that the equality
$\Norm{\wh{Tx}} = \Norm{\wh{x}}$ holds for every $x \in H$.
Observe that this will imply, in particular, that $S$ is well-defined.

Indeed, note that
\begin{equation}\label{eq. B3 4}
\sum_{n=0}^{\infty} k_n \norm{DT^n x}^2 
= \lim_{r \to 1^-} \sum_{n=0}^{\infty} r^n k_n \norm{DT^n x}^2
\end{equation}
since the RHS is an increasing function of $r$.
Using \eqref{eq norm Dx}, \eqref{eq. B3 3} and \eqref{eq. B3 4} we obtain that
\begin{equation}\label{eq. B3 5}
\begin{split}
\Norm{\wh{x}}^2 &= \norm{x}^2 - \lim_{r \to 1^-} \sum_{n=0}^{\infty} r^n k_n \norm{DT^n x}^2 \\
&= 
\norm{x}^2 - \lim_{r \to 1^-} \sum_{n=0}^{\infty} 
\sum_{j=0}^{\infty} r^n k_n \al_j \norm{T^{n+j}x}^2 \\
&= 
\norm{x}^2 - \lim_{r \to 1^-} \sum_{m=0}^{\infty} \norm{T^m x}^2 u_r(m),
\end{split}
\end{equation}
where
\begin{equation}\label{eq. B3 7}
u_r(m) := \sum_{n+j = m; \, n,j\ge 0}\; r^n k_n \al_j. 
\end{equation}
For each $m$, $u_r(m)$ is a continuous function of $r \in [0,1]$. 
We also have  $u_1(m) = 0$ for $m \ge 1$ and $u_r(0) = 1$ for $r \in [0,1]$. 
Moreover, since $u_r(1) = a(1-r)$, we have $u_r(1) \to 0$ as $r \to 1^-$. 
Therefore
\begin{equation}\label{eq. B3 8}
\Norm{\wh{Tx}}^2 - \Norm{\wh{x}}^2 
= \lim_{r \to 1^-} \sum_{m=2}^{\infty} \norm{T^m x}^2 [u_r(m) - u_r(m-1)].
\end{equation}
We have to prove that it is zero for any $x \in H$.

\begin{claims}
There exists a constant $C > 0$ independent of $r$ and $m$ such that
\begin{equation}\label{eq. B3 9}
\abs{u_r(m) - u_r(m-1)} \le \frac{C}{m^{1+a}} \quad (\forall m \ge 2).
\end{equation}
\end{claims}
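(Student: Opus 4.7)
The idea is to recognize $u_r(m)-u_r(m-1)$ as the $m$-th Taylor coefficient of
\[
w_r(t) := (1-t)^{1+a}(1-rt)^{-a},
\]
which immediately handles $r=1$ since $w_1(t)=1-t$. For $r\in[0,1)$, the branch cuts of $(1-t)^{1+a}$ and $(1-rt)^{-a}$ have opposite orientations and the combined discontinuity cancels on $[1/r,\infty)$, leaving $w_r$ single-valued outside the compact cut $[1,1/r]$. Deforming the Cauchy contour around this cut and making the substitution $s=1+(1-r)w/r$ yields the explicit integral formula
\[
\bigl\lvert [t^m]w_r(t)\bigr\rvert = \frac{\sin(\pi a)}{\pi}\,(1-r)^{2}\,r^{m-a-1}\int_0^1 \frac{w^{1+a}(1-w)^{-a}}{(r+(1-r)w)^{m+1}}\,dw.
\]

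To bound this uniformly in $r$, the plan is to split into regimes. If $r\le 1/2$, then $w_r$ is analytic in a disk of radius $\ge 2$ and a Cauchy estimate on $\lvert t\rvert=3/2$ gives geometric decay $\lvert[t^m]w_r\rvert\lesssim (2/3)^m\lesssim 1/m^{1+a}$. If $r\in(1/2,1)$ with $m(1-r)\le 1$, then $(r+(1-r)w)^{m+1}\ge (1-1/m)^{m+1}\gtrsim 1$ uniformly in $w\in[0,1]$, so the inner integral is bounded by $B(2+a,1-a)$ and $\lvert[t^m]w_r\rvert\lesssim (1-r)^{2}\le 1/m^{2}\le 1/m^{1+a}$. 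If $r\in(1/2,1)$ with $m(1-r)>1$, a Laplace-type saddle-point analysis near the critical point $w_{*}:=(1+a)r/((1-r)m)\in(0,1)$ of the exponent $(m+1)\log(r+(1-r)w)-(1+a)\log w$ shows that the inner integral is of order $r^{-(m-a-1)}/((1-r)^{2+a}m^{2+a})$; the prefactor then produces $\lvert[t^m]w_r\rvert\lesssim (1-r)^{-a}/m^{2+a}$, and since $(1-r)\ge 1/m$ and $a<1$ this gives $(1-r)^{-a}\le m^{a}$ so the bound becomes $\lesssim 1/m^{2}\le 1/m^{1+a}$.

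The main obstacle will be executing the Laplace step cleanly, with implicit constants independent of $r\in(1/2,1)$. A technically simpler substitute is to split $[0,1]$ directly at $w_0:=1/(m(1-r))$: on $[0,w_0]$ the trivial bound $r+(1-r)w\ge r$ controls the contribution by $w_0^{2+a}/r^{m+1}$, while on $[w_0,1]$ the substitution $\tau=r+(1-r)w\in[r+1/m,1]$ reduces the integral to $\int_{r+1/m}^{1}(\tau-r)^{1+a}(1-\tau)^{-a}\tau^{-m-1}\,d\tau/(1-r)^{2}$, which is handled by tracking the competition between $(\tau-r)^{1+a}$, $(1-\tau)^{-a}$, and $\tau^{-m-1}$. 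Either route yields the same two bounds $(1-r)^{2}$ and $(1-r)^{-a}/m^{2+a}$, which together cover $r\in[0,1]$ because $a<1$ forces $m^{-(1+a)/2}\ge m^{-1/a}$, closing the argument.
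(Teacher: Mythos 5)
Your starting point is sound and genuinely different from the paper's. You correctly identify $u_r(m)-u_r(m-1)$ as the $m$-th Taylor coefficient of $w_r(t)=(1-t)^{1+a}(1-rt)^{-a}$ (the paper's $f_r$), and the contour deformation around the cut $[1,1/r]$ together with the substitution $s=1+(1-r)w/r$ does give the stated integral representation; the final numerology ($(1-r)^2$ in one regime, $(1-r)^{-a}m^{-2-a}$ in the other, each $\le m^{-2}\le m^{-1-a}$ because $a<1$) is consistent with what one gets from a careful Laplace analysis. The paper instead bounds the same coefficients by proving the uniform Besov-type estimate $\norm{I_{1-a}f_r''}_{H^1}\le C$ via the area-integral characterization of $I_\beta h\in H^1$, reducing everything to $\abs{f_r''(t)}\lesssim \abs{1-t}^{-1}$ and $\sum n^{a-1}t^n\in H^1$. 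Your route is more explicit but requires a genuine saddle-point computation; the paper's avoids that at the cost of importing the $H^1$/Besov machinery.

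There are two gaps. First, the case $r\le 1/2$ is wrong as written: $(1-t)^{1+a}$ has a branch point at $t=1$ for non-integer $1+a$, so $w_r$ is \emph{never} analytic on $\abs{t}=3/2$, and the conclusion of geometric decay is false --- already at $r=0$ one has $[t^m]w_0=k^{-(1+a)}(m)\asymp m^{-2-a}$. The bound you need does hold in this regime (e.g.\ write $[t^m]w_r$ as the convolution of the $O((j+1)^{-2-a})$ coefficients of $(1-t)^{1+a}$ with the coefficients $k^a(n)r^n=O(2^{-n})$ of $(1-rt)^{-a}$ and split the sum at $n=m/2$), but your Cauchy-estimate argument must be replaced. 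Second, the regime $m(1-r)>1$ --- which is the heart of the matter --- is only a plan: the Laplace step is not executed, and the proposed elementary substitute does not close either. On $[w_0,1]$ the crude bounds $(\tau-r)^{1+a}\le(1-r)^{1+a}$, $\int(1-\tau)^{-a}d\tau\lesssim(1-r)^{1-a}$ and $\tau^{-m-1}\le(r+1/m)^{-m-1}$ give, after multiplying by the prefactor, only $(1-r)^2\,r^{-a-2}\bigl(1+\tfrac{1}{rm}\bigr)^{-(m+1)}$, and $\bigl(1+\tfrac{1}{rm}\bigr)^{m+1}$ is merely bounded below by a constant, so no decay in $m$ is produced when $1-r$ is of order $1$. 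You need a finer (say dyadic in $\tau-r$) decomposition, or an honest Laplace estimate with constants tracked uniformly in $r$, before the claim is proved. So the strategy is viable, but as it stands the proof is not complete.
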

Indeed,
it is easy to check that
\[
\sum_{m=0}^{\infty} u_r(m) t^m = \frac{(1-t)^a}{(1-rt)^a} 
\quad \quad (\abs{t} < 1 \tn{ and } 0<r<1).
\]
Multiplying by $(1-t)$, we get
\[
1 + \sum_{m=1}^{\infty} [u_r(m) - u_r(m-1)] t^m = \frac{(1-t)^{a+1}}{(1-rt)^a} =: f_r(t).
\]
Hence \eqref{eq. B3 9} is equivalent to
\begin{equation}\label{eq. B3 10}
\abs{\wh{f}_r(n)} \leq \frac{C}{(n+1)^{1+a}} \quad (\forall n),
\end{equation}
where the constant $C$ has to be 
independent of $r$ and $n$. 
Equivalently, we want to prove that the Fourier coefficients of
\begin{equation}\label{eq. B3 11}
\sum_{n=0}^{\infty} (1+n)^{1+a} \wh{f}_r(n) z^n = I_{-1-a} f_r
\end{equation}
are uniformly bounded, where for $\be \in \R$,
\[
I_\be h (z) := \sum_{j=0}^{\infty} (1+j)^{-\be} \wh{h}(j) z^j,
\]
as in \cite[p. 737]{Pel03}.
We will prove the even stronger result
\begin{equation}\label{eq. B3 12}
\norm{I_{1-a} f_{r}^{''}}_{H^1} \le C,
\end{equation}
where the constant $C$ does not depend on $r$.
Here, $H^p$ denotes the classical Hardy space of the unit disc.
Since for any $\be>0$
\begin{equation}\label{eq. B4 1}
I_\be h \in H^1 
\quad \Longleftrightarrow \quad 
\int_{\T} \left( \int_{0}^{1} \abs{h(\rho \zeta)}^2 (1-r)^{2\be-1} \, d{\rho}  \right)^{1/2} 
\, \abs{d{\zeta}} < \infty
\end{equation}
(see \cite[p. 737]{Pel03}), we obtain that \eqref{eq. B3 12} is equivalent to
\begin{equation}\label{eq. B3 13}
\sup_{r\in[0,1]}
\int_{\T} \left[ \int_{0}^{1} \abs{f_{r}^{''}(\rho \zeta)}^2 (1-\rho)^{1-2a} \, d\rho \right]^{1/2} 
\, \abs{d{\zeta}} \le C.
\end{equation}
It is immediate to check that $f_{r}^{''}$ can be represented as
\[
f_{r}^{''}(t) = \sum_{j=0}^{2} c_j(r) (1-t)^{a-1+j} (1-rt)^{-a-j},
\]
where the $c_j$'s are bounded functions. Using that $\abs{1-rt} \ge M \abs{1-t}$
for a certain constant $M$ (for $\abs{t} < 1$ and $0<r<1$), we obtain that
\[
\abs{f_{r}^{''}(t)} \le C_1 \abs{1-t}^{-1}.
\]
Therefore we just need to prove that
\begin{equation}\label{eq. B3 14}
\int_{\T} \left[ \int_{0}^{1} \abs{1-\rho\zeta}^{-2} (1-\rho)^{1-2a} \, d\rho \right]^{1/2} 
\, \abs{d{\zeta}} < \infty
\end{equation}
because now we do not have dependence on $r$.
By \eqref{eq. B4 1}, this is equivalent to
\[
I_{1-a} g \in H^1, \quad \tn{where} \quad  g(t) = (1-t)^{-1}.
\]
Hence it remains to check that
\[
\sum_{n=0}^{\infty} (n+1)^{a-1} t^n \in H^1.
\]
For the sake of an easier notation, we will prove an equivalent statement
\begin{equation}\label{eq inverse 1 - t in H1}
\sum_{n=1}^{\infty} n^{a-1} t^n \in H^1.
\end{equation}
Recall that $k(t) = (1-t)^{-a}$. 
Then, by \cite[Vol. I, p. 77 (1.18)]{Zygmund} we have
\[
k_n = \dfrac{1}{\Gamma(a)} n^{a-1} \left( 1 + O \left( \dfrac{1}{n} \right) \right).
\]
Therefore
\[
n^{a-1} = \Gamma(a) k_n - n^{a-1} v_n, \quad \tn{ where } \abs{v_n} \lesssim n^{-1}.
\]
Since $0<a<1$, we know that $\sum k_n t^n = (1-t)^{-a}$ belongs to $H^1$. 
Since the function $\sum n^{a-1} v_n t^n$ belongs to $H^2$, 
\eqref{eq inverse 1 - t in H1} follows.
This finishes the proof of the claim.

Finally, since $T$ is an $a$-contraction, we know that
\begin{equation}\label{eq series al_n sim to}
\sum_{n=1}^{\infty} \frac{1}{n^{1+a}} \norm{T^n x}^2
\, \,  \sim \, \,
\sum_{n=0}^{\infty} |\al_n| \norm{T^n x}^2
\quad
\tn{converges for every } x \in H.
\end{equation}
By the claim, we can estimate the series in the right hand side of
\eqref{eq. B3 8} as
\[
\sum_{m=2}^{\infty} \norm{T^m x}^2 \abs{u_r(m) - u_r(m-1)}
\lesssim
\sum_{m=2}^{\infty} \dfrac{\norm{T^m x}^2}{m^{1+a}} < \infty.
\]
Hence, Lebesgue's Dominated Convergence Theorem allows us to exchange
the limit with the sign of sum in \eqref{eq. B3 8}. Using 
that $u_1(j)=0$ for every $j \ge 1$,
we obtain that $\Norm{\wh{Tx}}^2 = \Norm{\wh{x}}^2$ for any $x \in H$.
Hence $S$ is well defined and it is an isometry.
\end{step2}

\begin{step3}
As usual, $\FD$ is the closure of $DH$.
Let
\[
G : H \to (\CH_a \otimes \FD) \oplus \wh{H}, \quad Gx := (\{Dx,DTx, DT^2x, \ldots \}, \wh{x}).
\]
By \eqref{eq. B3 3}, $G$ is an isometry. It is immediate that
\[
((B_a \otimes I_\FD) \oplus S) G = G T.
\]
Hence we get that $T$ is unitarily equivalent to a part of 
$(B_a \otimes I_\FD) \oplus S$. \qedhere
\end{step3}
\end{proof}

In Section~\ref{Section a bigger than 1}, we will see that an analogue of 
Theorem~\ref{thm model a in 01} is false if $a>1$.


\section{Inclusions for classes of $a$-contractions and $a$-isometries}
\label{Section Inclusions}

In this section we prove 
Theorem~\ref{thm consequence Q1} and Theorem~\ref{thm consequence Q2}.
Let us begin with the following three results given in \cite{Kut57} by Kuttner.

\begin{thm}
[{\cite[Theorem 3]{Kut57}}]
Let $\si > -1$, $\si$ not an integer.
Let $\La = \{ \La_n \}_{n \ge 0}$ be a sequence of real numbers.
Suppose that
$(1-\nabla)^\si \La$
is well defined. If
\begin{equation}\label{eq Kuttner 1}
s \ge \si, \quad r+s > \si, \quad \tau \ge \si-r, \quad \tau \ge 0,
\end{equation}
or if
\begin{equation}\label{eq Kuttner 2}
s \ge \si, \quad r+s = \si, \quad \tau > \si-r, \quad \tau \ge 0,
\end{equation}
then
\[
(1-\nabla)^r [(1-\nabla)^s \La]_n
=
\sum_{j=0}^{\infty} k^{-r}(j) \left( \sum_{m=0}^{\infty} k^{-s}(m) \La_{m+j+n} \right)
\]
is summable $(C,\tau)$ to $(1-\nabla)^{r+s} \La_n$.
\end{thm}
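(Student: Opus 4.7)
The plan is to build the identity on the convolution structure of the Cesàro numbers. The crucial algebraic fact is that the Taylor coefficients of $(1-t)^r(1-t)^s=(1-t)^{r+s}$ satisfy
\[
\sum_{j+m=N} k^{-r}(j)\,k^{-s}(m) \;=\; k^{-(r+s)}(N),
\]
so at the level of formal power series the iterated difference $(1-\nabla)^r[(1-\nabla)^s\La]_n$ is manifestly equal to $(1-\nabla)^{r+s}\La_n$. The substance of the theorem is therefore analytic: one must show that the formal rearrangement can be realised through an iterated summation, and Cesàro means of order $\tau$ are the natural tool because the individual series need not converge in the ordinary sense when $r+s$ is close to $\si$.

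First I would record the asymptotics $k^{-a}(j)\sim j^{-a-1}/\Gamma(-a)$ for non-integer $a>-1$, and the standard toolkit of Cesàro summability: monotonicity of $(C,\tau)$ in $\tau$, the behaviour of the means under shift of index, and the criterion that a sequence of size $O(n^{\tau-\si})$ whose partial sums are $(C,\si)$-bounded is $(C,\tau)$-summable. The hypothesis that $(1-\nabla)^\si\La$ be well defined is then interpreted as an averaged growth bound on $\La_n$ of order $n^{\si+1}$, which governs every tail estimate below.

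Fix $n$ and set $\Phi(j):=\sum_{m=0}^{\infty}k^{-s}(m)\La_{m+j+n}$; the inequality $s\ge\si$ combined with well-definedness of $(1-\nabla)^\si\La$ ensures that the inner series converges for each $j$. Let $T_N:=\sum_{j=0}^{N}k^{-r}(j)\Phi(j)$ be the $N$-th partial sum of the outer series, split each $\Phi(j)$ as $\sum_{m=0}^{N-j}k^{-s}(m)\La_{m+j+n}$ plus the tail $\sum_{m>N-j}k^{-s}(m)\La_{m+j+n}$, and change variables $M=m+j$ in the main part. By the convolution identity above this main part equals $\sum_{M=0}^{N}k^{-(r+s)}(M)\La_{M+n}$, i.e.\ the desired $N$-th partial sum for $(1-\nabla)^{r+s}\La_n$. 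So the proof reduces to showing that the accumulated tail
\[
R_N \;=\; \sum_{j=0}^{N}k^{-r}(j)\sum_{m>N-j}k^{-s}(m)\La_{m+j+n}
\]
is $(C,\tau)$-summable to zero.

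The main obstacle, and the source of the two regimes in \eqref{eq Kuttner 1}--\eqref{eq Kuttner 2}, is the analysis of this tail. Exchanging the order of summation writes $R_N=\sum_{M>N}\La_{M+n}W_N(M)$ with weights $W_N(M)=\sum_{j=0}^{N}k^{-r}(j)k^{-s}(M-j)$, and plugging in the asymptotics of $k^{-r}$ and $k^{-s}$ yields bounds on $W_N(M)$ of the form $M^{-r-s-1}$ times a correction depending on $N/M$. Applying the Cesàro operator of order $\tau$, a careful power count against the growth $\La_{M+n}=O(M^{\si+1})$ shows that vanishing holds exactly when $\tau\ge\si-r$ in the strict case $r+s>\si$ and when $\tau>\si-r$ in the borderline case $r+s=\si$, while $\tau\ge 0$ is needed simply to make $(C,\tau)$ meaningful. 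This boundary power-counting, which is precisely where Kuttner's sharp estimates on $k^{-a}(j)$ and their partial convolutions enter, is the technical heart of the argument; once it is done, combining the identity for the main part with the vanishing of $R_N$ under $(C,\tau)$ delivers the stated summability to $(1-\nabla)^{r+s}\La_n$.
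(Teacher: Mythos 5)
First, note that the paper does not prove this statement at all: it is imported verbatim from Kuttner \cite{Kut57} (Theorem 3 there) and used as a black box, so there is no internal proof to compare against; your proposal has to be judged as a proof of Kuttner's theorem on its own terms.

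As such, it has a genuine gap. The Cauchy-product decomposition of the partial sums --- main part $\sum_{M=0}^{N}k^{-(r+s)}(M)\La_{M+n}$ plus accumulated tail $R_N$ --- is the right skeleton, but everything that makes the theorem true is concentrated in the assertion that $R_N$ is $(C,\tau)$-null, and the mechanism you propose for it cannot work as stated. From the hypothesis that $(1-\nabla)^{\si}\La$ is well defined you can extract at best the pointwise bound $\La_M=o(M^{\si+1})$ (from the terms of a convergent series tending to $0$), and a power count against that bound is too crude: with $W_N(M)\approx N^{-r}M^{-s-1}$ for $M\gg N$ one gets $R_N=o(N^{\si+1-r-s})$, which forces $r+s\ge\si+1$ rather than the stated $r+s>\si$ with $\tau\ge\si-r$, and in particular cannot distinguish the borderline case \eqref{eq Kuttner 2} from \eqref{eq Kuttner 1}. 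To reach the sharp conditions one must use the actual \emph{convergence} of $\sum_m k^{-\si}(m)\La_{m+n}$, not just the size of its terms --- typically by Abel summation, rewriting $\La_{m+n}$ in terms of the tails $\rho_j:=\sum_{m\ge j}k^{-\si}(m)\La_{m+n}\to 0$ and then estimating the resulting convolved kernels; this is where Kuttner's delicate estimates live, and it is entirely absent from your sketch. A second, smaller gap: for $\si\le s\le\si+1$ the inner series $\Phi(j)=\sum_m k^{-s}(m)\La_{m+j+n}$ is not absolutely convergent under the pointwise bound alone, so even the claim that $\Phi(j)$ is well defined already requires the same summation-by-parts argument and is not "ensured" by $s\ge\si$ as you assert. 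In short, the proposal correctly reformulates the problem but does not prove it.
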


Recall that a sequence $\{ s_n \}_{n \ge 0}$ is \emph{summable $(C,\tau)$} to a limit $s$ if 
the sequence of its \emph{Cesàro means of order $\tau$}
\[
C_\tau s_n := \dfrac{1}{k^{\tau+1}(n)}\sum_{j=0}^{n} k^\tau(n-j) s_j
\]
converges to $s$ as $n$ goes to infinity. 
Summability $(C,0)$ is ordinary convergence, and summability $(C,\tau)$ of a sequence 
implies summability $(C,\tau')$ to the same limit for any $\tau' > \tau$. 

\begin{thm}
[{\cite[Theorem A]{Kut57}}]
\label{Kuttner Thm A}
Let $s > -1$ and $r \ge 0$. 
If $\La = \{\La_n\}_{n \ge 0}$ is a sequence of real numbers, 
then
\begin{equation*}
(1-\nabla)^{r+s} \La_n = (1-\nabla)^r [(1-\nabla)^s \La]_n
\end{equation*}
for every $n \ge 0$, 
whenever the right hand side is well defined.
\end{thm}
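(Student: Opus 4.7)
The plan is to exploit the formal power series identity $(1-t)^r (1-t)^s = (1-t)^{r+s}$, which translates into the convolution identity
\[
k^{-(r+s)}(m) = \sum_{i=0}^{m} k^{-r}(i)\, k^{-s}(m-i)
\]
among the Cesàro numbers. Accordingly, I would first write the right-hand side as an iterated sum
\[
(1-\nabla)^r [(1-\nabla)^s \La]_n = \sum_{i=0}^{\infty} k^{-r}(i) \sum_{j=0}^{\infty} k^{-s}(j)\, \La_{n+i+j},
\]
regroup terms with $m = i+j$ fixed, and interchange the order of summation. If the rearrangement is legitimate, the inner coefficient of $\La_{n+m}$ collapses via the convolution identity to $k^{-(r+s)}(m)$, and the double sum becomes $(1-\nabla)^{r+s}\La_n$, as required.

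Next I would dispatch the easy cases. When $r = 0$ the identity is tautological. When $r$ is a positive integer, $(1-\nabla)^r$ is a finite-order forward-difference operator, the outer sum is really finite, and the identity reduces to a telescoping computation verifiable by induction on $r$. Symmetrically, when $s$ is a non-negative integer, $(1-\nabla)^s \La$ is a finite linear combination of shifts of $\La$, and one may apply $(1-\nabla)^r$ termwise without any convergence issue.

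The main obstacle is justifying the interchange of summations in the remaining case, where $s > -1$ is non-integer and $r > 0$ is non-integer. In this regime the coefficients $k^{-s}(j)$ need not be absolutely summable (e.g.\ when $-1 < s < 0$ one has $k^{-s}(j) \sim c\, j^{-s-1}$), so Fubini is unavailable directly. To get around this I would invoke the preceding theorem, \cite[Theorem~3]{Kut57}, with $\si = s$: since $r > 0$, condition~\eqref{eq Kuttner 1} is met by taking $\tau \ge \max\{0, s-r\}$, and the conclusion is that the iterated series is $(C,\tau)$-summable to $(1-\nabla)^{r+s}\La_n$.

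Finally, to conclude, I would use the standard fact that ordinary convergence implies $(C,\tau)$-summability to the same value. The hypothesis of the theorem is precisely that the right-hand side converges as an ordinary iterated sum, so its ordinary value must coincide with the Cesàro limit $(1-\nabla)^{r+s}\La_n$, which is the left-hand side. This closes the argument and pins down the proof's only genuinely analytic step (the summability lemma) to a citation of Kuttner's earlier result.
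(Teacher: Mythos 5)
A preliminary remark: the paper offers no proof of this statement at all --- it is quoted verbatim from Kuttner's paper --- so there is no in-house argument to compare yours against, and I can only assess the proposal on its own terms. Your main case is handled correctly: for $r>0$ and $s$ both non-integer, taking $\si=s$ in Kuttner's Theorem~3 (the first set of conditions \eqref{eq Kuttner 1} is satisfied with $\tau=\max\{0,s-r\}$, and the hypothesis that the right-hand side is well defined gives in particular that $(1-\nabla)^{s}\La$ exists) shows that the iterated series is $(C,\tau)$-summable to $(1-\nabla)^{r+s}\La_n$; since ordinary convergence implies $(C,\tau)$-summability to the same value for $\tau\ge 0$ and Cesàro limits are unique, the ordinary sum of the right-hand side must equal $(1-\nabla)^{r+s}\La_n$. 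The cases $r=0$ and $r$ a positive integer are indeed elementary, the outer sum being finite.

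The gap is in your remaining ``easy'' case, where $s$ is a non-negative integer and $r>0$ is not an integer. The assertion that one may apply $(1-\nabla)^r$ termwise to the finite linear combination $(1-\nabla)^s\La$ ``without any convergence issue'' presupposes that each series $\sum_i k^{-r}(i)\La_{n+i+j}$, $0\le j\le s$, converges separately, and this does not follow from the hypotheses. Concretely, take $s=1$, $0<r<1$ and $\La_i=i^{\,r+1/2}$: then $\La_{n+i}-\La_{n+i+1}=O(i^{\,r-1/2})$ while $\abs{k^{-r}(i)}\asymp (i+1)^{-r-1}$, so the right-hand side $\sum_i k^{-r}(i)(\La_{n+i}-\La_{n+i+1})$ converges absolutely and is well defined, yet $\sum_i \abs{k^{-r}(i)}\,\La_{n+i}\asymp\sum_i i^{-1/2}$ diverges, and since $k^{-r}(i)$ has constant sign for $i\ge 1$ the individual series genuinely diverge. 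The identity is of course still true here, but it requires an actual argument --- for instance, regrouping the partial sums of $\sum_m k^{-(r+s)}(m)\La_{n+m}$ by $m=i+j$ and proving that the resulting boundary terms $k^{-r}(i)\La_{n+i+j}$ (with $i$ within $s$ of the truncation point) tend to zero, which is a nontrivial Tauberian-type step. Nor can this case be absorbed into your appeal to Theorem~3, which requires $\si$ non-integer while the only difference of $\La$ known to exist is the integer-order one. So the proposal proves the theorem only under the additional assumption that $r$ is an integer or $s$ is not; for the full statement you must either close this case or simply cite Kuttner, as the paper does.
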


\begin{thm}
[{\cite[Theorem B]{Kut57}}]
\label{Kuttner Thm B}
Let $s > -1, r+s > -1$ and $r+s$ be non-integer.
If $\La = \{\La_n\}_{n \ge 0}$ is a sequence of real numbers, then
\begin{equation*}
(1-\nabla)^{r+s} \La_n = (1-\nabla)^r [(1-\nabla)^s \La]_n
\end{equation*}
for every $n \ge 0$, whenever both sides above are well defined.
\end{thm}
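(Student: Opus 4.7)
The plan is to reduce the identity to one of the two already-stated summability results depending on the sign of $r$.

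If $r\ge 0$, Theorem~\ref{Kuttner Thm A} applies verbatim: its hypotheses ($s>-1$ and $r\ge 0$) are a subset of ours, and it yields the identity whenever the iterated right-hand side is well defined, which is part of the assumption of Theorem~\ref{Kuttner Thm B}. So this case is immediate.

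The substantive case is $r<0$. I would invoke Theorem~3 above with the choice $\si:=r+s$; by hypothesis $\si>-1$ and $\si$ is non-integer, so it is an admissible exponent, and $(1-\nabla)^\si\La=(1-\nabla)^{r+s}\La$ is well defined by hypothesis of Theorem~\ref{Kuttner Thm B}. Next I would check that we fall inside condition~\eqref{eq Kuttner 2}: the inequality $s\ge\si$ rewrites as $-r\ge 0$, which holds since $r<0$; $r+s=\si$ is immediate; and picking any $\tau>\max(s,0)$ ensures both $\tau>\si-r=s$ and $\tau\ge 0$. Theorem~3 then produces that the iterated series
\[
(1-\nabla)^r\,[(1-\nabla)^s\La]_n=\sum_{j=0}^{\infty}k^{-r}(j)\sum_{m=0}^{\infty}k^{-s}(m)\La_{m+j+n}
\]
is summable $(C,\tau)$ to $(1-\nabla)^{r+s}\La_n$.

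To finish, I would exploit the assumption that both sides of the identity in Theorem~\ref{Kuttner Thm B} are well defined: this means the outer series above converges classically to $(1-\nabla)^r[(1-\nabla)^s\La]_n$. Classical convergence implies $(C,\tau)$-summability to the same limit for every $\tau\ge 0$, so by the uniqueness of Cesàro limits the classical value must coincide with $(1-\nabla)^{r+s}\La_n$, which is the desired identity.

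The only serious obstacle is Theorem~3 itself, whose proof demands the full Cesàro-summability machinery of Kuttner's paper to tame the non-absolutely convergent iterated sums that arise when $\sum_{j,m}k^{-r}(j)\,k^{-s}(m)$ fails to converge absolutely. Granting Theorem~3, the reduction above is routine and reduces to checking conditions~\eqref{eq Kuttner 2} for the choice $\si=r+s$ and a sufficiently large $\tau$.
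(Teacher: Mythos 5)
The paper offers no proof of this statement to compare against: it is imported verbatim from Kuttner \cite{Kut57}, just like Theorem~\ref{Kuttner Thm A} and the first quoted result (Kuttner's Theorem~3). Your derivation of Theorem~\ref{Kuttner Thm B} from those two quoted facts is correct. For $r\ge 0$ the claim is indeed a special case of Theorem~\ref{Kuttner Thm A}. For $r<0$, the choice $\si=r+s$ meets the hypotheses of Kuttner's Theorem~3 ($\si>-1$ and non-integer by assumption, and $(1-\nabla)^{\si}\La$ well defined because it is the left-hand side, assumed to exist), and condition~\eqref{eq Kuttner 2} holds with any $\tau>\max(s,0)$ since $s\ge\si$ reduces to $r\le 0$ and $\si-r=s$. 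The closing step is also sound: convergence of the iterated series is $(C,0)$-summability, hence $(C,\tau)$-summability to the same value, and a $(C,\tau)$-limit is unique, so the two values must coincide; the existence of the inner sums $(1-\nabla)^{s}\La_{j+n}$, needed for Theorem~3's conclusion to parse, is supplied by the hypothesis that the right-hand side is well defined. Two caveats. This is essentially how Kuttner himself deduces his Theorem~B from his summability theorems, so the route is faithful to the cited source rather than genuinely new; and, as you acknowledge, all the analytic content sits in Theorem~3, which you (like the paper) treat as a black box, so what you have is a clean reduction rather than a self-contained proof.
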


Now we give an answer to Question A from the Introduction.
As a consequence, we will derive Theorem~\ref{thm consequence Q1}.

\begin{thm}\label{thm ans to Q1}
Let $\La = \{ \La_n \}_{n \ge 0}$ be a sequence of real numbers,
and let $0<b<a$, where $b$ is not an integer. 
If $(1-\nabla)^a \La_n \ge 0$ for every $n \ge 0$
and $(1-\nabla)^b \La$ is well defined, 
then $(1-\nabla)^b \La_n \ge 0$ for every $n \ge 0$.
\end{thm}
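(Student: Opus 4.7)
The plan is to factor $b = a + (-c)$ with $c := a - b > 0$ and use Kuttner's Theorem~3 above to represent $(1-\nabla)^b \La_n$ as the sum of a series whose terms are all non-negative.

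Concretely, I would apply Theorem~3 with $\sigma := b$, $s := a$, and $r := -c$, using the second set of hypotheses~\eqref{eq Kuttner 2}: indeed $\sigma = b$ is non-integer and $\sigma > -1$, $s = a \geq b = \sigma$, $r + s = b = \sigma$, and any choice $\tau > a = \sigma - r$ gives $\tau \geq 0$. Since the inner sum in Theorem~3 reduces to $\sum_m k^{-a}(m) \La_{m+j+n} = [(1-\nabla)^a \La]_{j+n}$, which is well defined and non-negative by hypothesis, the theorem asserts that the partial sums
\begin{equation*}
S_N := \sum_{j=0}^{N} k^{c}(j) \, [(1-\nabla)^a \La]_{j+n}
\end{equation*}
are Cesàro $(C,\tau)$-summable to $(1-\nabla)^b \La_n$.

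The next step is a Tauberian observation. Since $c > 0$, the Cesàro numbers $k^{c}(j)$ are strictly positive, and combined with the hypothesis $(1-\nabla)^a \La \geq 0$ this forces $S_N$ to be non-decreasing. For such a sequence, $(C,\tau)$-summability to a finite limit is equivalent to ordinary convergence to the same limit. Indeed, using the identity $\sum_{i=0}^{M} k^{\tau}(i) = k^{\tau+1}(M)$ together with monotonicity of $S_N$ one obtains the lower bound
\begin{equation*}
C_\tau S_N \geq \frac{k^{\tau+1}(\lceil N/2 \rceil)}{k^{\tau+1}(N)} \, S_{\lfloor N/2 \rfloor} \asymp S_{\lfloor N/2 \rfloor},
\end{equation*}
so $S_N \to \infty$ would force $C_\tau S_N \to \infty$, contradicting the finiteness of $(1-\nabla)^b \La_n$ granted by the standing assumption that $(1-\nabla)^b \La$ is well defined. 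Hence $S_N$ converges in the ordinary sense and, by regularity of the Cesàro means, its limit must equal $(1-\nabla)^b \La_n$. Since every term of $S_N$ is non-negative, one concludes
\begin{equation*}
(1-\nabla)^b \La_n = \sum_{j=0}^{\infty} k^{c}(j) \, [(1-\nabla)^a \La]_{j+n} \geq 0.
\end{equation*}

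The main obstacle is precisely this Tauberian passage: Kuttner's Theorem~3 in the regime $r < 0$ only delivers $(C,\tau)$-summability, so one cannot apply Theorem~B directly (which would require the iterated sum to be a priori well defined in the ordinary sense). It is the sign hypothesis, transmitted into monotonicity of the outer partial sums through the positivity of the weights $k^{c}(j)$, that unlocks the standard Tauberian principle for monotone sequences and closes the argument.
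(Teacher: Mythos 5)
Your proposal is correct and follows essentially the same route as the paper: the same application of Kuttner's Theorem~3 via \eqref{eq Kuttner 2} with $\si=b$, $s=a$, $r=b-a$ (the paper takes $\tau=[a]+1$), followed by the observation that every term $k^{a-b}(j)\,[(1-\nabla)^a\La]_{j+n}$ of the outer series is non-negative. The paper then concludes directly, since non-negative partial sums have non-negative Ces\`{a}ro means and hence a non-negative $(C,\tau)$-limit; your extra Tauberian step upgrading $(C,\tau)$-summability to ordinary convergence is valid but not needed for the sign conclusion.
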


\begin{proof}
Let $0<b<a$, and let $\La = \{ \La_n \}_{n \ge 0}$ be 
a sequence of real numbers such that $(1-\nabla)^a \La_n \ge 0$ 
for every $n \ge 0$, and $(1-\nabla)^b \La$ is well defined.
Putting
\[
\si = b, \quad s = a, \quad r = b-a, \quad \tau=[a]+1
\]
in \eqref{eq Kuttner 2} 
(where $[a]$ denotes the biggest integer less than or equal to $a$), 
we obtain that the series
\begin{equation}\label{eq Kuttner thm3 Cesaro Sum}
(1-\nabla)^{b-a} [(1-\nabla)^a \La]_n
=
\sum_{j=0}^{\infty} k^{a-b}(j) \left( \sum_{m=0}^{\infty} k^{-a}(m) \La_{m+j+n} \right)
\end{equation}
is summable $(C,[a]+1)$ to $(1-\nabla)^b \La_n$.

Note that $k^{a-b}(m) \ge 0$ for every $m \ge 0$ 
(because $a-b > 0$) 
and the series in parenthesis in 
the right hand part of \eqref{eq Kuttner thm3 Cesaro Sum} are non-negative for every $j\ge 0$. 
We deduce that
$(1-\nabla)^b \La_n \ge 0$ for every $n \ge 0$, as we wanted to prove.
\end{proof}

\begin{proof}[Proof of Theorem~\ref{thm consequence Q1}]
Let $0<b<a$, where $b$ is not an integer, and let $T$ be an $a$-contraction
such that $T \in \weak{b}$.
Fix $x \in H$ and put $\La_n := \norm{T^n x}^2$, for $n \ge 0$.
If we show that
\begin{equation}
\label{eq La_n an contr}
(1-\nabla)^b \La_n =
\sum_{j=0}^{\infty} k^{-b}(j) \La_{j+n} =
\sum_{j=0}^{\infty} k^{-b}(j) \norm{T^{j+n}x}^2 \ge 0,
\end{equation}
for every $n \ge 0$,
since $x \in H$ is arbitrary, then
$T$ would be a $b$-contraction.
But \eqref{eq La_n an contr} follows immediately from the previous theorem,
since
$(1-\nabla)^a \La_n \ge 0$ for every $n \ge 0$ (because $T$ is an $a$-contraction)
and $(1-\nabla)^b \La$ is well defined (since $T \in \weak{b}$).
\end{proof}

We need to recall the
following asymptotic behavior of the Ces\`{a}ro numbers $k^a(n)$.

\begin{prop}\label{prop properties of Cesaro numbers}
If $a\in \C\setminus\{0,-1,-2,\dots\}$, then
\[
k^{a}(n)=\frac{\Gamma(n+a)}{\Gamma(a)\Gamma(n+1)}
={n+a-1\choose a-1}\qquad \forall n \ge 0,
\]
where $\Gamma$ is Euler's Gamma function. Therefore
\begin{equation}\label{asymp}
k^{a}(n)=\frac{n^{a-1}}{\Gamma(a)}(1+O(1/n)) \qquad \tn{ as } n\to\infty.
\end{equation}
Moreover, if $0< a \le 1$, then
\[
\frac{(n+1)^{a -1}}{\Gamma(a)}\leq k^{a}(n)\leq \frac{n^{a -1}}{\Gamma(a)} \qquad \forall n \ge 1.
\]
\end{prop}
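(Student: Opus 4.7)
The plan is to reduce all three assertions to standard facts about Euler's Gamma function. By the paper's definition, $k^{-a}(j)$ is the $j$-th Taylor coefficient at the origin of $(1-t)^{a}$; interpreting $k^{a}(n)$ analogously, it equals the $n$-th Taylor coefficient of $(1-t)^{-a}$. The generalized binomial theorem then gives
$$(1-t)^{-a} \;=\; \sum_{n=0}^{\infty}\binom{-a}{n}(-t)^{n} \;=\; \sum_{n=0}^{\infty}\binom{n+a-1}{n}\,t^{n},$$
and rewriting $\binom{n+a-1}{n}=\Gamma(n+a)/(\Gamma(a)\Gamma(n+1))$ via the standard extension of the binomial coefficient to complex upper arguments yields formula (1). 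The equality $\binom{n+a-1}{n}=\binom{n+a-1}{a-1}$ is a formal identity.

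For the asymptotic (2), I would invoke Stirling's formula in the form
$$\log\Gamma(x) \;=\; \bigl(x-\tfrac{1}{2}\bigr)\log x - x + \tfrac{1}{2}\log(2\pi) + O(1/x)\qquad (x\to\infty),$$
and apply it separately to $\Gamma(n+a)$ and $\Gamma(n+1)$. Subtracting, the $x\log x$ and linear terms cancel up to a leading contribution of $(a-1)\log n$, while the remaining pieces produce an $O(1/n)$ error. Exponentiating gives $\Gamma(n+a)/\Gamma(n+1)=n^{a-1}(1+O(1/n))$, and dividing by $\Gamma(a)$ finishes (2).

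The bounds in (3) are, after using (1), taking reciprocals, and multiplying by $\Gamma(a)$, exactly Gautschi's inequality
$$n^{\,1-a} \;\le\; \frac{\Gamma(n+1)}{\Gamma(n+a)} \;\le\; (n+1)^{\,1-a}\qquad (n\ge 1,\; 0<a\le 1),$$
a classical consequence of the logarithmic convexity of $\Gamma$ (Bohr--Mollerup). The endpoint $a=1$ is trivial since $k^{1}(n)=1$ and $\Gamma(1)=1$. If one prefers to avoid a direct appeal to Gautschi, a self-contained alternative is to use the telescoping identity
$$k^{a}(n) \;=\; \frac{1}{\Gamma(a)}\prod_{j=1}^{n}\frac{j+a-1}{j} \;=\; \frac{1}{\Gamma(a)}\prod_{j=1}^{n}\Bigl(1-\frac{1-a}{j}\Bigr),$$
and to compare this product with $n^{a-1}$ and $(n+1)^{a-1}$ by applying Bernoulli's inequality $(1+x)^{s}\ge 1+sx$ with $s=1-a\in[0,1)$ to the factor $((j-1)/j)^{1-a}$ at each step.

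The main (essentially the only) obstacle is bookkeeping: none of the three assertions requires a genuinely new idea, but one must track the error term in Stirling carefully to confirm that it is exactly of order $1/n$, and one must choose the correct form of Gautschi to recover the precise endpoints $n$ versus $n+1$ in the lower and upper bounds of (3).
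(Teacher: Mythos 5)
Your argument is correct and follows essentially the same route as the paper, which simply cites Zygmund for the Gamma-quotient formula and asymptotics and Gautschi's inequality for the two-sided bound; you have merely filled in the standard details (generalized binomial theorem, Stirling, Gautschi). One minor slip in your optional alternative: since $k^{a}(n)=\prod_{j=1}^{n}\frac{j+a-1}{j}$ already, the extra prefactor $\frac{1}{\Gamma(a)}$ in your telescoping identity is spurious, and the Bernoulli-product comparison would need to be rearranged to recover the stated bounds with the correct $\Gamma(a)$ normalization — but your primary appeal to Gautschi is sound.
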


\begin{proof}
See \cite[Vol. I, p. 77, equation (1.18)]{Zygmund} and \cite[eq. (1)]{ET}.
The last inequality follows
from the Gautschi inequality (see \cite[eq. (7)]{Gau59}).
\end{proof}

The next statement will be used in the proof of Theorem~\ref{thm consequence Q2}. 

\begin{thm}\label{thm ans to Q2}
Let $a>0$, and let the integer $m$ be defined by $m<a\le m+1$.
If $\La = \{ \La_n \}_{n \ge 0}$ is a sequence of real numbers,
then the following statements are equivalent:
\begin{enumerate}[\rm (i)]
\item
$(1-\nabla)^a \La \equiv 0$ (i.e., all the terms of the sequence $(1-\nabla)^a \La$ are $0$);
\item
$(1-\nabla)^{m+1} \La \equiv 0$;
\item
There exists a polynomial $p$ of degree at most $m$
such that $\La_n = p(n)$ for every $n \ge 0$.
\end{enumerate}
\end{thm}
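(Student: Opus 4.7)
The plan is to prove the three implications (ii) $\Leftrightarrow$ (iii), (i) $\Rightarrow$ (ii), and (iii) $\Rightarrow$ (i), with Kuttner's theorems cited above doing most of the work. First I would observe that $(1-\nabla)\La_n = \La_n - \La_{n+1} = -\Delta\La_n$, where $\Delta$ is the ordinary forward difference operator, and hence $(1-\nabla)^{m+1} = (-1)^{m+1}\Delta^{m+1}$. The equivalence (ii) $\Leftrightarrow$ (iii) is then the classical fact that $\Delta^{m+1}\La \equiv 0$ if and only if $\La$ is the restriction to $\N$ of a polynomial of degree at most $m$; one quick justification uses Newton's forward difference formula $\La_n = \sum_{k=0}^{n}\binom{n}{k}\Delta^k\La_0$, which under (ii) forces $\Delta^k\La_0=0$ for all $k\ge m+1$ and therefore truncates to a polynomial of degree $\le m$ in $n$.

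For (i) $\Rightarrow$ (ii) I would apply Kuttner's Theorem~\ref{Kuttner Thm A} with $s=a>-1$ and $r=m+1-a\ge 0$; both hypotheses hold thanks to $m<a\le m+1$. The theorem then gives
\[
(1-\nabla)^{m+1}\La_n \;=\; (1-\nabla)^{m+1-a}\bigl[(1-\nabla)^a \La\bigr]_n \;=\; 0,
\]
the right-hand side being trivially well defined and zero because we are applying the operator to the identically zero sequence $(1-\nabla)^a\La$.

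The remaining implication (iii) $\Rightarrow$ (i) splits in two cases. If $a=m+1$ there is nothing to prove since (i) and (ii) then coincide and we have already established (iii) $\Rightarrow$ (ii). Otherwise $m<a<m+1$ is non-integer, and I would apply Kuttner's Theorem~\ref{Kuttner Thm B} with $s=m+1$ and $r=a-m-1\in(-1,0)$; the conditions $s>-1$, $r+s=a>-1$, and $r+s$ non-integer are all met. This yields
\[
(1-\nabla)^{a}\La_n \;=\; (1-\nabla)^{a-m-1}\bigl[(1-\nabla)^{m+1}\La\bigr]_n \;=\; 0,
\]
since $(1-\nabla)^{m+1}\La\equiv 0$ by (iii) $\Rightarrow$ (ii). I still need to check that the left-hand side is well defined, i.e.\ that $\sum_{j\ge 0} k^{-a}(j)\,p(j+n)$ converges absolutely; but by Proposition~\ref{prop properties of Cesaro numbers} one has $|k^{-a}(j)|=O(j^{-a-1})$ while $|p(j+n)|=O(j^{m})$, so the general term is $O(j^{m-a-1})$ with exponent strictly less than $-1$, giving absolute convergence. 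The main delicate point, and essentially the only one I foresee, is precisely this bookkeeping: correctly splitting the argument according to whether $a$ is an integer, and verifying that the series defining $(1-\nabla)^a\La$ converges so that the hypotheses of Kuttner's theorems are genuinely satisfied.
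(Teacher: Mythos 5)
Your proof is correct and follows essentially the same route as the paper's: the classical equivalence (ii) $\Leftrightarrow$ (iii), Kuttner's Theorem~A with $s=a$ and $r=m+1-a\ge 0$ for (i) $\Rightarrow$ (ii), and a split into the integer case $a=m+1$ and the non-integer case, where one first checks absolute convergence of $\sum_j k^{-a}(j)\,p(j+n)$ via the asymptotics of the Ces\`aro numbers before invoking Kuttner again. You are in fact slightly more careful than the printed argument: the paper writes $r=m-a\ge 0$ (evidently a slip for $m+1-a$) and cites Theorem~A for the step (iii) $\Rightarrow$ (i), where its hypothesis $r\ge 0$ fails since $r=a-m-1<0$, so your appeal to Theorem~B at that point is the correct reading.
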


\begin{proof}
The equivalence (ii) $\Leftrightarrow$ (iii) is a well known fact
(see \cite[Theorem~2.1]{BMN10}).
Suppose that (i) is true.
Let us see that
\begin{equation}\label{eq Kuttner eq 2 apl}
(1-\nabla)^{m+1} \La_n = (1-\nabla)^{m+1-a} [(1-\nabla)^{a} \La]_n
\end{equation}
for every $n \ge 0$.
Indeed, the RHS of \eqref{eq Kuttner eq 2 apl} is obviously $0$ by assumption.
Then we can apply Theorem~\ref{Kuttner Thm A} with $s = a > -1$ and $r=m-a \ge 0$,
and \eqref{eq Kuttner eq 2 apl} follows. Therefore we obtain that (i) $\Rightarrow$ (ii).

Suppose now that (ii) is true.
Hence, we also have (iii).
Obviously, if $a = m+1$ we obtain (i). 
Now let us prove (i) for $m < a < m+1$ (so $a$ is non-integer).
We will see that
\begin{equation}\label{eq Kuttner a and a-m-1 and m+1}
(1-\nabla)^a \La_n = (1-\nabla)^{a-m-1} [(1-\nabla)^{m+1} \La]_n
\end{equation}
for every $n \ge 0$. Indeed, fix $n \ge 0$. Then, by (iii) and \eqref{asymp}, we have that
\[
\La_{j+n} \lesssim_n (j+1)^{m}.
\]
Therefore
\[
\sum_{j=0}^{\infty} \abs{k^{-a}(j)} \La_{j+n} 
\lesssim_n 
\sum_{j=0}^{\infty} (j+1)^{-a-1} (j+1)^m = \sum_{j=0}^{\infty} (j+1)^{m-a-1} 
< \infty
\]
since $m<a$. Hence, the series
\[
\sum_{j=0}^{\infty} k^{-a}(j) \La_{j+n}
\]
converges for every $n \ge 0$. 
Thus the LHS of \eqref{eq Kuttner a and a-m-1 and m+1} is well defined. 
The RHS  of \eqref{eq Kuttner a and a-m-1 and m+1} 
is obviously well defined since we are assuming (ii).
Therefore, taking $s=m+1$ and $r+s = a$ in Theorem~\ref{Kuttner Thm A} 
we obtain that indeed \eqref{eq Kuttner a and a-m-1 and m+1} holds, 
and hence we obtain (i).
\end{proof}

\begin{proof}[Proof of Theorem~\ref{thm consequence Q2}]
The equivalence between statements (ii) and (iii) is well-known.
Suppose that (i) is true. Then, fixing $h \in H$
and taking $\La_n := \norm{T^n h}^2$, note that (ii) follows immediately
by applying Theorem~\ref{thm ans to Q2}.

Suppose now that we have (ii), that is, $T$ is an $(m+1)$-isometry.
Then, $\norm{T^n}^2 \lesssim (n+1)^{m}$. Therefore
\[
\sum_{n=0}^{\infty} (n+1)^{-a-1} \norm{T^n h}^2
\lesssim
\sum_{n=0}^{\infty} n^{m-a-1},
\]
for every $h \in H$. The last series above converges since $m<a$.
This means that $T \in \weak{a}$,
and then (i) follows using Theorem~\ref{thm ans to Q2} again.
\end{proof}

\begin{rem}
\label{rem before proof}
In \cite[Proposition 8]{Ath91}, for any positive integer $m$, 
Athavale gives an example of an operator $T$ 
(a unilateral weighted shift), 
which is an $(m+1)$-isometry but not $n$-isometry for any positive integer $n \le m$.
\end{rem}

We also have the following result.

\begin{thm}\label{thm bajar a-contr a b-contr con b entero}
Let $0<c<b<a$ where $c$ is not an integer. 
If $T$ is an $a$-contraction and $(1-t)^c$ belongs to $\CA_T$, 
then $T$ is a $b$-contraction.
\end{thm}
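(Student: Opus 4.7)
The plan is to reduce to Theorem~\ref{thm consequence Q1} and to handle the case when $b$ is an integer by a continuity-in-the-exponent argument. First, an application of Proposition~\ref{prop properties of Cesaro numbers} with parameter $-\beta$ gives, for each non-integer $\beta > 0$, the asymptotic $|k^{-\beta}(n)| \asymp n^{-\beta-1}$ with a constant depending on $\beta$, so the hypothesis $(1-t)^c \in \CA_T$ propagates: for every $\beta > c$ one has $|k^{-\beta}(n)| \le C_\beta |k^{-c}(n)|$ and hence $(1-t)^\beta \in \CA_T$. If $b$ is not an integer, this is exactly the hypothesis $T \in \weak{(1-t)^b}$ of Theorem~\ref{thm consequence Q1}, which then directly gives that $T$ is a $b$-contraction.

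Now suppose $b$ is a positive integer. The plan is to approximate $b$ from below by non-integer exponents. Fix any non-integer $c_1 \in (c,b)$. For every non-integer $c' \in [c_1, b)$, Theorem~\ref{thm consequence Q1} guarantees that $T$ is a $c'$-contraction, so
\[
\sum_{n=0}^{\infty} k^{-c'}(n) \norm{T^n x}^2 \ge 0 \qquad (x \in H).
\]
The continuity of the Gamma function yields pointwise convergence $k^{-c'}(n) \to k^{-b}(n)$ as $c' \to b^-$ for every fixed $n$. If a summable envelope $\Phi(n)$ can be produced with $|k^{-c'}(n)| \le \Phi(n)$ for all $c' \in [c_1, b]$ and $\sum_n \Phi(n) \norm{T^n x}^2 < \infty$, then Lebesgue's dominated convergence theorem lets one pass to the limit and conclude $\sum_{n=0}^{\infty} k^{-b}(n) \norm{T^n x}^2 \ge 0$, which is the statement that $T$ is a $b$-contraction.

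The main obstacle will be constructing this uniform envelope, because $|\sin(\pi c')| \to 0$ as $c' \to b^-$ makes $|k^{-c'}(n)|$ depend on $c'$ in a delicate way near the integer $b$. The key tool is the reflection identity
\[
|k^{-c'}(n)|
= \frac{\Gamma(c'+1)\, |\sin(\pi c')|\, \Gamma(n-c')}{\pi\, \Gamma(n+1)}
\qquad (n > c',\ c' \notin \Z),
\]
derived from $k^{-c'}(n) = (-1)^n \binom{c'}{n}$ and $\Gamma(z)\Gamma(1-z) = \pi/\sin(\pi z)$. On the compact interval $[c_1, b]$ the prefactor $\Gamma(c'+1)|\sin(\pi c')|/\pi$ is bounded, and Stirling's formula gives $\Gamma(n-c')/\Gamma(n+1) \le C (n+1)^{-c_1-1}$ uniformly in $c' \in [c_1, b]$, while the finitely many small $n \le c'$ are handled by continuity of $c' \mapsto \binom{c'}{n}$. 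Setting $\Phi(n) := C'(n+1)^{-c_1-1}$ and invoking $|k^{-c}(n)| \asymp n^{-c-1}$ together with $c_1 > c$, the required summability $\sum_n \Phi(n) \norm{T^n x}^2 \lesssim \sum_n |k^{-c}(n)| \norm{T^n x}^2 < \infty$ follows, which completes the plan.
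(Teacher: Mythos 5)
Your proof is correct, and it diverges from the paper precisely in the only genuinely new case, namely $b$ a positive integer. For non-integer $b$, your reduction to Theorem~\ref{thm consequence Q1} is exactly the paper's own remark following the theorem. For integer $b$, the paper instead applies Kuttner's Theorem 3 a second time, now with the parameters $\si = c$, $s = a$, $r = b-a$, $\tau = [a-b+c]+1$, which satisfy \eqref{eq Kuttner 1}; since $k^{a-b}(j) \ge 0$ and the inner sums $\sum_m k^{-a}(m)\La_{m+j+n}$ are non-negative, the Ces\`aro means are non-negative and so is their limit $(1-\nabla)^b \La_n$. You avoid this second invocation of Kuttner by approximating $b$ from below by non-integers $c'$, applying Theorem~\ref{thm consequence Q1} to each, and passing to the limit $c' \to b^-$ by dominated convergence. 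The real content of your route is the uniform envelope $\abs{k^{-c'}(n)} \le C(n+1)^{-c_1-1}$ for $c' \in [c_1,b]$, which you correctly extract from the reflection formula: the prefactor $\Gamma(c'+1)\abs{\sin(\pi c')}/\pi$ is bounded (its vanishing at $c'=b$ only helps), and $\Gamma(n-c')/\Gamma(n+1)$ is controlled uniformly on compact parameter sets, with the finitely many small $n$ handled by continuity; summability of the envelope against $\norm{T^n x}^2$ then follows from $(1-t)^c \in \CA_T$ and $c_1 > c$. Both arguments are sound. The paper's is shorter once one accepts Kuttner's theorem with the full generality of its parameter conditions; yours uses Kuttner only through the already-proved Theorem~\ref{thm consequence Q1} and replaces the second application by an elementary continuity-in-the-exponent argument, at the cost of the envelope estimate. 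A minor point: when $b - c_1 > 1$ the interval $[c_1,b)$ contains integers, so (as you implicitly do) the limit should be taken along non-integer $c'$, e.g.\ $c' \in (\max\{c_1, b-1\}, b)$; this does not affect the envelope.
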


\begin{proof}
Fix $x \in H$ and let $\La_n := \norm{T^n x}^2$. Taking
\[
\si = c, \quad s = a, \quad r=b-a, \quad \tau =[a-b+c]+1
\]
in \eqref{eq Kuttner 1}, the statement follows.
\end{proof}

In fact, the only new case in this theorem is when $b$ is an integer, 
otherwise it follows from Theorem~\ref{thm consequence Q1}. 

Let us study now some properties relating weighted shifts with $a$-contractions.
Given $s>0$, recall that the space $\CH_s$ and the backward
shift $B_s$ on it have been defined in 
the Introduction, see~\eqref{eq defi space Ha}, \eqref{def-B-k} 
and a comment following these formulas.
The forward shift $F_s$ is defined on $\CH_s$ by
\begin{equation}
\label{defi Fs}
F_s f(t)= t f(t).
\end{equation}
The asymptotic behavior of the norms of the powers of $B_s$ and $F_s$ is
\begin{equation}\label{eq asymp Bs and Fs}
\norm{B_s^m}^2 \asymp (m+1)^{\max \{ 1-s, 0 \}}
\quad \tn{ and } \quad
\norm{F_s^m}^2 \asymp (m+1)^{\max \{ s-1, 0 \}}
\end{equation}
(see for example \cite[eq. (7.6)]{ABY19_1}).

\begin{thm}[{\cite[Theorem~7.2]{ABY19_1}}]
\label{thm B_s a-contraction}
Let $a$ and $s$ be positive numbers. Then:
\begin{itemize}
\item[\tn{(i)}] 
$B_s \in \weak{a}$; 
\item[\tn{(ii)}] 
$B_s$ is an $a$-contraction if and only if $a \le s$. 
\end{itemize}
\end{thm}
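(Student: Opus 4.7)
The plan is to compute the quadratic form $\langle (1-t)^a(B_s^*, B_s) f, f\rangle$ (and its absolute-value version) explicitly in the monomial basis of $\CH_s$, and to read off both statements from a single convolution identity for Ces\`aro numbers. The monomials $t^n$ are orthogonal in $\CH_s$ with $\|t^n\|^2 = k^s(n)$, and since $B_s$ simply shifts indices down, one has $\|B_s^m f\|^2 = \sum_{n \ge m} |f_n|^2 k^s(n-m)$ for $f = \sum f_n t^n$. For any non-negative sequence $\{c_m\}$, Fubini then gives
\begin{equation*}
\sum_{m \ge 0} c_m \|B_s^m f\|^2 = \sum_{n \ge 0} |f_n|^2 \sum_{m=0}^n c_m\, k^s(n-m),
\end{equation*}
which is the backbone of what follows.

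For (i), I would take $c_m = |k^{-a}(m)|$. By Proposition~\ref{prop properties of Cesaro numbers} one has $|k^{-a}(m)| \asymp m^{-a-1}$ for $m \ge 1$ (and for integer $a$ the sequence is eventually zero, making the bound trivial), while $k^s(n-m) \asymp (n-m+1)^{s-1}$. Splitting the convolution at $m = n/2$: the range $m \le n/2$ contributes $\asymp k^s(n)$ because $\sum m^{-a-1} < \infty$ and $k^s(n-m) \asymp k^s(n)$ there, while the range $m > n/2$ is bounded by $n^{-a-1} \sum_{j=0}^{n/2} k^s(j) \lesssim n^{s-a-1}$, which is $o(k^s(n))$. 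Hence $\sum_m |k^{-a}(m)|\|B_s^m f\|^2 \lesssim \sum_n |f_n|^2 k^s(n) = \|f\|^2$, and Proposition~\ref{prop equivalences for Adm alpha weak} yields $B_s \in \weak{a}$.

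For (ii), part (i) gives absolute convergence of $\sum_m k^{-a}(m) \|B_s^m f\|^2$, so I can apply the identity above with signed $c_m = k^{-a}(m)$. The key algebraic fact is the convolution identity
\begin{equation*}
\sum_{m=0}^n k^{-a}(m)\, k^s(n-m) = k^{s-a}(n),
\end{equation*}
which is immediate from $(1-t)^a \cdot (1-t)^{-s} = (1-t)^{-(s-a)}$. Therefore
\begin{equation*}
\langle (1-t)^a(B_s^*, B_s) f, f \rangle = \sum_n |f_n|^2 k^{s-a}(n),
\end{equation*}
which is non-negative for every $f \in \CH_s$ if and only if $k^{s-a}(n) \ge 0$ for every $n \ge 0$. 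By Proposition~\ref{prop properties of Cesaro numbers} the latter is equivalent to $s - a \ge 0$: if $s \ge a$ all the $k^{s-a}(n)$ are non-negative, while if $s < a$ one already has $k^{s-a}(1) = s - a < 0$, and testing on $f(t) = t$ refutes positivity. This proves (ii).

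The main obstacle I expect is the sharp convolution estimate in (i). The asymptotics $|k^{-a}(m)| \asymp m^{-a-1}$ hold only for $m \ge 1$, so small indices need to be handled by the explicit formulae in Proposition~\ref{prop properties of Cesaro numbers}, and the splitting at $m = n/2$ must be balanced carefully so that neither piece overwhelms $k^s(n) \asymp n^{s-1}$ regardless of whether $s$ lies above or below $1$.
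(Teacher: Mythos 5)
Your argument is correct. Note that the paper itself does not prove this statement; it imports it from \cite[Theorem~7.2]{ABY19_1}, so there is no internal proof to compare against. Your computation is a clean, self-contained substitute: the diagonal identity $\sum_{m\ge 0} c_m\|B_s^m f\|^2=\sum_n |f_n|^2\sum_{m=0}^n c_m k^s(n-m)$ is exactly the right reduction, the split at $m=n/2$ correctly yields $\sum_{m=0}^n |k^{-a}(m)|\,k^s(n-m)\lesssim_{a,s} k^s(n)$ (the tail contributes $O(n^{s-a-1})=O(k^s(n))$ and the head $O(k^s(n))$ since $\sum_m |k^{-a}(m)|<\infty$ for $a>0$, with the integer case trivial), and part (i) then licenses the signed Fubini in (ii), where the convolution identity $\sum_{m=0}^n k^{-a}(m)k^s(n-m)=k^{s-a}(n)$ diagonalizes the quadratic form. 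The sign analysis ($k^{s-a}(n)\ge 0$ for all $n$ iff $s\ge a$, with $k^{s-a}(1)=s-a$ as the witness when $s<a$) is exactly right. This is also consistent in spirit with how the paper handles the forward shift $F_s$ in Theorem~\ref{thm Fs a-contr}, where positivity is likewise reduced via \cite[Theorem~2.15]{ABY19_1} to the sign of an explicit Ces\`aro-type sum; your proof simply carries out the analogous reduction for $B_s$ directly in the monomial basis instead of citing it.
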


In order to obtain the corresponding result for the forward shift $F_s$,
we need to introduce the following sets:
\begin{equation}
\label{eq partition 0 infty into even and odd intervals}
J_{\tn{even}} := \bigcup_{j \in \Z_{\ge 0}} (2j,2j+1), 
\quad \quad 
J_{\tn{odd}} := \bigcup_{j \in \Z_{\ge 0}}
(2j+1,2j+2).
\end{equation}
Then $\{ J_{\tn{even}}, J_{\tn{odd}}, \N \}$ is a partition of the interval $(0, \infty)$.

\begin{thm}\label{thm Fs admiss}
Let $a$ and $s$ be positive numbers. 
Then $F_s \in \weak{a}$ if and only if $s<a+1$ or $a$ is integer.
\end{thm}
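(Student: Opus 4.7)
The plan is to use Proposition~\ref{prop equivalences for Adm alpha weak} to reduce membership in $\weak{a}$ to the pointwise condition $\sum_{n\ge 0}|k^{-a}(n)|\,\|F_s^n x\|_{\CH_s}^2 < \infty$ for every $x \in \CH_s$. Since $F_s$ is the monomial shift $t^j \mapsto t^{j+1}$ and $\|t^j\|_{\CH_s}^2 = k^s(j)$, the operator $\sum_n |k^{-a}(n)| F_s^{*n} F_s^n$ is diagonal in the orthogonal basis $\{t^j\}_{j\ge 0}$ with eigenvalues
\[
c_j := \frac{1}{k^s(j)} \sum_{n=0}^{\infty} |k^{-a}(n)|\, k^s(j+n).
\]
Hence $F_s \in \weak{a}$ if and only if $\sup_{j\ge 0} c_j < \infty$, and the whole proof reduces to estimating $c_j$.

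The integer case is disposed of first. If $a \in \N$, then $(1-t)^a$ is a polynomial of degree $a$, so $k^{-a}(n) = 0$ for $n > a$ and the series defining $c_j$ is a finite sum; membership in $\weak{a}$ follows at once for every $s > 0$. Assume from now on that $a$ is not an integer. Then Proposition~\ref{prop properties of Cesaro numbers} gives the asymptotics $|k^{-a}(n)| \asymp n^{-a-1}$ and $k^s(n) \asymp n^{s-1}$ as $n \to \infty$, with explicit pointwise bounds taking care of small indices.

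For necessity, assume $s \ge a+1$ and test with the constant function $x = 1 \in \CH_s$: then $\|F_s^n x\|^2 = k^s(n) \asymp n^{s-1}$, and
\[
\sum_n |k^{-a}(n)|\, k^s(n) \asymp \sum_n n^{s-a-2}
\]
diverges, so $F_s \notin \weak{a}$. For sufficiency, assume $s < a+1$ and split the sum defining $c_j$ (for $j \ge 1$) into $1 \le n \le j$ and $n > j$. In the first range $(j+n)^{s-1} \asymp j^{s-1}$ and $\sum_{n \le j} n^{-a-1}$ is uniformly bounded (because $a > 0$), contributing $\lesssim j^{s-1}$. In the second range $(j+n)^{s-1} \asymp n^{s-1}$, and the tail satisfies $\sum_{n>j} n^{s-a-2} \lesssim j^{s-a-1}$ precisely because $s-a-2 < -1$. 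Dividing by $k^s(j) \asymp j^{s-1}$ gives $c_j \lesssim 1 + j^{-a} \lesssim 1$, and the case $j = 0$ is handled separately using the explicit bounds from Proposition~\ref{prop properties of Cesaro numbers}.

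The principal obstacle is the second tail estimate: the hypothesis $s < a+1$ is exactly what makes $\sum_{n>j} n^{s-a-2}$ convergent, and the same exponent being on the wrong side of $-1$ is what drives the ``only if'' direction; the critical balance at $s = a+1$ (logarithmic divergence of $\sum n^{-1}$) confirms that the inequality is sharp.
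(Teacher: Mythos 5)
Your proof is correct and follows essentially the same route as the paper: both reduce membership in $\weak{a}$ to uniform boundedness of the diagonal quadratic form on the monomials, then estimate $\sum_n (n+1)^{-a-1}(n+j+1)^{s-1}$ by splitting at $n=j$ (the paper packages the diagonal reduction as a citation of a weighted-shift criterion from the companion paper, while you rederive it from Proposition~\ref{prop equivalences for Adm alpha weak}, but the substance is identical).
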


\begin{proof}
If $a$ is a positive integer, then obviously any operator in $L(H)$ belongs to $\weak{a}$,
since in this case $(1-t)^a$ is just a polynomial.
Suppose now that $a$ is not an integer.
We will use the notation of \cite[Theorem~2.15]{ABY19_1}.
Now $\ka(t) = (1-t)^{-s}$ and $\al(t) = (1-t)^a$. Therefore
\[
\be(\nabla) \ka_m
=
\sum_{n=0}^{\infty} \abs{k^{-a}(n)} \norm{F_s^n e_m} ^2
\asymp
\sum_{n=0}^{\infty} (n+1)^{-a-1} (n+m+1)^{s-1}.
\]
If $F_s \in \weak{a}$, then the above series must converge for every $m \ge 0$.
For each $m$, this series indeed behaves as $\sum n^{s-a-2}$.
Hence it implies that $s<a+1$.

Reciprocally, suppose now that $s<a+1$.
Then
\[
\begin{split}
\sum_{n=0}^{\infty} (n+1)^{-a-1} (n+m+1)^{s-1}
&\asymp
(m+1)^{s-1} \sum_{n=0}^{m} (n+1)^{-a-1} + \sum_{n=m+1}^{\infty} (n+1)^{s-a-2} \\
&\asymp
(m+1)^{s-1} + (m+1)^{s-a-1}
\asymp (m+1)^{s-1} \asymp \ka_m.
\end{split}
\]
Therefore \cite[Theorem~2.15~(i)]{ABY19_1} implies that
$F_s \in \weak{a}$.
\end{proof}

\begin{thm}\label{thm Fs a-contr}
Let $a$ and $s$ be positive numbers and $s<a+1$. Then:
\begin{itemize}
\item[\tn{(i)}]
$(1-t)^a(F_s^*, F_s)\ge 0$ (that is, $F_s$ is an $a$-contraction)
if and only if $s \in J_{\tn{even}} \cup \N$; 
\item[\tn{(ii)}]
$(1-t)^a(F_s^*, F_s)\le 0$ if and only if $s \in J_{\tn{odd}} \cup \N$; 
\item[\tn{(iii)}]
$(1-t)^a(F_s^*, F_s)=0$ (that is, $F_s$ is an $a$-isometry)
if and only if $s \in \N$.
\end{itemize}
\end{thm}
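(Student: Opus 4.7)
The plan is to diagonalize the operator $(1-t)^a(F_s^*,F_s)$ in the standard orthogonal basis $\{e_m\}_{m\ge 0}$ of $\CH_s$, where $e_m(t):=t^m$ and $\norm{e_m}_{\CH_s}^2=k^s(m)$. Since $F_s e_m = e_{m+1}$, each operator $F_s^{*n}F_s^n$ is diagonal in this basis with eigenvalue $k^s(m+n)/k^s(m)$ on $e_m$. Theorem~\ref{thm Fs admiss} ensures, under the hypothesis $s<a+1$, that the series $\sum_n k^{-a}(n)\,F_s^{*n}F_s^n$ defining $(1-t)^a(F_s^*,F_s)$ converges in SOT, so its limit is again diagonal, with eigenvalues $\lambda_m = \mu_m / k^s(m)$, where
\[
\mu_m := \sum_{n=0}^{\infty} k^{-a}(n)\, k^s(m+n).
\]
Statements (i), (ii), and (iii) of the theorem are thus equivalent, respectively, to $\mu_m\ge 0$, $\mu_m\le 0$, and $\mu_m=0$ for every $m\ge 0$.

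The heart of the proof is a closed form for $\mu_m$. Rewriting $k^{-a}(n) = (-a)_n/n!$ and $k^s(m+n)/k^s(m) = (s+m)_n/(m+1)_n$ via Pochhammer symbols, we recognize a Gauss hypergeometric series evaluated at $1$:
\[
\mu_m = k^s(m)\cdot {}_2F_1(-a,\,s+m;\,m+1;\,1).
\]
Because the parameters satisfy $(m+1)-(-a)-(s+m) = a-s+1 > 0$, Gauss's summation theorem applies and yields
\[
{}_2F_1(-a,\,s+m;\,m+1;\,1) = \frac{\Gamma(m+1)\,\Gamma(a-s+1)}{\Gamma(a+m+1)\,\Gamma(1-s)}.
\]
Substituting and using $\Gamma(s)^{-1}\Gamma(1-s)^{-1} = \sin(\pi s)/\pi$ (which remains valid at $s\in\N$, with both sides equal to zero) gives
\[
\mu_m = \frac{\sin(\pi s)}{\pi}\cdot\frac{\Gamma(a-s+1)\,\Gamma(s+m)}{\Gamma(a+m+1)}.
\]

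Since $0<s<a+1$, the fraction $\Gamma(a-s+1)\Gamma(s+m)/\Gamma(a+m+1)$ is strictly positive for every $m\ge 0$. Hence $\operatorname{sgn}(\mu_m) = \operatorname{sgn}(\sin(\pi s))$, independent of $m$. Since $\sin(\pi s)$ is positive on $J_{\tn{even}}$, negative on $J_{\tn{odd}}$, and zero on $\N$, the three conclusions (i), (ii), and (iii) follow at once. I expect the main obstacle to be the hypergeometric identification and the careful verification that Gauss's formula applies (including the borderline integer-$s$ case, where the reflection identity must be read as a limit statement); once that closed form is in place, the sign analysis is immediate.
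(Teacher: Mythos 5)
Your proof is correct, and it reaches exactly the closed form that the paper establishes, namely
\[
\sum_{n=0}^\infty k^{-a}(n)\,k^s(m+n)=\frac{\sin(\pi s)\,\Gamma(1-s+a)\,\Gamma(s+m)}{\pi\,\Gamma(m+a+1)},
\]
but by a genuinely different route. Both arguments begin with the same reduction: since $F_s^{*n}F_s^n$ is diagonal on the monomials, positivity (resp.\ negativity, vanishing) of $(1-t)^a(F_s^*,F_s)$ is equivalent to the corresponding sign condition on $(1-\nabla)^a k^s(m)$ for all $m$ (the paper quotes Theorem~2.15 of its predecessor for this; your direct diagonalization is the same thing). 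For the key identity, the paper proceeds by cases: integer $a$ is handled by a cited combinatorial identity; $s\in(0,1)$ is done via the Euler Beta integral representation of $\Gamma(1-s)\Gamma(s+m+l)/\Gamma(m+l+1)$ followed by an interchange of sum and integral; and $s>1$ is reduced inductively to $s\in(0,1]$ by summation by parts, which yields $(1-\nabla)^a k^s(m)=-(1-\nabla)^{a-1}k^{s-1}(m+1)$. You instead recognize the sum as $k^s(m)\cdot{}_2F_1(-a,\,s+m;\,m+1;\,1)$ and invoke Gauss's summation theorem, whose hypothesis $\Gamma(\gamma-\alpha-\beta)$-positivity is exactly $s<a+1$; this treats all $s\in(0,a+1)$ uniformly, absorbs the integer-$a$ case (terminating series, Chu--Vandermonde) and the integer-$s$ case (where $1/\Gamma(1-s)=0$ matches $\sin(\pi s)=0$ via the reflection formula read as an identity of entire functions) without any case split. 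What the paper's route buys is self-containedness at the level of a Beta integral plus Abel summation; what yours buys is brevity and uniformity, at the cost of citing the Gauss theorem. Your convergence check is also in order: the general term of the ${}_2F_1$ at $1$ is $O(n^{s-a-2})$, so the series converges absolutely under $s<a+1$, and the final sign analysis via $\operatorname{sgn}(\sin(\pi s))$ on $J_{\tn{even}}$, $J_{\tn{odd}}$, and $\N$ is identical to the paper's.
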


\begin{proof} By \cite[Theorem~2.15]{ABY19_1},
$(1-t)^a(F_s^*, F_s)\ge 0$ if and only if
$(1-\nabla)^a k^{s}(m)\ge 0$ for all $m\ge 0$, and similar assertions
hold in the context of (ii) and (iii). So we need to study the signs of
\[
(1-\nabla)^a k^{s}(m)=\sum_{n=0}^\infty k^{-a}(n)k^s(n+m).
\]
We assert that for any $s<a+1$,
\begin{equation}
\label{s01}
\begin{aligned}
(1-\nabla)^a k^{s}(m) = \frac{\sin(\pi s)\Gamma(1-s+a)\Gamma(s+m)}{\pi\Gamma(m+a+1)}.
\end{aligned}
\end{equation}

Suppose first that $a$ is a positive integer.
Then, by \cite[Example 3.4 (ii)]{AM18}, we have that
\[
(1-\nabla)^a k^{s}(m)=(-1)^{a}k^{s-a}(m+a),
\]
for every non-negative integer $m$,
and the statement follows easily.

Suppose that $s\in (0,1)$ and let $a$ be
any real number with $a>s-1$.
Using the expression for $k^{s}(m)$ given in
Proposition~\ref{prop properties of Cesaro numbers}
and applying
the idea of the proof of
\cite[Lemma~1.1]{AM18},
we have
\[
\begin{split}
(1-\nabla)^a k^{s}(m)
&=
\frac{1}{\Gamma(s)\Gamma(1-s)}
\sum_{l=0}^{\infty}k^{-a}(l)\frac{\Gamma(1-s)\Gamma(s+m+l)}{\Gamma(m+l+1)} \\
&=
\frac{1}{\Gamma(s)\Gamma(1-s)}
\sum_{l=0}^{\infty}k^{-a}(l)\int_{0}^1 x^{-s}(1-x)^{s+m+l-1}\,dx \\
&=
\frac{1}{\Gamma(s)\Gamma(1-s)}\int_{0}^1 x^{a-s}(1-x)^{s+m-1}\,dx
=
\frac{\Gamma(1-s+a)\Gamma(s+m)}{\Gamma(s)\Gamma(1-s)\Gamma(m+a+1)}.
\end{split}
\]
If $s=1$, it is immediate that $(1-\nabla)^a k^{s}(m)=0$.
This gives~\eqref{s01} for $s\in (0,1]$.

Next, assume that $s>1$.
The summation by parts formula gives
\[
\begin{split}
\sum_{n=0}^N k^{-a}(n)k^s(n+m)
&= k^s(m)+\sum_{n=1}^N (k^{-a+1}(n)-k^{-a+1}(n-1))k^s(n+m) \\
&= k^{-a+1}(N)k^s(N+m)+\sum_{n=0}^{N-1} (k^{s}(n+m)-k^{s}(n+m+1))k^{-a+1}(n)\\
&= k^{-a+1}(N)k^s(N+m)-\sum_{n=0}^{N-1} k^{s-1}(n+m+1)k^{-a+1}(n).
\end{split}
\]
By passing to the limit as $N\to\infty$ and using that $a>s-1$,
we obtain that
\[
(1-\nabla)^a k^{s}(m)= - (1-\nabla)^{a-1} k^{s-1}(m+1).
\]
This implies that whenever~\eqref{s01} holds for a pair $(a-1,s-1)$ (for all $m$),
it also holds for the pair $(a,s)$ and for all $m$.
Therefore, the case of an arbitrary pair $(a,s)$ reduces
to the case of the pair $(a-n,s-n)$, where $n<s\le n+1$, for which
~\eqref{s01} has been checked already. This proves this formula
for the general case. The sign of
$\sin(\pi s)$ depends on whether $s\in J_{\text{even}}$,
$s\in J_{\text{odd}}$, or $s$ is integer, whereas for $a,s>0$, all
values of $\Gamma$ in~\eqref{s01} are positive. This gives our statements.
\end{proof}

As an obvious consequence of Theorem~\ref{thm B_s a-contraction},
we obtain that it is not possible in general to pass from $b$-contractions
to $a$-contractions when $0<b<a$.

\begin{prop}
\label{thm-Cb-not-Ca}
Let $0<b<s<a$. Then $B_s$ is a $b$-contraction, but not an $a$-contraction.
\end{prop}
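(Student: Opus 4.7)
The plan is that this proposition is a direct corollary of Theorem~\ref{thm B_s a-contraction}, so I would simply unpack its content twice, once with parameter $b$ and once with parameter $a$.

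For the positive assertion, since $0<b<s$ we have $b\le s$, so part (ii) of Theorem~\ref{thm B_s a-contraction} with the pair $(b,s)$ immediately gives that $B_s$ is a $b$-contraction. To make sense of this statement, part (i) of the same theorem ensures that $B_s\in\weak{b}$, hence $(1-t)^b(B_s^*, B_s)$ is a well-defined self-adjoint operator via SOT convergence of the series in \eqref{ser-alpha}. For the negative assertion, the hypothesis $s<a$ combined with the ``only if'' direction of part (ii) of Theorem~\ref{thm B_s a-contraction} (now applied to the pair $(a,s)$) gives that $B_s$ is not an $a$-contraction; again part (i) guarantees $B_s\in\weak{a}$, so the operator $(1-t)^a(B_s^*, B_s)$ is defined in SOT and the conclusion is that it fails to be non-negative.

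There is essentially no obstacle in the argument itself: the substance is packaged entirely inside Theorem~\ref{thm B_s a-contraction}, and the role of the proposition is to underline the sharpness of Theorem~\ref{thm consequence Q1}. Namely, while Theorem~\ref{thm consequence Q1} permits one to \emph{descend} from an $a$-contraction to a $b$-contraction whenever $0<b<a$, $b\notin\N$, and the weak-admissibility condition $T\in\weak{(1-t)^b}$ is satisfied, the weighted backward shifts $B_s$ with $b<s<a$ show that one cannot in general go in the opposite direction and \emph{ascend} from a $b$-contraction to an $a$-contraction, even when all admissibility conditions are in force (indeed $B_s\in\weak{a}$ by (i) of Theorem~\ref{thm B_s a-contraction}). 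Since the entire proof reduces to quoting Theorem~\ref{thm B_s a-contraction}, I expect the write-up itself to be only a couple of lines long.
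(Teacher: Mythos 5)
Your proof is correct and is exactly the argument the paper intends: the paper gives no explicit proof, merely noting the proposition is an "obvious consequence" of Theorem~\ref{thm B_s a-contraction}, and your unpacking of part (ii) with the pairs $(b,s)$ and $(a,s)$ (together with part (i) for well-definedness) is precisely that consequence.
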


We also have the following result.

\begin{prop}
Let $0<a \le s<1$. Then $B_s$ is an $a$-contraction, which is not similar to a contraction.
\end{prop}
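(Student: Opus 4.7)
The plan is to split the two assertions. The first is immediate: since $a \le s$, Theorem~\ref{thm B_s a-contraction}(ii) gives directly that $B_s$ is an $a$-contraction, so there is nothing to prove here.

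For the second assertion, the strategy is to show that $B_s$ is not even power-bounded, which is a well-known obstruction to similarity with a contraction. Indeed, if $B_s = V C V^{-1}$ for some contraction $C$ and some invertible $V \in L(\CH_s)$, then for every integer $m \ge 0$ one has
\[
\norm{B_s^m} = \norm{V C^m V^{-1}} \le \norm{V}\,\norm{V^{-1}}\,\norm{C^m} \le \norm{V}\,\norm{V^{-1}},
\]
so $\{\norm{B_s^m}\}_{m \ge 0}$ would be bounded.

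Next I would invoke the asymptotic formula~\eqref{eq asymp Bs and Fs}, which asserts
\[
\norm{B_s^m}^2 \asymp (m+1)^{\max\{1-s,\,0\}}.
\]
Since we are in the regime $0 < s < 1$, the exponent $1-s$ is strictly positive, so $\norm{B_s^m}^2 \asymp (m+1)^{1-s} \to \infty$ as $m \to \infty$. In particular, $B_s$ is not power-bounded, contradicting the conclusion of the previous paragraph. Therefore $B_s$ cannot be similar to a contraction, completing the proof.

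There is no real obstacle here: once the two cited ingredients (Theorem~\ref{thm B_s a-contraction} for the positive part and the asymptotic~\eqref{eq asymp Bs and Fs} for the negative part) are in hand, the argument is a one-line power-boundedness contradiction. The only mild care needed is to emphasize that the hypothesis $s<1$ is what makes the exponent $1-s$ strictly positive, which is precisely what distinguishes this case from the borderline $s=1$, where $B_s$ is the classical backward shift on the Hardy space and is already a contraction.
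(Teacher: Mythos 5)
Your proposal is correct and follows exactly the paper's own argument: Theorem~\ref{thm B_s a-contraction} for the $a$-contraction part, and the asymptotics~\eqref{eq asymp Bs and Fs} with $s<1$ to rule out power-boundedness (hence similarity to a contraction). The only difference is that you spell out the standard fact that similarity preserves power-boundedness, which the paper leaves implicit.
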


\begin{proof}
Since $a \le s$, Theorem~\ref{thm B_s a-contraction} gives that $B_s$ is an
$a$-contraction, and since $s<1$, \eqref{eq asymp Bs and Fs} gives that
$B_s$ is not power bounded.
\end{proof}

Moreover, passing from $a$-contractions to $b$-contractions (when $0<b<a$)
is neither possible in general, as the following statement shows.
It is an immediate consequence of Theorems~\ref{thm Fs admiss} and \ref{thm Fs a-contr}. 

\begin{prop}
Let $1< a \le 2$ and $0<b<a$. If $\max\{ 2,b+1 \} < s < a+1$,
then $F_s$ is an $a$-contraction, but does not belong to $\weak{b}$ 
(so in particular, $F_s$ is not a $b$-contraction).
\end{prop}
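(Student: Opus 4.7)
The proof is meant to be a direct application of the two preceding theorems, so the plan is simply to verify that the numerical hypotheses on $(a,b,s)$ place $s$ in the correct ranges in each of them.

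First, I would check that $F_s$ is an $a$-contraction by invoking Theorem~\ref{thm Fs a-contr}(i). This requires two things: $s<a+1$, which is part of the hypothesis, and $s\in J_{\tn{even}}\cup\N$. Since the hypothesis also gives $s>\max\{2,b+1\}\ge 2$ and the assumption $a\le 2$ forces $s<a+1\le 3$, we obtain $s\in (2,3)$, which lies in the second component of $J_{\tn{even}}=\bigcup_{j\ge 0}(2j,2j+1)$ from~\eqref{eq partition 0 infty into even and odd intervals}. Consequently Theorem~\ref{thm Fs a-contr}(i) applies and yields $(1-t)^a(F_s^*,F_s)\ge 0$.

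Second, I would show $F_s\notin\weak{b}$ using Theorem~\ref{thm Fs admiss}. That theorem says $F_s\in\weak{b}$ iff either $s<b+1$ or $b$ is an integer. The hypothesis $s>\max\{2,b+1\}$ immediately gives $s>b+1$, so the first disjunct fails. (The non-integer case of $b$ is the one of substance; the statement is vacuous for integer $b$, because $(1-t)^b$ is then a polynomial and every operator lies in $\weak{b}$, so the contrapositive is that the interesting conclusion requires $b\notin\N$.) The final parenthetical conclusion ``$F_s$ is not a $b$-contraction'' is automatic from $F_s\notin\weak{b}$, since being a $b$-contraction presupposes SOT-convergence of $\sum|k^{-b}(n)|F_s^{*n}F_s^n$.

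There is no genuine obstacle here — the only thing to watch is the identification $s\in J_{\tn{even}}$, which depends on using both endpoints of the interval $(\max\{2,b+1\},a+1)\subseteq (2,3)$ that the hypotheses $a\le 2$ and $s>2$ provide. Once this is observed, the proposition is one line from each of the two cited theorems.
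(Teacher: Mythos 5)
Your proof is correct and is exactly the paper's argument: the paper offers nothing beyond the remark that the proposition is an immediate consequence of Theorems~\ref{thm Fs admiss} and~\ref{thm Fs a-contr}, and your verification that the hypotheses force $s\in(2,3)\subset J_{\tn{even}}$ (so Theorem~\ref{thm Fs a-contr}(i) applies) and $s>b+1$ (so Theorem~\ref{thm Fs admiss} rules out membership in $\weak{b}$) is precisely what ``immediate'' hides. Your parenthetical about integer $b$ is on target, though ``vacuous'' understates it: for $b=1$, which the hypotheses $0<b<a\le 2$ permit, the conclusion $F_s\notin\weak{b}$ is actually false since $(1-t)^1$ is a polynomial, so the proposition implicitly requires $b\notin\N$ --- a defect of the statement in the paper rather than of your proof.
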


\begin{prop}
Let $0< s <1$. If $0 < a \le \min \{ s, 1-s \}$ then $B_s$ is an $a$-contraction, 
but the series $\sum k^{-a}(n) \norm{B_s^n}^2$ does not converge. 
\end{prop}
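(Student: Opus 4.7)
The first assertion is immediate from Theorem~\ref{thm B_s a-contraction}(ii): since $a\le s$, the operator $B_s$ is an $a$-contraction. So the substance of the statement is the divergence of the series $\sum_n k^{-a}(n)\|B_s^n\|^2$.

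The plan is to combine the two asymptotics already recorded in the paper. First I would observe that because $0<s<1$ we have $\min\{s,1-s\}\le 1/2$, and hence $0<a\le 1/2<1$; in particular $a$ is not a positive integer, so Proposition~\ref{prop properties of Cesaro numbers} applied with exponent $-a$ yields
\[
|k^{-a}(n)|\asymp n^{-a-1}\qquad(n\to\infty).
\]
Moreover, for $0<a<1$ the coefficients $k^{-a}(n)$ all have the same (negative) sign for $n\ge 1$, so the convergence of $\sum_n k^{-a}(n)\|B_s^n\|^2$ is equivalent to the convergence of $\sum_n |k^{-a}(n)|\,\|B_s^n\|^2$.

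Next, since $s<1$, formula~\eqref{eq asymp Bs and Fs} gives $\|B_s^n\|^2\asymp (n+1)^{1-s}$. Multiplying these two asymptotics, the general term of the series satisfies
\[
|k^{-a}(n)|\,\|B_s^n\|^2\asymp n^{-(a+s)}.
\]
The hypothesis $a\le 1-s$ gives $a+s\le 1$, so $\sum_n n^{-(a+s)}$ diverges and therefore so does $\sum_n k^{-a}(n)\|B_s^n\|^2$. No step here is delicate; the whole argument rests on the sharp two-sided bounds already available, so I do not expect any genuine obstacle.
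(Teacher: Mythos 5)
Your proof is correct and follows essentially the same route as the paper: the $a$-contraction part is Theorem~\ref{thm B_s a-contraction}(ii), and the divergence comes from combining $|k^{-a}(n)|\asymp n^{-a-1}$ with $\|B_s^n\|^2\asymp (n+1)^{1-s}$ and the hypothesis $a+s\le 1$. Your explicit remark that the coefficients $k^{-a}(n)$ have constant sign for $n\ge 1$ (so that divergence of the absolute series gives divergence of the series itself) is a small point the paper leaves implicit, but it is the same argument.
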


\begin{proof}
Since $a \le s$, 
Theorem~\ref{thm B_s a-contraction} gives that $B_s$ is an $a$-contraction. 
Moreover, using that $a \le 1-s$ and \eqref{eq asymp Bs and Fs}, 
it is immediate that
\[
\sum_{n=0}^{\infty} |k^{-a}(n)| \norm{B_s^n}^2 = \infty,
\]
and the statement follows.
\end{proof}

\begin{thm}\label{thm even contr to odd contr and vice versa}
Let $m$ be a positive integer.
\begin{enumerate}[\rm (i)]
\item
If $T$ is a $(2m+1)$-contraction, then $T$ is a $2m$-contraction and
\[
\norm{T^n x}^2 \lesssim (n+1)^{2m} \quad \quad (\forall x \in H).
\]
\item
If $T$ is a $(2m)$-contraction and
\[
\norm{T^n x}^2 = o(n^{2m-1}) \quad \quad (\forall x \in H),
\]
then $T$ is a $(2m-1)$-contraction.
\end{enumerate}
\end{thm}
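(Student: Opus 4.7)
Fix $x\in H$ and put $\Lambda_n := \|T^n x\|^2 \ge 0$. Because $(1-t)^k$ is a polynomial for integer $k$, the SOT-convergence of $\sum k^{-k}(j)T^{*j}T^j$ is automatic, and a direct binomial expansion yields
\[
(1-\nabla)^k \Lambda_n \;=\; (-1)^k\,\Delta^k\Lambda_n,\qquad k\in\N,
\]
where $\Delta f(n):=f(n+1)-f(n)$. Hence ``$T$ is a $k$-contraction'' is equivalent to $(-1)^k\Delta^k\Lambda_n\ge 0$ for every $x\in H$ and every $n\ge 0$. The proof amounts to extracting one-sided bounds on lower-order differences by iterating the telescoping identity $\sum_{k=n_0}^{n-1}\Delta f(k)=f(n)-f(n_0)$, and then reaching a contradiction with the non-negativity (resp.\ the $o(n^{2m-1})$ growth) of $\Lambda_n$.

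\emph{Part (i).} Set $\mu_n := \Delta^{2m}\Lambda_n$. The $(2m{+}1)$-contraction hypothesis says $\Delta\mu_n = \Delta^{2m+1}\Lambda_n\le 0$, so $\{\mu_n\}$ is non-increasing. Suppose, for contradiction, that $\mu_{n_0}<0$ for some $n_0$; then $\mu_n\le-c<0$ for all $n\ge n_0$. Applying the telescoping identity $j$ times produces, for $j=1,\dots,2m$, an estimate of the shape
\[
\Delta^{2m-j}\Lambda_n \;\le\; -c\,\binom{n-n_0}{j} + p_j(n)\qquad (n\ge n_0),
\]
with $p_j$ a polynomial of degree strictly less than $j$; at $j=2m$ this gives $\Lambda_n\to-\infty$, contradicting $\Lambda_n\ge 0$. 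Hence $\mu_n\ge 0$ for every $n$, i.e., $T$ is a $2m$-contraction. For the growth estimate we now have $0\le \mu_n\le \mu_0$, and the very same iterated summation, with this two-sided bound, yields by downward induction on $j$ that $|\Delta^{2m-j}\Lambda_n|\le C_x(n+1)^j$ for $j=1,\dots,2m$; taking $j=2m$ gives $\|T^n x\|^2\lesssim (n+1)^{2m}$.

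\emph{Part (ii).} Set $\nu_n := \Delta^{2m-1}\Lambda_n$. The $2m$-contraction hypothesis gives $\Delta\nu_n = \Delta^{2m}\Lambda_n\ge 0$, so $\{\nu_n\}$ is non-decreasing. Suppose, for contradiction, that $\nu_{n_0}>0$; then $\nu_n\ge c>0$ for every $n\ge n_0$, and the analogous iterated summations (with a positive lower bound instead of a negative upper bound) produce
\[
\Delta^{2m-1-j}\Lambda_n \;\ge\; c\,\binom{n-n_0}{j} + q_j(n)\qquad (n\ge n_0),
\]
with $\deg q_j<j$. At $j=2m-1$ this forces $\Lambda_n\gtrsim n^{2m-1}$, contradicting the hypothesis $\|T^n x\|^2 = o(n^{2m-1})$. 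Therefore $\nu_n\le 0$ for every $n$, i.e., $(-1)^{2m-1}\Delta^{2m-1}\Lambda_n\ge 0$, which says precisely that $T$ is a $(2m{-}1)$-contraction.

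The only delicate point is the sign and remainder bookkeeping in the iterated summations; no deeper ingredient (in particular, neither Kuttner's theorems nor functional-model machinery) enters, because all exponents involved are integers and $(1-t)^k$ is a polynomial, so SOT-convergence is for free.
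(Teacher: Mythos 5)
Your proof is correct and follows essentially the same route as the paper: both arguments fix $x$, set $\Lambda_n=\|T^nx\|^2$, and derive a contradiction with $\Lambda_n\ge 0$ (resp.\ with $\Lambda_n=o(n^{2m-1})$) from a wrong sign of the relevant finite difference. The only cosmetic difference is that the paper packages the iterated telescoping into an explicit discrete Taylor formula with remainder, whereas you carry out the summations step by step after first observing the monotonicity of $\Delta^{2m}\Lambda$ (resp.\ $\Delta^{2m-1}\Lambda$).
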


\begin{rems}
\quad
\begin{enumerate}
\item[(a)]
The fact that $(2m+1)$-contractions are $2m$-contractions was already proved
by Gu in \cite[Theorem 2.5]{Gu15}. Here we give an alternative proof, which also works for (ii).
\item[(b)]
In general, $2m$-contractions are not $(2m-1)$-contractions.
For example,
the forward weighted shift $F_2$ is a $2$-isometry (see Theorem~\ref{thm Fs a-contr} (iii)),
but it is not a contraction. Indeed, it is not power bounded (see \eqref{eq asymp Bs and Fs}).
\end{enumerate}
\end{rems}

\begin{proof}[Proof of Theorem~\ref{thm even contr to odd contr and vice versa}]
Fix $x\in H$ and put $\La_n := \norm{T^n x}^2$, for every $n \ge 0$.
It is easy to see (for instance, by induction on $k$) that
\begin{equation}
\label{eq Taylor finite differ}
\begin{split}
\La_n
&=
\sum_{j=0}^{k-1} (-1)^j {n \choose j} (I-\nabla)^j \La_0 \,
+ \, (-1)^k \sum_{j=0}^{n-k} {n-1-j \choose k-1}  (I-\nabla)^k \La_j \\
&=:
(I) + (II),
\end{split}
\end{equation}
for $n \ge k$.
(Note that this formula is a discrete analogue of the Taylor formula with
the rest in the integral form.)

Let us prove (i).  Suppose that $T$ is a $(2m+1)$-contraction.
Taking $k = 2m+1$ (and $n$ sufficiently large) in \eqref{eq Taylor finite differ},
we obtain that $(II) \le 0$, since $(I-\nabla)^{2m+1} \La_j \ge 0$. Therefore
\begin{equation}
\label{eq Taylor finite differ odd}
\La_n \le \La_0 - {n \choose 1} (I-\nabla) \La_0 + \cdots + {n \choose 2m} (I-\nabla)^{2m} \La_0.
\end{equation}
If $(I-\nabla)^{2m} \La_0 < 0$, then the RHS of \eqref{eq Taylor finite differ odd}
is a polynomial in $n$ of degree $2m$ whose main coefficient is negative.
Hence, it is negative for $n$ sufficiently large.
This contradicts the fact that $\La_n \ge 0$ for every $n$.
Therefore, $(I-\nabla)^{2m} \La_0 \ge 0$. Since the vector $x \in H$, fixed at the beginning
of the proof, was arbitrary, this means that $T$ is a $2m$-contraction.
We have also obtained that $\La_n \lesssim (n+1)^{2m}$.
This completes the proof of (i).

Assume the hypotheses of (ii).
Now taking $k = 2m$ (and $n$ sufficiently large) in \eqref{eq Taylor finite differ},
we obtain that $(II) \ge 0$, since $(I-\nabla)^{2m} \La_j \ge 0$. Therefore
\begin{equation}
\label{eq Taylor finite differ even}
\La_n \ge \La_0 - {n \choose 1} (I-\nabla) \La_0 
+ \cdots - {n \choose 2m-1} (I-\nabla)^{2m-1} \La_0.
\end{equation}
If $(I-\nabla)^{2m-1} \La_0 < 0$, then the RHS of \eqref{eq Taylor finite differ odd}
is a polynomial in $n$ of degree $2m-1$ whose main coefficient is positive.
But this contradicts the hypothesis $\La_n = o(n^{2m-1})$, hence it must be
$(I-\nabla)^{2m-1} \La_0 \ge 0$.
Since the vector $x \in H$, fixed at the beginning
of the proof, was arbitrary, this means that $T$ is a $(2m-1)$-contraction.
\end{proof}


\section{Remarks on the models for $a$-contractions with $a>1$}
\label{Section a bigger than 1}

In Theorem~3.51 of his thesis \cite{Schillo-tesis}, 
Schillo proves that a commutative 
operator tuple belongs to certain classes if and only if it can be modeled 
as a compression of the tuple of multiplication operators 
by coordinates on some natural Bergman-type spaces of the unit ball.  
Specialized to the case of one operator, this result implies 
that for $a \ge 1$, the following statements are equivalent: 
\begin{enumerate}[\rm (1)]
\item
$T$ is a contraction and an $a$-contraction;
\item 
there exists a separable Hilbert space $\CE$ such that
$T$ is unitarily equivalent to a part of an operator $(B_a \otimes I_\CE)\oplus U$,
where $U$ is an unitary operator. 
\end{enumerate}

Here we discuss the models of $a$-contractions, without extra assumptions. 
In view of Theorem~\ref{thm model a in 01},
which gives a model for $a$-contractions when $0<a<1$,
it is natural to ask whether for $a>1$, 
the statements
\begin{enumerate}[\rm (a)]
\item
$T$ is an $a$-contraction,
\item
there exists a separable Hilbert space $\CE$ such that
$T$ is unitarily equivalent to a part of an operator $(B_a \otimes I_\CE)\oplus S$,
where $S$ is an $m$-isometry,
\end{enumerate}
are equivalent (here $m$ is the integer defined by $m-1 < a \le m$).

It turns out that one implication is true, but the other is false in general.

\begin{thm}
\label{thm model a>1 true implication}
Let $a>1$ and let $m$ be the positive integer such that $m-1 < a \le m$.
Then any part of $(B_a \otimes I_\CE)\oplus S$, where
$S$ is an $m$-isometry and $\CE$ is an auxiliary Hilbert space, is
an $a$-contraction.
\end{thm}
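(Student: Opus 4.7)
The plan is to reduce the statement to three separate facts: that $B_a \otimes I_\CE$ is an $a$-contraction, that $S$ is an $a$-contraction, and that being an $a$-contraction is preserved under direct sums and under passage to invariant subspaces.

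The first two are the substantive ingredients. For $B_a \otimes I_\CE$, Theorem~\ref{thm B_s a-contraction} applied with $s=a$ gives $B_a \in \weak{a}$ and $(1-t)^a(B_a^*,B_a)\ge 0$. Tensoring with $I_\CE$ should preserve both properties: by Proposition~\ref{prop fnTBT SOT}, the partial sums of $\sum_n k^{-a}(n)B_a^{*n}B_a^n$ converge in SOT as a difference of two SOT-convergent monotone sequences, and SOT convergence survives tensoring with a fixed identity operator, yielding $(1-t)^a((B_a \otimes I_\CE)^*, B_a \otimes I_\CE) = (1-t)^a(B_a^*, B_a) \otimes I_\CE \ge 0$. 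For $S$, the key observation is that, since $m-1 < a \le m$, Theorem~\ref{thm consequence Q2} applied with the integer $m-1$ in place of its $m$ (so that the hypothesis $m-1 < a \le m$ plays the role of $m_0 < a \le m_0+1$) shows that being an $m$-isometry is equivalent to being an $a$-isometry. Thus $S$ is in fact an $a$-isometry, and in particular an $a$-contraction; the admissibility $S \in \weak{a}$ follows automatically from the polynomial growth $\|S^n h\|^2 = O(n^{m-1})$ intrinsic to $m$-isometries together with the asymptotic $|k^{-a}(n)| \asymp n^{-a-1}$ from Proposition~\ref{prop properties of Cesaro numbers}, since $(m-1)-(a+1)<-1$.

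The closure properties are routine. For direct sums, the identity $(T_1 \oplus T_2)^n = T_1^n \oplus T_2^n$ gives $\sum_n k^{-a}(n)\|(T_1 \oplus T_2)^n(x_1,x_2)\|^2 = \sum_n k^{-a}(n)\|T_1^n x_1\|^2 + \sum_n k^{-a}(n)\|T_2^n x_2\|^2$, and likewise for the absolute-value version, so both admissibility and non-negativity pass to the direct sum. For a part $T|_M$ on a $T$-invariant subspace $M\subset H$, the identity $(T|_M)^n x = T^n x$ for $x\in M$ makes the transfer immediate at the level of quadratic forms. I do not anticipate a serious obstacle; the main point requiring care is correctly matching the index conventions when applying Theorem~\ref{thm consequence Q2} with $m-1$ playing the role of its internal integer, and verifying that the SOT convergence in Proposition~\ref{prop fnTBT SOT} is indeed stable under tensoring with $I_\CE$ on a possibly infinite-dimensional Hilbert space $\CE$.
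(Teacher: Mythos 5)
Your proposal is correct and follows essentially the same route as the paper: the paper likewise combines Theorem~\ref{thm B_s a-contraction} (with $s=a$) for $B_a$, Theorem~\ref{thm consequence Q2} (with its internal integer equal to $m-1$) for $S$, and closure of $a$-contractions under tensoring with $I_\CE$, direct sums, and restriction to invariant subspaces. The only difference is that the paper invokes these closure properties as a cited lemma (Lemma~\ref{lemma direct sum a-contr}, from \cite[Proposition~2.6]{ABY19\_1}), whereas you verify them directly; your verifications, including the admissibility of $S$ via the polynomial growth of $\norm{S^n h}^2$, are sound.
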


For the proof of this theorem we reproduce the 
following lemma from \cite{ABY19_1}.

\begin{lemma}[{\cite[Proposition 2.6]{ABY19_1}}]
\label{lemma direct sum a-contr}
Let $a>0$.
\begin{enumerate}[\rm (i)]
\item
If $T$ is an $a$-contraction, then any part of $T$ is also an $a$-contraction.
\item
If $T_1$ and $T_2$ are $a$-contractions,
then $T_1 \oplus T_2$ is also an $a$-contraction.
\item
If $T$ is an $a$-contraction, then $T \otimes I_\CE$
(where $I_\CE$ is the identity operator on some Hilbert space $\CE$)
is also an $a$-contraction.
\end{enumerate}
\end{lemma}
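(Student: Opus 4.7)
The plan is to prove each of the three parts of Lemma~\ref{lemma direct sum a-contr} directly from the definition, exploiting Proposition~\ref{prop equivalences for Adm alpha weak}. Throughout, being an $a$-contraction amounts to two things: membership in $\weak{(1-t)^a}$, and the nonnegativity of the quadratic form
\[
\ip{(1-t)^a(T^*,T)x}{x} = \sum_{n=0}^{\infty} k^{-a}(n) \norm{T^n x}^2,
\]
which makes sense by Proposition~\ref{prop fnTBT SOT}.

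For (i), let $M \ss H$ be a closed $T$-invariant subspace, so $T|_M$ is a part of $T$. Since $T^n x = (T|_M)^n x \in M$ for every $x \in M$, the equalities $\norm{(T|_M)^n x} = \norm{T^n x}$ hold. Combined with Proposition~\ref{prop equivalences for Adm alpha weak}(iii), this yields $(1-t)^a \in \CA_{T|_M}$, and term-by-term the quadratic form for $T|_M$ coincides with that for $T$ on vectors of $M$, hence remains $\ge 0$.

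For (ii), for $x = x_1 \oplus x_2 \in H_1 \oplus H_2$, the identity
\[
\norm{(T_1 \oplus T_2)^n x}^2 = \norm{T_1^n x_1}^2 + \norm{T_2^n x_2}^2
\]
splits the relevant series into two pieces, each of which is summable by hypothesis; adding the two nonnegative quadratic forms gives a nonnegative form. Part (iii) proceeds similarly but requires an interchange of summations: fix an orthonormal basis $\{e_j\}$ of $\CE$ and write $z \in H \otimes \CE$ as $z = \sum_j x_j \otimes e_j$ with $\sum_j \norm{x_j}^2 = \norm{z}^2$. Then $\norm{(T \otimes I_\CE)^n z}^2 = \sum_j \norm{T^n x_j}^2$, and by Proposition~\ref{prop equivalences for Adm alpha weak}(iii) we have
\[
\sum_{j} \sum_{n=0}^{\infty} |k^{-a}(n)| \norm{T^n x_j}^2 \lesssim \sum_j \norm{x_j}^2 = \norm{z}^2.
\]
This absolute bound simultaneously shows $(1-t)^a \in \CA_{T \otimes I_\CE}$ and legitimizes Fubini for double series, giving
\[
\sum_{n=0}^{\infty} k^{-a}(n) \norm{(T \otimes I_\CE)^n z}^2 = \sum_j \ip{(1-t)^a(T^*,T) x_j}{x_j} \ge 0.
\]

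The only step with any genuine content is the Fubini swap in (iii); parts (i) and (ii) are essentially bookkeeping once one systematically uses the equivalences in Proposition~\ref{prop equivalences for Adm alpha weak}, and no new ingredient is needed beyond what has already been established earlier in the paper.
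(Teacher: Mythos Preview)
Your proof is correct. Note, however, that this paper does not actually prove the lemma: it is quoted verbatim from \cite[Proposition~2.6]{ABY19_1} and used as a black box in the proof of Theorem~\ref{thm model a>1 true implication}. So there is no ``paper's own proof'' here to compare against beyond that citation.

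That said, your argument is exactly the natural one and matches what the cited reference does: reduce everything to the scalar identities $\norm{(T|_M)^n x}=\norm{T^n x}$, $\norm{(T_1\oplus T_2)^n(x_1\oplus x_2)}^2=\norm{T_1^n x_1}^2+\norm{T_2^n x_2}^2$, and $\norm{(T\otimes I_\CE)^n z}^2=\sum_j\norm{T^n x_j}^2$, then invoke Proposition~\ref{prop equivalences for Adm alpha weak}(iii) both to verify membership in $\weak{(1-t)^a}$ and to justify the Fubini interchange in~(iii). One small point you leave implicit: the spectral hypothesis $\si(\cdot)\ss\ol{\D}$ underlying the definition of $\CA_T$ is inherited in all three constructions, since $\norm{(T|_M)^n}\le\norm{T^n}$, $\si(T_1\oplus T_2)=\si(T_1)\cup\si(T_2)$, and $\norm{(T\otimes I_\CE)^n}=\norm{T^n}$. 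With that remark, nothing is missing.
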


\begin{proof}[Proof of Theorem~\ref{thm model a>1 true implication}]
By Theorem~\ref{thm B_s a-contraction}, $B_a$ is an $a$-contraction,
and, by Theorem~\ref{thm consequence Q2},
$S$ is also an $a$-contraction.
Therefore the previous lemma implies that
$(B_a \otimes I_\CE)\oplus S$ is an $a$-contraction.
\end{proof}

\begin{prop}
\label{prop a>1 counterexample}
Let $a$ belong to the set
\[
A:= \bigcup_{j\ge 1} \, (2j-1,2j] \ss \R.
\]
Let $m$ be the positive integer such that $m-1 < a \le m$,
and take $s \in (m,a+1)$ (such $s$ exists).
Then the forward weighted shift $F_s$ is an $a$-contraction that
cannot be modeled by a part of $(B_a \otimes I_\CE)\oplus S$,
where $S$ is an $m$-isometry.
\end{prop}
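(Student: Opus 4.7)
The plan is to verify that $F_s$ is an $a$-contraction using Section~\ref{Section Inclusions}, and then to derive a contradiction from any supposed model by comparing the asymptotic growth of $\norm{F_s^n e_0}^2$, where $e_0(t)\equiv 1 \in \CH_s$, with the growth allowed by any part of $(B_a \otimes I_\CE) \oplus S$.

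For the affirmative part, I would note that $a \in (2j-1,2j]$ forces $m = 2j$ to be even and the interval $(m, a+1)$ to be non-empty, so $s$ exists and any choice lies in $(2j, 2j+1) \subseteq J_{\tn{even}}$. The admissibility $F_s \in \weak{a}$ then follows from Theorem~\ref{thm Fs admiss} (either $a$ is a positive integer, or $s < a+1$ as required), and Theorem~\ref{thm Fs a-contr}~(i) gives that $F_s$ is an $a$-contraction.

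For the negative direction, suppose for contradiction that there is a unitary $U \colon \CH_s \to M$ onto an invariant subspace of $A := (B_a \otimes I_\CE) \oplus S$ intertwining $F_s$ with $A|_M$. Writing $Ue_0 = (w,h)$ according to the direct sum decomposition, the intertwining relation gives
\[
\norm{F_s^n e_0}^2 = \norm{(B_a^n \otimes I_\CE) w}^2 + \norm{S^n h}^2.
\]
Since $a \ge 1$, \eqref{eq asymp Bs and Fs} shows that $B_a$ is power bounded, so the first summand is bounded uniformly in $n$. Since $S$ is an $m$-isometry with $m$ a positive integer, applying Theorem~\ref{thm consequence Q2} (whose polynomial degree parameter becomes $m-1$) yields that $\norm{S^n h}^2$ is a polynomial in $n$ of degree at most $m-1$. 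Therefore $\norm{F_s^n e_0}^2 \lesssim_{w,h} (n+1)^{m-1}$.

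On the other hand, the very definition of $\CH_s$ gives $\norm{F_s^n e_0}^2 = \norm{t^n}_{\CH_s}^2 = k^{s}(n)$, which by Proposition~\ref{prop properties of Cesaro numbers} is $\asymp n^{s-1}$. Since $s > m$ forces $s-1 > m-1$, this produces the desired contradiction. The one point that needs care is to arrange simultaneously that $s \in J_{\tn{even}}$ (so that $F_s$ is indeed an $a$-contraction) and $s > m$ (so that the growth rate $n^{s-1}$ beats $n^{m-1}$); both are precisely what the hypothesis $a \in \bigcup_{j\ge 1}(2j-1,2j]$ secures, keeping $m$ even and allowing $s$ just above $m$ while remaining below $a+1$.
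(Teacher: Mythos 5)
Your proof is correct and follows essentially the same route as the paper: establish that $F_s$ is an $a$-contraction via Theorem~\ref{thm Fs a-contr}~(i), then rule out the model by comparing the growth $\norm{F_s^n}^2 \asymp (n+1)^{s-1}$ against the bound $(n+1)^{m-1}$ coming from the power-boundedness of $B_a$ and the polynomial growth of an $m$-isometry. The only cosmetic difference is that you test the growth on the single vector $e_0$ (using $\norm{t^n}_{\CH_s}^2 = k^s(n)$) whereas the paper compares operator norms directly; both are valid and equivalent here.
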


\begin{proof}
Assume all the hypothesis of the statement.
Since $s<a+1$ and $s \in J_{\tn{even}}$, Theorem~\ref{thm Fs a-contr}~(i)
gives that $F_s$ is an $a$-contraction.
The second part of the statement follows by comparison of
operator norms.
By \eqref{eq asymp Bs and Fs},
$\norm{F_s^n}^2 \asymp (n+1)^{s-1}$, for every $n\ge 0$.
On the other hand, $B_a$ is a contraction,
and $\norm{S^n}^2 \lesssim (n+1)^{m-1}$
since $S$ is an $m$-isometry.
(This last asymptotics is well-known. For instance, it follows immediately
from \cite[Theorem~2.1]{BMN10}.)
Therefore
\[
\norm{((B_a \otimes I_\CE)\oplus S)^n}^2
\lesssim
(n+1)^{m-1}.
\]
Since $m-1 < s-1$, we get that
$F_s$ cannot be modeled by a part of $(B_a \otimes I_\CE)\oplus S$. 
\end{proof}

\begin{rem}
It remains open whether (a) implies (b) when $a>1$ belongs to $\R \sm A$.
\end{rem}


\section*{Acknowledgments}
The first author has been partly supported by Project MTM2016-77710-P,
DGI-FEDER, of the MCYTS, Project E26-17R, D.G. Arag\'on, 
and Project for Young Researchers, Fundación Ibercaja and Universidad de Zaragoza, Spain.
The second author has been partially supported by
La Caixa-Severo Ochoa grant
(ICMAT Severo Ochoa project SEV-2011-0087, MINECO).
Both second and third authors acknowledge partial support by
Spanish Ministry of Science, Innovation and
Universities (grant no. PGC2018-099124-B-I00) and
the ICMAT Severo Ochoa project SEV-2015-0554 of the Spanish Ministry of Economy and
Competitiveness of Spain and the European Regional Development
Fund, through the ``Severo Ochoa Programme for Centres of Excellence
in R$\&$D''.

\bibliographystyle{siam}
\bibliography{biblio_ABY}

\end{document}